\newtheorem{theorem}{Theorem}
\newtheorem{proposition}{Proposition}
\newtheorem{corollary}{Corollary}
\newtheorem{lemma}{Lemma}
\newtheorem{remark}{Remark}
\newtheorem{assumption}{Assumption}
\newcommand{\E}{\mathbb{E}}
\newcommand{\X}{\mathbb{X}}
\newcommand{\be}{\begin{equation}}
\newcommand{\ee}{\end{equation}}
\newcommand{\bs}[1]{\boldsymbol{#1}}
\newcommand{\Real}{{\mathbb{R}}}
\title[Variance Reduction for Independent Metropolis]{Variance Reduction for the Independent Metropolis Sampler}
\author[]{Siran Liu}
\address{
 Department of Statistical Science, University College London, UK. email: siran.liu.21@ucl.ac.uk
}
\author[]{
 Petros Dellaportas 
}
\address{
 Department of Statistical Science, University College London, UK. email: p.dellaportas@ucl.ac.uk\\
           Department of Statistics,
           Athens University of Economics and Business, Greece. email: petros@aueb.gr
}
\author[]{Michalis K. Titsias}
\address{
Google DeepMind,  UK. email: mtitsias@google.com
}
\begin{document}
\begin{abstract}
    Assume that we would like to estimate the expected value of a function $F$ with respect to an intractable  density $\pi$, which is specified up to some unknown normalising constant.   We prove that if $\pi$ is close enough under KL divergence to another density $q$, an independent Metropolis sampler estimator that obtains samples from $\pi$ with proposal density $q$, enriched with a variance reduction computational strategy based on control variates, achieves smaller asymptotic variance than i.i.d.\ sampling from $\pi$. The control variates construction requires no extra computational effort but assumes that the expected value of $F$ under $q$ is analytically available. We illustrate this  result by calculating the marginal likelihood in a linear regression model with  prior-likelihood conflict and a non-conjugate prior. Furthermore, we propose an adaptive independent Metropolis algorithm that adapts the proposal density such that its KL divergence with the target is being reduced.    We demonstrate its applicability in a Bayesian logistic and Gaussian process regression problems and we rigorously justify our asymptotic arguments under  easily verifiable and essentially minimal conditions.
\end{abstract}
\keywords{Markov chain Monte Carlo, independent Metropolis, variance reduction, control variate, Poisson equation.}
\section{Introduction}\label{sec:int}

\subsection{The general problem}
 We are given a probability distribution $\pi(x) = \frac{1}{\mathcal{Z}} \exp\{f(x)\}$ on some state space $\X \subset \Real ^d$ where the normalising constant
 $\mathcal{Z}$ is intractable, and  we want to estimate the expectation of a function $F: \X \rightarrow \Real$ with respect to $\pi(\cdot)$ given by
\be
\E_{\pi} [F] = \int_\X F(x) \pi(x) dx.
\label{Expectation}
\ee
A Monte Carlo estimation of this expectation simulates, typically using MCMC, a set of dependent random variables $X_1,X_2,\ldots,X_n \sim \pi$ and then produces the unbiased estimator
\begin{equation}
\label{cmcest}
        \mu_{n,MC} = \frac{1}{n}\sum_{i=1}^n F(X_i)
\end{equation}
with variance estimator $\sigma^2_{n,MC}$. The optimal Monte Carlo 
estimator $\mu_{n,MC^*}$ of the form \eqref{cmcest}, that achieves the smallest variance $\sigma^2_{n,MC^*}$, 
is obtained when $X_1,X_2,\ldots,X_n$ are i.i.d.\ 
from $\pi(\cdot)$ so that it holds 
\be
n\sigma^2_{n,MC} \geq n \sigma^2_{n,MC^*} = \E_{\pi} [(F(x)- \E_{\pi}[F])^2] = \sigma_F^2,
\label{true_var}
\ee
where $\sigma_F^2$ denotes the true variance of $F$. Often  
the variance $\sigma^2_{n,MC}$, 
or its lower bound  $\sigma^2_{n,MC^*}$, 
is very large and variance reduction techniques are needed. 
One general method for variance reduction can be based on control variates \cite{owen2013monte}: if there exist functions (control variates)  $\{U_i\}_{i=1}^k$ such that $E_\pi[U_i]$ are analytically available, then the estimator $\mu_{n,MC} + \sum_{i=1}^k c_i (U_i - E_\pi[U_i])$ is unbiased and for appropriate values of the scalars $c_i$ has no larger variance than that of $\mu_{n,MC}$.
However, since $\pi$ has an intractable normalising constant,  
computing $\E_\pi[U_i]$ is as difficult as the original expectation $\E_\pi[F]$. This means 
that in order to apply control 
variates for improving the Monte Carlo estimator in 
\eqref{cmcest} we need to avoid computing intractable expectations $\E_\pi[U_i]$ of control variates under the target. The key innovation of this paper is to achieve this through the use of the independent Metropolis sampler.

An independent Metropolis estimator can be derived by constructing a Markov chain on $\X$ which has $\pi(\cdot)$ as a stationary distribution by letting a density $q: \X \rightarrow \Real$ be the transition function, or proposal density, of the chain. The dynamics of the independent Metropolis algorithm require to move from any state $x \in \X$ by proposing a new location $y\in \X$ from $q$  and accept it with probability 
\be
\label{alpha}
\alpha(x,y) :=   
\begin{cases}  \min \left(1,\frac{\exp\{f(y)\}q(x)}{\exp\{f(x)\}q(y)} \right), & \exp\{f(x)\} q(y) > 0, \\
1, & \exp\{f(x)\}q(y) = 0.
\end{cases}
\ee
By collecting, at stationarity, dependent samples $X_1,X_2,\ldots,X_n$ from the Markov chain, we construct the ergodic estimator 
\begin{equation}
\label{mcmcest}
        \mu_{n,IM} = \frac{1}{n}\sum_{i=1}^n F(X_i)
\end{equation}
which satisfies, under appropriate conditions (see section 2), a central limit theorem of the form
\begin{equation}
\nonumber
\sqrt n ( \mu_{n,IM} - \E_\pi[F] ) = n^{-1/2} \sum_{i=1}^{n} 
( F(X_i) - \E_\pi[F] )
\overset{D}{\to} N(0,\sigma^2_{IM})
\end{equation}
with the asymptotic variance given by 
\begin{equation}
\label{sigmaIM}
\sigma^2_{IM} := \lim_{n \to \infty} n \E_\pi \left[ \left( \mu_{n,IM} - \E_\pi[F]\right)^2 \right].
\end{equation}

\subsection{Outline of the method}

Since $\mu_{n,IM}$ is based on 
dependent samples, it is clearly an instance 
of the estimator $\mu_{n,MC}$ 
from \eqref{cmcest}, and therefore
it holds $\sigma^2_{n,IM} \geq 
\sigma^2_{n,MC^*}$ where recall that $\sigma^2_{n,MC^*}$ denotes
the variance of i.i.d.\ sampling. 
Indeed $\sigma^2_{IM} = \sigma^2_{n,MC^*}$ only when $q$ equals  $\pi$. Our key idea  is that we can exploit the Markovian structure of the MCMC samples to build, with negligible extra computational cost, new estimators with variance smaller than $\sigma^2_{n,MC^*}$.  We derive these new estimators based on approximate solutions to the Poisson equation of the Markov chain that allow the construction of control variates for $\mu_{n,IM}$. 
There are two requirements for these estimators to achieve smaller variance than 
$\mu_{n,MC^*}$. The first is the ability to obtain analytically $\E_q[F]$ or $\E_q[{\tilde F}]$ for some function ${\tilde F}$ close to $F$.  The second is that the proposal density $q$ is close to $\pi$.  This is rigorously justified  in our Theorem \ref{thmbound} which states that when $q$ is close enough to  $\pi$ under KL-divergence, then the variance of the new estimator is smaller than $\sigma^2_{n,MC^*}$.

In the first part of our work, 
%provides the methodological arguments to 
we derive the new ergodic estimator for independent Metropolis algorithms for any real valued functions $F$ together with a theoretical result that proves that such estimator is a good generic candidate for estimating expectations $\E_\pi [F]$ if the proposal density $q$ is close enough to the target density. 
To illustrate our theoretical results and show that  new estimator can have smaller variance
than the i.i.d.\ sampling 
variance $\sigma^2_{n,MC^*}$
we consider a simple Bayesian linear regression problem of computing marginal densities of the form $\int F(x) \pi(x) d x$ where 
$F(x)$ is a Gaussian likelihood   $\pi(x)$ is a non-conjugate prior.

 In the second part, %encouraged by these results, 
 we build an adaptive independent Metropolis algorithm that updates every $B>1$ iterations the proposal density $q$ such that the KL-divergence $ \mathbb{KL}(q(x) \vert \vert \pi(x))$ is being reduced after every adaptation.  This update is based on a recent strand of research that exploits stochastic gradient-based optimisation techniques %to construct adaptation schemes 
 for KL-divergence minimisation 
in variational inference 
\citep{Ranganath14}
and adaptive MCMC \citep{Gabrie2022AdaptiveFlows}.  
We provide rigorously justified asymptotic arguments for (i) ergodicity of the resulting Markov chain, (ii) convergence to distribution of the estimators, (iii) a weak law of large numbers when both $B$ 
and the MCMC sampling size go to infinity and (iv) convergence of the scalar estimators that are used to construct our control variates.   
To experimentally evaluate the estimator based on the adaptive independent Metropolis scheme, 
we considered Bayesian  inference problems in logistic regression 
 and Gaussian process regression.
For several functions of interest $F$, including non-linear functions of parameters  such as the odds ratio, 
the variance reduction we achieved by adopting our estimator compared with the simple ergodic estimator of the adaptive independent Metropolis %has been  tested in two real data logistic regression examples of dimensions $3$ and $14$.  
varied between $2.7$ and $57.2$.  

\subsection{Related work}
There is a vast literature of variance reduction in MCMC via control variates; for a long list of references and some critical discussion see  \cite{alexopoulos2023variance}.   A strand of research is based on \cite{Assaraf1999Zero-varianceAlgorithms} who noticed that a Hamiltonian operator together with a  trial function are sufficient to construct an estimator with zero asymptotic variance.  They considered a Hamiltonian operator of Schr{\" o}dinger-type that led to a series of zero-variance estimators studied by \cite{valle2010new}, \cite{mira2013zero}, \cite{papamarkou2014zero},
\cite{belomestny2020variance}, \cite{south2018regularised}, \cite{Oates2017ControlIntegration}, \cite{Barp2018AMethod}, \cite{South2022Semi-exactMethod}, \cite{Oates2019ConvergenceMethod}.
Another approach which is closely related to our proposed methodology attempts to minimise the asymptotic variance of the ergodic estimator of the Markov chain.  One way to achieve this is based  on the observation that the solution of the Poisson equation for $F$ (also called the fundamental equation) automatically leads to a zero-variance  estimator for $F$.
Interestingly, a solution of this equation  produces zero-variance estimators suggested by \cite{Assaraf1999Zero-varianceAlgorithms} for a specific choice of Hamiltonian operator. 
One of the rare examples that the Poisson equation has been solved exactly  for discrete time Markov chains
is the random scan Gibbs sampler where the target density is a multivariate Gaussian density and the goal is to estimate the mean of each components of the target density, see \cite{dellaportas2012control,dellaportas2009notes}.  Since direct solution of the Poisson equation is not available, approximate solutions have been  suggested 
by  \cite{Andradottir1993VarianceSimulations}, \cite{Atchade2005ImprovingAlgorithm},  \cite{henderson1997variance}, \cite{meyn2008control}.  Recently, \cite{alexopoulos2023variance} provided a general framework that constructs estimators with reduced variance for random walk Metropolis and Metropolis-adjusted Langevin algorithms for means of each co-ordinate of a $d$-dimensional target density.

Our independent Metropolis estimator connects with a  Rao-Blackwellised estimator \citep{Robertetal2018}
and a coupling estimator  
\citep{pinto2001improving}. 
In Section \ref{sec:previousestimators}
we fully describe both these estimators and in Section \ref{sec:adaptiveNumerical}
we compare them against our proposed
estimator. The experimental results show that our new estimator, based on the Poisson 
equation, has significant smaller variance than the
previous methods.

Adaptive Markov chain Monte Carlo algorithms  with certain ergodic properties were proposed by \citet{Haario2001AnAlgorithm} and \citet{Roberts2009ExamplesMCMC}. An adaptation scheme of independent Metropolis together with  its convergence properties is provided by \citet{Holden2009AdaptiveMetropolis-Hastings}. \citet{Andrieu2006OnAlgorithms} and \citet{Roberts2007CouplingMCMC} studied  the ergodicity properties of adaptive MCMC algorithms and provided central limit theorems and weak laws of large numbers. 
Recently, \citet{Gabrie2022AdaptiveFlows} and \citet{brofos2022adaptation} suggest applying adaptive MCMC by adopting normalizing flows as independent Metropolis proposal densities, which are optimised 
by KL-divergence minimisation using 
the history of states.  In our adaptive MCMC algorithm 
we also use KL-divergence minimisation to optimise 
the proposal density $q$ but unlike 
\citet{Gabrie2022AdaptiveFlows,brofos2022adaptation} we make use of all candidate states (rejected or accepted) 
to adapt $q$. Specifically, we apply
stochastic gradient methods
based on the reprametrisation trick that are widely used for variational inference in machine learning 
\citep{Titsias2014DoublyInference, Rezende2014, Kingma2014, Roeder17, Gianniotis2015, Kucukelbir17, Salimans_2013, Tan18}. 

Finally, a parallel path of research is based on adaptive importance sampling, see for example \cite{Richard2007EfficientSampling,cappe2008adaptive,Cornebise2008AdaptiveModels, dellaportas2019importance, Paananen2021ImplicitlySampling}.  
However, note that for the intractable 
targets considered in this work, having unknown normalising 
constants, only self-consistent 
importance sampling estimators 
are possible which are biased. 
In contrast, our proposed 
estimators lead  to unbiased estimation given that MCMC converges. 
\subsection{Outline of the paper}
The rest of the paper is organised as follows.  Section $2$ provides the methodological arguments and their rigorous justification for the construction of control variables for the ergodic estimator of the independent Metropolis algorithm together with  illustrations  with simulated and real data.  Section $3$ deals with the construction of an adaptive independent Metropolis algorithm that adapts the proposal density such that its KL-divergence with the target is being reduced and illustrates its applicability with synthetic and real data.  We conclude with a brief discussion in Section $4$.   The proofs of theorems and more detailed discussions can be found in the supplementary material. The Python code to construct estimators and experiments is provided at https://github.com/cliusir123/VRforIM.
\section{Control variates for Independent Metropolis algorithms}
\subsection{A new estimator based on control variates}
We denote by $IM(P,\pi,q)$ the independent Metropolis algorithm defined on the state space $\X \subset \Real^d$ with transition kernel $P$ defined via (\ref{alpha}) and  two probability measures $\pi(\cdot)$ and $q(\cdot)$ with corresponding target and proposal densities $\pi(x)$  and  $q(y)$. 
The transition kernel $P$ that generates the Markov chain based on $IM(P,\pi,q)$ is expressed by 
\be
P(x,dy) := \alpha(x,y)q(y)dy + \left( 1 - \int%_{\Real^d} 
\alpha(x,z)q(z)dz \right) \delta_x(dy)
\nonumber
\ee
where $\delta_x$ is Dirac's measure centred at $x$. 
Then, for any function $G: \X \rightarrow \Real$ we have 
\begin{align}
PG(x) & := E_x [ G(X_1)] := E_x [ G(X_1)| X_0 = x] = \int P(x, dy)G(y)dy 
\nonumber \\
 & = \int \alpha(x, y)q(y)G(y)dy + \left(1- \int \alpha(x,z)q(z)dz \right) G(x) \nonumber \\
    & = G(x) + \int \alpha(x,y)(G(y) - G(x))q(y)dy. \label{PG}
\end{align}
\citet{henderson1997variance} observed that since the function $PG(x)-G(x)$ has zero expectation with respect to $\pi(\cdot)$, we can use it as control variate to reduce the variance of $\mu_{n,IM}$ by constructing, using (\ref{PG}), the control variate based estimator which is based on 
%function $G$ and 
samples $X_1,X_2,\ldots,X_n$ and is given by 
\begin{equation}
    \mu_{n,IMCV,G} = \frac{1}{n}\sum_{i=1}^{n}\left\{F(X_i) +  \int \alpha(X_i,y)(G(y)-G(X_i))q(y)dy \right\}.
    \label{mu_CV}
\end{equation}
The optimum choice of $G$ is  the solution of Poisson equation $\hat{F}(x)$ for $F$:
\begin{equation}
\label{PE}
    \int \alpha(x,y)(\hat{F}(y) - \hat{F}(x)) q(y) dy = -F(x) + \E_{\pi}[F], \text{ for every } x \sim \pi(\cdot),
\end{equation}
that  achieves zero variance for the estimator $\mu_{n,IMCV,{\hat F}}$. The solution $\hat{F}(x)$ of \eqref{PE} is translation invariant based on the Proposition 17.4.1 of \cite{Meyn2009MarkovEdition}. Therefore, in the following we will denote by $\hat{F}(x)$ the any member of the class of all translations of $\hat{F}(x)$.

A research strategy that was used by 
\cite{dellaportas2012control} for random scan Gibbs samplers and by \cite{alexopoulos2023variance} for random walk and Metropolis adjusted Langevin algorithms,  is to find an estimator based on some approximation $G \approx {\hat F}$.  We follow the same research avenue for $IM(P,\pi,q)$ by first proving the following Theorem:
\begin{theorem} (Proof in Section \ref{thmp1} of the supplementary material)
\label{thmsolution}
Assume that for a target density $\pi(x)$ there exists a sequence of proposal distributions  $\left\{q_i(x)\right\}_{i=1}^\infty$ 
such that 
$
\lim_{i \rightarrow \infty} q_i(x) \rightarrow \pi(x)
$
and for each proposal distribution $q_i$ the corresponding solution of the Poisson equation of $IM(P_i,\pi,q_i)$ for a function $F: \X \rightarrow \Real$ is  $\hat{F}_i(x)$.
Then, under some standard assumptions for all chains $IM(P_i,\pi,q_i)$ rigorously defined in (A\ref{asu1}), (A\ref{asu2}) and (A\ref{asu3}) of Section \ref{sec:thm},
$$
\lim_{i \rightarrow \infty}\hat{F}_i(x) \rightarrow F(x) + c ~ pointwise
$$
for some constant $c$.
\end{theorem}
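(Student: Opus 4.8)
The plan is to identify $\hat F_i$ with its Neumann-series representative
\[
\hat F_i(x) \;=\; \sum_{k=0}^{\infty}\bigl(P_i^k F(x) - \E_\pi[F]\bigr),
\]
which by a one-line telescoping computation satisfies $(I-P_i)\hat F_i = F-\E_\pi[F]$, i.e.\ solves \eqref{PE}, and which is the representative normalised by $\E_\pi[\hat F_i]=0$ (since $\E_\pi[P_i^kF]=\E_\pi[F]$ by stationarity of $\pi$). One then passes to the limit $i\to\infty$ inside the series. This needs two ingredients: (a) the termwise limits $P_i^kF(x)\to\E_\pi[F]$ for every fixed $k\ge1$ and $x$, and (b) a bound on the series tail that is uniform in $i$, which licenses the interchange of $\lim_i$ and $\sum_k$. (One could alternatively extract a subsequential limit of $\hat F_i$ by Arzel\`a--Ascoli and pass to the limit directly in \eqref{PE}, using uniqueness of the limiting equation up to constants; the Neumann route has the advantage of not requiring equicontinuity of $\{\hat F_i\}$.)

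For (a) I would first observe that $\alpha_i(x,y)\to1$ pointwise: since $q_i\to\pi$ pointwise and $\pi(x)=\mathcal{Z}^{-1}\exp\{f(x)\}$, the ratio $\exp\{f(y)\}q_i(x)/\bigl(\exp\{f(x)\}q_i(y)\bigr)\to1$, so $\alpha_i=\min(1,\cdot)\to1$. Using \eqref{PG} with $G=F$ gives $P_iF(x)=F(x)+\int\alpha_i(x,y)\bigl(F(y)-F(x)\bigr)q_i(y)\,\rmd y$; the integrand tends pointwise to $(F(y)-F(x))\pi(y)$ and is dominated by $2\|F\|_\infty q_i(y)$, whose integral equals the constant $2\|F\|_\infty$ for every $i$, so a generalised (Pratt-type) dominated convergence argument yields $P_iF(x)\to F(x)+\bigl(\E_\pi[F]-F(x)\bigr)=\E_\pi[F]$ at every $x$. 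Since $\|P_i^{k-1}F\|_\infty\le\|F\|_\infty$ and, inductively, $P_i^{k-1}F\to\E_\pi[F]$ pointwise, the same step applied to \eqref{PG} with $G=P_i^{k-1}F$ gives $P_i^kF(x)\to\E_\pi[F]+0=\E_\pi[F]$, closing the induction.

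For (b) I would invoke the assumptions (A\ref{asu1})--(A\ref{asu3}) to obtain a \emph{uniform} ergodicity estimate: under a bound of the form $\sup_i\sup_x \pi(x)/q_i(x)\le M<\infty$, the classical uniform ergodicity bound for the independent Metropolis kernel gives $\|P_i^kF-\E_\pi[F]\|_\infty\le C\|F\|_\infty(1-M^{-1})^k$ with $C$ and $M$ not depending on $i$. Thus the $i$th series is dominated termwise by the $i$-independent summable sequence $C\|F\|_\infty(1-M^{-1})^k$, and dominated convergence for series together with (a) gives $\hat F_i(x)\to\bigl(F(x)-\E_\pi[F]\bigr)+\sum_{k\ge1}0=F(x)-\E_\pi[F]$. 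This is the assertion with $c=-\E_\pi[F]$ for the mean-zero representative; any other translate $\hat F_i+c_i$ converges to $F+c$ with $c=-\E_\pi[F]+\lim_i c_i$ whenever the normalising constants $c_i$ converge.

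The main obstacle I anticipate is step (b): pointwise convergence $q_i\to\pi$ on its own does \emph{not} force a uniform geometric rate --- the $q_i$ could have progressively lighter tails and drive $M\to\infty$ --- so the real work is to verify that the quantitative content of (A\ref{asu1})--(A\ref{asu3}) genuinely delivers the $i$-uniform bound, and is not merely used to make each individual chain ergodic. The dominated-convergence steps in (a) are comparatively routine, using only $0\le\alpha_i\le1$ and $\int q_i=1$; and if $F$ is assumed only square-integrable or to obey a drift condition rather than bounded, the same scheme should carry over with $\|\cdot\|_\infty$ replaced by a Lyapunov $V$-norm and uniform $V$-geometric ergodicity in place of uniform ergodicity.
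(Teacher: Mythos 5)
Your proposal is correct and follows essentially the same route as the paper's proof: the paper also identifies $\hat F_i$ with the Neumann series $\sum_{n\ge 0}\{P_i^n F-\E_\pi[F]\}$, proves the termwise limits $P_i^n F\to\E_\pi[F]$ by induction, and then interchanges $\lim_i$ with the sum via an $i$-uniform domination of the tail, concluding $\hat F_i\to F-\E_\pi[F]$ and appealing to translation invariance for the constant $c$. The differences are in how the two ingredients are justified: for the termwise limits the paper cites convergence results for the IM kernel (Theorem 4.3 and Lemma D.3 of \cite{brofos2022adaptation}) together with Scheff\'e's lemma, whereas your direct argument via $\alpha_i(x,y)\to 1$, the representation \eqref{PG} and dominated convergence is more elementary and self-contained; for the tail bound the paper invokes the $W$-norm geometric ergodicity bound of Theorem 14.0.1 of \cite{Meyn2009MarkovEdition} with constants claimed uniform in $i$, while you use the classical Doeblin-type uniform ergodicity of IM with rate $(1-1/M)^k$ under $\sup_i w^\star_i\le M$. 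One caveat: your main-line argument works with $\Vert F\Vert_\infty$, i.e.\ bounded $F$, which is strictly stronger than (A\ref{asu2}) ($F\in L^W_\infty$ with $W\in L_4^{\pi,q}$); you correctly note that the argument carries over in a Lyapunov $V$-norm, which is exactly the norm the paper works in. Finally, the obstacle you flag --- that pointwise convergence $q_i\to\pi$ does not by itself give an $i$-uniform bound on $w^\star_i$ or on the ergodicity constants --- is a genuine point: the paper's proof asserts the uniform bound $\sum_n\Vert P_i^n-\Pi\Vert_W\le B(V+1)$ ``for all $i$'' without further argument, so your proposal is, if anything, more explicit about where uniformity in $i$ must be supplied by the assumptions.
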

Theorem \ref{thmsolution} guarantees that as the proposal density converges to the target density, the %corresponding approximate 
solution to the Poisson equation converges to the function $F$.  It is straightforward to observe that at the limit 
$\lim_{i \rightarrow \infty} q_i(x)=\pi(x)$ the acceptance ratio $\alpha(x,y)$ in (\ref{PE}) becomes one and the solution of the Poisson equation is just ${\hat F}(x) = F(x)$. Motivated by this limiting case, for any independent Metropolis sampler (where $q(x) \neq \pi(x)$) we propose to use the function $G(x) = F(x)$ 
in the general control variate based estimator in \eqref{mu_CV}, which leads to the estimator   
%The corresponding variance reduction estimator (\ref{mu_CV}) becomes 
\begin{equation}
    \mu_{n,IMCV,F} = \frac{1}{n}\sum_{i=1}^{n}\left\{F(X_i) + \int \alpha(X_i,y) (F(y)-F(X_i))q(y)dy \right\}.
    \label{mu_CV_F}
\end{equation}
Unlike the approach of \cite{alexopoulos2023variance}, the above estimator does not require any approximation of function $G$ because it simply assumes that $G(x)=F(x)$ is a good approximation to ${\hat F(x)}$.  However, (\ref{mu_CV_F}) still requires the evaluation of the  intractable integral 
$\int \alpha(X_i,y) (F(y)-F(X_i))q(y)dy$
for each sampled point $X_i$. 
Since the transition kernel of the $IM(P,\pi,q)$ algorithm required one sample $Y_i \sim q$ every time the chain was at state $X_i$, a Monte Carlo estimator of this integral 
can be based on the (sample size one!) estimator $\alpha(X_i,Y_i)(F(Y_i) - F(X_i))$, where 
all acceptance ratios $\alpha(X_i,Y_i)$ are available at no additional cost. This suggests that (\ref{mu_CV_F}) can be approximated by 
\begin{equation}
    \mu_{n,IMCV,F} \approx \frac{1}{n} \sum_{i = 1}^{n} \left\{ F(X_i) + \alpha(X_i,Y_i)(F(Y_i) - F(X_i)) \right\}. \label{mu_CV_F_one}
\end{equation} 
A final trick to achieve our final estimator is to use a further control variate to reduce the variance of the estimator (\ref{mu_CV_F_one}), by reducing the variance of the sample size one estimate $\alpha(X_i,Y_i)(F(Y_i) - F(X_i))$ of the integral  $\int \alpha(X_i,y) (F(y)-F(X_i))q(y)dy$. If the expected value 
$\E_q [F]$ is analytically available then an immediate choice of control variate is the zero-mean  random variable $F(Y)-\E_q [F]$ resulting to our proposed final estimator 
\begin{equation}\label{imcv}
\mu_{n,IMCV} = \frac{1}{n} \sum_{i = 1}^{n} \left\{ F(X_i) +  \left\{ \alpha(X_i,Y_i)(F(Y_i) - F(X_i))   
- ( F(Y_i) - \E_q [F] ) \right\} \right\}.
\end{equation}
If the expected value 
$\E_q [F]$ is analytically intractable, but there exists an approximation ${\tilde F}$ of $F$ such that $\E_q[{\tilde F}]$ is analytically available, then we can replace the control variate $F(y)-\E_q [F]$ by the control variate ${\tilde F}(y)-\E_q [{\tilde F}]$ and obtain 
\begin{equation} \label{approximcv}
\mu_{n,IMCV,\tilde F} = \frac{1}{n} \sum_{i = 1}^{n} \left\{ F(X_i) + \left\{ \alpha(X_i,Y_i)(F(Y_i) - F(X_i))   
- ({\tilde F}(Y_i)-\E_q [{\tilde F}]) \right\} \right\}. 
\end{equation}
It also possible to further 
improve the estimators \eqref{imcv} and 
\eqref{approximcv} 
by estimating a couple of regression coefficients in front of the control variates 
to further minimize variance, as
we detail in Section \ref{sec:coef} of the supplementary material. 

\subsection{Connection with previous estimators
\label{sec:previousestimators}
} 

\subsubsection{Connection to Rao–Blackwellisation by integrating out decision step}

Our proposed estimator  connects with a basic Rao-Blackwellised estimator that integrates out the decision
step at each Metropolis–Hastings iteration; see for instance 
Section 6.1 in \cite{Robertetal2018}.
Given that $X_i$ is the  current state, $Y_i$ is the proposal and $u_i \sim U(0,1)$ is the uniform random variable used to accept or reject $Y_i$, the  IM estimator (or any other MCMC estimator)
can be written as\footnote{Note that we apply some re-indexing to
re-write $\frac{1}{n} \sum_{i=1}^n F(X_i)$ as $\frac{1}{n} \sum_{i=1}^n F(X_{i+1})$ so that the counting of the samples starts at $i=2$.} 
$$
\mu_{n,IM} = \frac{1}{n} \sum_{i=1}^{n} 
F(X_{i+1}) 
= \frac{1}{n} \sum_{i=1}^n I( u_i < \alpha(X_i, Y_i) ) F(Y_i) 
+ (1 - I( u_i < \alpha(X_i, Y_i) ))
F(X_i), 
$$
where we used that the next state $X_{i+1}$ is equal to either $Y_i$ when the 
indicator function $I( u_i < \alpha(X_i, Y_i) )=1$ or $X_i$ otherwise. By taking the expectation under the uniform distribution $U(0,1)$ to integrate out $u_i$ we obtain 
\begin{align}\label{rbest}
\mu_{n,RB} & = \frac{1}{n} \sum_{i=1}^n \alpha(X_i, Y_i)  F(Y_i) 
+ (1  - \alpha(X_i, Y_i) )
F(X_i) \nonumber \\
&= \frac{1}{n} 
\sum_{i=1}^n 
\left\{F(X_i)  + \alpha(X_i, Y_i)
(F(Y_i) - F(X_i)) 
\right\}
\end{align}
which is the same with \eqref{mu_CV_F_one}. Therefore, 
an interpretation of our final proposed estimator $\mu_{n,IMCV}$
in \eqref{imcv} is that it adds a
control variate $F(Y_i) - \mathbb{E}_q[F]$ to further 
reduce the variance associated 
with the proposed state $Y_i$ 
in the above Rao–Blackwellised estimator.

In practice, as we observe 
in our experiments in Section \ref{sec:adaptiveNumerical}, the Rao-Blackwellised estimator for IM it is not effective and it can lead to only very minor variance reduction. For example, in the ideal scenario where the proposal $q$ matches exactly the target $\pi$ and $\alpha(X_i,Y_i)=1$ for all samples, the Rao-Blackwellised estimator reduces
to $\frac{1}{n}\sum_{i=1}^n F(Y_i) = \frac{1}{n}\sum_{i=1}^n F(X_{i+1})$ which means that there is no variance reduction at all. In contrast, in such case our estimator from 
\eqref{imcv} will have zero variance.

\subsubsection{Connection with 
a coupling estimator}

\cite{pinto2001improving} proposed a control variate via coupling Markov chains to reduce the variance. By adopting this framework to independent Metropolis chain $\left\{X_i\right\}$ produced by the $IM(P,\pi,q)$ sampler, we construct a coupled Markov chain $\left\{Y_i\right\}$ where the target and proposal are the same, i.e., both are $q$. In other words, the coupled chain always accepts the proposed samples.  Therefore, we form the following estimator with control variate based on the coupled chain:
\begin{equation}\label{couplingIM}
    \mu_{n,CIM} = \frac{1}{n} \sum_{i = 1}^{n} \left\{ F(X_i)-(F(Y_{i-1}) - \E_q [F] ) \right\}.
\end{equation}
Note that when we use the coupled chain as control variate, the index of the sample $Y_{i-1}$ is shifted back one position to achieve a high correlation with the original chain, since whenever 
$IM(P, \pi,q)$ accepts the proposed sample $Y_{i-1}$ it holds $X_i=Y_{i-1}$.
We can also add a coefficient $\hat{c}$
in front of the control variate in order to further reduce the variance, see Section \ref{sec:coef} of the supplementary material. The coupling estimator has zero variance in the ideal case. To see this, when $q$ matches exactly $\pi$, the $IM(P,\pi,q)$ sampler always accepts the proposed sample, i.e., $X_i = Y_{i-1}$ for all $i$. Then, the coupling estimator becomes $\E_q[F]$ which is the exact expectation. However, our experiments show that the coupling estimator, while it can achieve variance reduction, is still one order of magnitude worse than our proposed estimator $\mu_{n,IMCV}$ from \eqref{imcv}.

\subsubsection{Connection with an importance sampling estimator}

Importance sampling estimators are based on i.i.d. samples  $X_1,X_2,\ldots,X_n$ from some  probability density (importance function) $q^*$ on $\X$ with respect to a measure $q^*(\cdot)$ that produce the {\em biased} estimator 
\begin{equation}
\label{ISest}
        \mu_{n,IS} = \frac {\sum_{i=1}^n F(X_i) w^*(X_i)}{\sum_{i=1}^n w^*(X_i)},~~w^* (X_i) = \pi(X_i) / q^*(X_i)
\end{equation}
which can achieve a variance smaller than  $\sigma^2_{n,MC^*}$ obtaining its smallest value when $q^*(x) \propto |F(x) - \E_\pi[F] | \pi(x)$ for all $x \in \X$, see
\cite{Kahn1953MethodsComputations}.  Leaving aside the fact that this is a biased estimator, note that a good choice of an importance function  $q^*(x)$ should mimic the shape of $|F(x) - \E_\pi[F] | \pi(x)$ and have thinner tails than 
$F(x)\pi(x)$ whereas the proposal density $q$ in \eqref{imcv} should be close to $\pi$. Moreover, unlike importance sampling where the optimal 
$q^*$ depends on $F$, in our estimator the optimal $q$ depends only on
$\pi$ so with no extra cost we can re-use the same MCMC samples to obtain variance reduction for many functions $F$.  Note that in this paper we only focus on unbiased estimators so our numerical comparisons do not consider importance sampling estimators.

\subsection{Theoretical justifications}\label{sec:thm}
The asymptotic variance of $\mu_{n,IMCV}$ is given by
\begin{equation}\label{asydef}
    \sigma^2_{IMCV} = \lim_{n\rightarrow \infty} Var_{\pi,q}[\sqrt{n}\mu_{n,IMCV}]
\end{equation}
and the question that arises is when $\sigma^2_{IMCV}$ is smaller than $ \sigma^2_{F}$.  
Let $W\geq 1$ be a function on $\X$, we define a Banach space
$$
    L^W_{\infty} := \left\{f : \X \rightarrow \Real \Bigg\vert \Vert f \Vert_W:=\mathop{\text{Sup}}\limits_{x \in \X}\left\{\frac{|f(x)|}{W(x)}\right\}<\infty \right\}
$$
and also define $$L_{n}^{\pi,q} := \left\{f:\X \rightarrow \Real \Bigg\vert \int_\X \vert f(x) \vert^n \pi(x)dx < \infty, \int_\X \vert f(x) \vert^n q(x)dx < \infty \right\}$$  where $\pi$ and $q$ are the corresponding densities of 
    $IM(P,\pi,q)$,  and $w^{\star} := \mathop{\text{Sup}}\limits_{x \in \X}(w_x)$ where $w_x = \pi(x)/q(x)$. We state the following assumptions.

\begin{assumption}[A1]\label{asu1}
The Markov chain produced by the $IM(P,\pi,q)$ sampler is $\psi$-irreducible and aperiodic with unique invariant measure $\pi$.
\end{assumption}

\begin{assumption}[A2]\label{asu2}
    $F \in L^W_\infty$ and $W \in L_4^{\pi,q}$. 
\end{assumption}
\begin{assumption}[A3]\label{asu3}
 $w^{\star}<\infty$.
    
\end{assumption}

Assumptions (A\ref{asu1}), (A\ref{asu2}) and (A\ref{asu3}), guarantee standard theoretical MCMC results as shown in the following proposition.

\begin{proposition}(Proof in Section \ref{propp1} of the supplementary material)\label{prop1}
    Under (A\ref{asu1}),(A\ref{asu2}) and (A\ref{asu3}):
    \begin{enumerate}
    \item $(Ergodicity)$ The Markov chain  produced by the $IM(P,\pi,q)$ sampler is ergodic.
    \item $(LLN)$ $\mu_{n, IMCV} \rightarrow \E_{\pi}(F)$ as $n \rightarrow \infty \text{ }a.s.$
    \item $(CLT)$  $\left\{\mu_{n,IMCV} - \E_{\pi}[\mu_{n,IMCV}]\right\} / \sqrt{n}$ converges in distribution to $\mathcal{N}(0, \sigma^2_{IMCV})$ as $n \rightarrow \infty$ if $0<\sigma^2_{IMCV}<\infty$.
\end{enumerate}
\end{proposition}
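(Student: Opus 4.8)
The plan is to reduce all three claims to standard Markov chain limit theory applied to a suitable chain. For part (1), I would invoke the classical fact that Assumption (A\ref{asu3}) is precisely the condition making the independent Metropolis kernel $P$ uniformly ergodic: since the absolutely continuous part of $P(x,\cdot)$ has density $\alpha(x,y)q(y)=\min\{\pi(x)q(y),\pi(y)q(x)\}/\pi(x)$, and because $w^{\star}<\infty$ forces both $\pi(y)/q(y)\le w^{\star}$ and $\pi(x)/q(x)\le w^{\star}$, one gets the one-step minorisation $P(x,dy)\ge (w^{\star})^{-1}\pi(dy)$ uniformly in $x$. Together with (A\ref{asu1}) this Doeblin condition gives uniform (hence ordinary) ergodicity of the $X$-chain, which is exactly claim (1).

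For (2) and (3) the point to notice is that $\mu_{n,IMCV}$ is not a functional of the $X$-chain alone: each summand also uses the proposal $Y_i\sim q$ drawn at step $i$. I would therefore pass to the augmented process $Z_i:=(X_i,Y_i)$ on $\X\times\X$, which is Markov (given $Z_i=(x,y)$, set $X_{i+1}=y$ with probability $\alpha(x,y)$ and $X_{i+1}=x$ otherwise, then draw $Y_{i+1}\sim q$ afresh, independently of the past) and has invariant law $\pi\otimes q$ (because $\pi P=\pi$ and $Y_{i+1}$ is independent of $X_{i+1}$). Writing $h(x,y):=F(x)+\alpha(x,y)(F(y)-F(x))-(F(y)-\E_q[F])=(1-\alpha(x,y))(F(x)-F(y))+\E_q[F]$, we have $\mu_{n,IMCV}=n^{-1}\sum_{i=1}^n h(Z_i)$, and by \eqref{PG} together with $\pi P=\pi$,
\[
\E_{\pi\otimes q}[h]=\E_\pi[F]+\int_\X \pi(x)\big(PF(x)-F(x)\big)\,dx=\E_\pi[F],
\]
which re-derives unbiasedness.

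The one genuinely non-routine step is to carry the minorisation over to $Z$. A one-step bound fails, since the acceptance ratio $\alpha(x,y)$ is not bounded below, but a two-step bound survives: integrating the fresh proposal out of the second transition gives $P_Z^2((x,y),dx''\,dy'')=\big[\alpha(x,y)P(y,dx'')+(1-\alpha(x,y))P(x,dx'')\big]q(dy'')$, and since $P(x,\cdot)$ and $P(y,\cdot)$ both dominate $(w^{\star})^{-1}\pi$, their convex combination does too, so $P_Z^2((x,y),\cdot)\ge (w^{\star})^{-1}(\pi\otimes q)(\cdot)$ uniformly in $(x,y)$; hence $Z$ is uniformly ergodic. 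Finally, (A\ref{asu2}) controls $h$: $|h(x,y)|\le \norm{F}_W\big(W(x)+W(y)\big)+|\E_q[F]|$, so $\E_{\pi\otimes q}[h^2]\le 3\norm{F}_W^2\big(\E_\pi[W^2]+\E_q[W^2]\big)+3|\E_q[F]|^2<\infty$ because $W\in L_4^{\pi,q}\subset L_2^{\pi,q}$, whence $h\in L^2(\pi\otimes q)\subset L^1(\pi\otimes q)$. The strong law for positive Harris chains then gives $\mu_{n,IMCV}\to\E_\pi[F]$ a.s.\ (claim (2)), and the Markov chain central limit theorem valid for uniformly ergodic chains and $L^2$ functionals gives the asserted convergence in distribution to $\mathcal N(0,\sigma^2_{IMCV})$ with $\sigma^2_{IMCV}=\mathrm{Var}_{\pi\otimes q}(h(Z_0))+2\sum_{k\ge1}\mathrm{Cov}_{\pi\otimes q}(h(Z_0),h(Z_k))<\infty$; non-degeneracy ($\sigma^2_{IMCV}>0$) is assumed in the statement.

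I expect the only real obstacle to be the second-step minorisation for the augmented chain: recognising that the estimator's dependence on the proposals $Y_i$ forces one off the $X$-chain, checking that $(X_i,Y_i)$ is Markov with the product invariant law, and seeing that although $\alpha$ kills the one-step Doeblin bound it is restored after averaging over the next proposal. Everything after that — the ergodic theorem and the CLT, together with the $L^p(\pi\otimes q)$ membership of $h$ supplied by (A\ref{asu2}) — is a direct application of standard results such as those in \cite{Meyn2009MarkovEdition}.
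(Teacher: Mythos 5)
Your proposal is correct, but it proves the result by a genuinely different route than the paper. The paper's proof is a one-paragraph reduction to Theorem 2 of \cite{dellaportas2012control}: it verifies a Lyapunov (geometric drift) condition, noting that $W \in L_1^{\pi,q}$ already yields the drift function $V$, and explains that the stronger requirement $W \in L_4^{\pi,q}$ in (A\ref{asu2}) is imposed for the variance bound used later in Theorem \ref{thmbound}, not for the ergodicity/LLN/CLT themselves. You instead argue directly from (A\ref{asu3}) via the classical Doeblin minorisation $P(x,dy)\ge (w^{\star})^{-1}\pi(dy)$, i.e.\ the Mengersen--Tweedie characterisation of uniform ergodicity of the independent Metropolis kernel, and then — this is the part the paper leaves implicit behind its citation — you observe that $\mu_{n,IMCV}$ is a functional of the pairs $(X_i,Y_i)$ rather than of the $X$-chain alone, pass to the augmented chain with invariant law $\pi\otimes q$, restore the minorisation at two steps by averaging over the intermediate proposal, and check $h\in L^2(\pi\otimes q)$ from $F\in L^W_\infty$ and $W\in L_4^{\pi,q}\subset L_2^{\pi,q}$ before invoking the standard LLN and CLT for uniformly ergodic Harris chains (as in \cite{Meyn2009MarkovEdition}). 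Your minorisation computations and the identity $\E_{\pi\otimes q}[h]=\E_\pi[F]$ are correct. What your approach buys is a self-contained argument under uniform ergodicity that makes explicit how the dependence on the rejected proposals $Y_i$ is handled; what the paper's drift-based route buys is uniformity with the $V$-norm machinery it needs elsewhere (Theorem \ref{thmsolution} and the asymptotic-variance bounds), where the $L_4$ moment condition does real work.
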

We now state our key result.  By denoting the Kullback–Leibler(KL) divergence as 
$$
\mathbb{KL}(q(x) \vert \vert \pi(x)) := \int q(x) \log \frac{q(x)}{\pi(x)}dx 
$$
we  provide a general result that connects $\sigma^2_{IMCV}$ in (\ref{asydef}) with $\sigma^2_{F}$  in (\ref{true_var}) and $\sigma^2_{IM}$ in (\ref{sigmaIM}):
\begin{theorem}\label{thmbound} (Proof in Section \ref{thmp2} of the supplementary material)
Under  (A\ref{asu1}), (A\ref{asu2}) and (A\ref{asu3}), there exists a constant $\epsilon >0$ such that if $\mathbb{KL}(q(x) \vert \vert \pi(x)) \leq \epsilon$ then
$\sigma^2_{IMCV} \leq \sigma^2_{F}\leq \sigma^2_{IM}$.
\end{theorem}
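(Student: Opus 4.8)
The plan is to prove the two inequalities separately. The right-hand one, $\sigma^2_F\le\sigma^2_{IM}$, is the classical statement that the independent Metropolis kernel $P$ is a positive operator on the mean-zero space $L^2_0(\pi)$: writing $F_0:=F-\E_\pi[F]$, every lag autocovariance $\mathrm{Cov}_\pi(F(X_0),F(X_k))=\langle F_0,P^kF_0\rangle_\pi$ is nonnegative, so $\sigma^2_{IM}=\sigma^2_F+2\sum_{k\ge1}\mathrm{Cov}_\pi(F(X_0),F(X_k))\ge\sigma^2_F$; I would simply record this, citing the spectral analysis of the IM sampler. The substance is the left-hand inequality, and the plan there is a perturbation argument: show that for every $\delta>0$ there is $\epsilon>0$ with $\mathbb{KL}(q\,\|\,\pi)\le\epsilon\Rightarrow\sigma^2_{IMCV}\le\delta$, and then apply it with $\delta=\sigma^2_F>0$. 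If $\sigma^2_F=0$ then $F$ is $\pi$-a.s.\ constant and all three estimators are exact, so assume $\sigma^2_F>0$.

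First I would reformulate the estimator. With $w_x:=\pi(x)/q(x)$ we have $\alpha(x,y)=\min(1,w_y/w_x)$, and the identity $F(x)+\alpha(x,y)(F(y)-F(x))-F(y)=-(1-\alpha(x,y))(F(y)-F(x))$ turns the summand of \eqref{imcv} into $-g(x,y)+\E_q[F]$, where $g(x,y):=(1-\alpha(x,y))(F(y)-F(x))$. Hence $\mu_{n,IMCV}-\E_\pi[F]=-n^{-1}\sum_{i=1}^n(g(V_i)-\E_{\pi\otimes q}[g])$ with $V_i:=(X_i,Y_i)$ a Markov chain on $\X\times\X$ with stationary law $\pi\otimes q$, so $\sigma^2_{IMCV}=\lim_n\mathrm{Var}_{\pi\otimes q}(n^{-1/2}\sum_{i=1}^n g(V_i))$. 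Next I would bound this by $\E_{\pi\otimes q}[g^2]$: assumption (A\ref{asu3}) ($w^\star<\infty$) makes the IM chain, hence $\{V_i\}$, uniformly ergodic with some geometric rate $\rho<1$ depending on $w^\star$, and $|g(x,y)|\le2\|F\|_W(W(x)\vee W(y))\in L_4^{\pi\otimes q}$ by (A\ref{asu2}). Writing $\bar g:=g-\E_{\pi\otimes q}[g]$ and splitting the autocovariance series at a lag $K$ — using $|\mathrm{Cov}(\bar g(V_0),\bar g(V_k))|\le\mathrm{Var}(\bar g)\le\E_{\pi\otimes q}[g^2]$ for $k\le K$ and a Davydov-type bound $|\mathrm{Cov}(\bar g(V_0),\bar g(V_k))|\le C\rho^{k/2}\|\bar g\|^2_{L_4}$ for $k>K$ — gives
\[
\sigma^2_{IMCV}\le(1+2K)\,\E_{\pi\otimes q}[g^2]+\frac{2C\,\rho^{K/2}}{1-\rho^{1/2}}\,\|\bar g\|^2_{L_4},
\]
and since $\|\bar g\|_{L_4}\le4\|F\|_W(\E_\pi[W^4]+\E_q[W^4])^{1/4}<\infty$ is fixed, one picks $K$ so that the last term is at most $\delta/2$.

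It then remains to show $\E_{\pi\otimes q}[g^2]\to0$ as $\mathbb{KL}(q\,\|\,\pi)\to0$. Using $0\le1-\alpha\le1$, $(F(y)-F(x))^2\le2\|F\|_W^2(W(x)^2+W(y)^2)$, the change of measure $\pi(x)=w_xq(x)$, the identity $w_x(1-\alpha(x,y))=(w_x-w_y)_+\le|w_x-w_y|$, Cauchy--Schwarz, and $\E_{q\otimes q}[(w_X-w_Y)^2]=2\,\mathrm{Var}_q(w)$, one gets $\E_{\pi\otimes q}[g^2]\le C'\sqrt{\mathrm{Var}_q(w)}$ with $C'$ depending only on $\|F\|_W$ and $\E_q[W^4]$. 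Finally I would convert the chi-square--type quantity $\mathrm{Var}_q(w)$ into a KL bound: with $\psi(t):=t-1-\log t\ge0$ one has $\mathbb{KL}(q\,\|\,\pi)=\E_q[\psi(w)]$ (since $\E_q[w]=1$ and $\E_q[\log w]=-\mathbb{KL}(q\,\|\,\pi)$), and because $0<w\le w^\star<\infty$ $q$-a.s.\ there is a constant $c(w^\star)>0$ with $\psi(t)\ge c(w^\star)(t-1)^2$ on $(0,w^\star]$, so $\mathrm{Var}_q(w)\le c(w^\star)^{-1}\mathbb{KL}(q\,\|\,\pi)$. Chaining everything, $\sigma^2_{IMCV}\le(1+2K)C'(c(w^\star)^{-1}\mathbb{KL}(q\,\|\,\pi))^{1/2}+\delta/2\le\delta$ once $\mathbb{KL}(q\,\|\,\pi)\le\epsilon$ for a suitably small $\epsilon$, and taking $\delta=\sigma^2_F$ finishes the proof.

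The step I expect to be the main obstacle is controlling the tail of the autocovariance series of the unbounded functional $g(V_i)$ and, above all, keeping the constants in that bound ($\rho$, and $\E_q[W^4]$ entering through $\|\bar g\|_{L_4}$) from degrading as $q\to\pi$: these depend on $w^\star$ and on $\|W\|_{L_4^{\pi,q}}$, so the threshold $\epsilon$ genuinely depends on those quantities, and a threshold uniform over a family of proposals needs a common bound on $w^\star$ and the relevant $L_4$ norms — precisely the setting of the approximating sequence $\{q_i\}$ of Theorem \ref{thmsolution}. Establishing the Davydov-type covariance inequality for the augmented chain $\{V_i\}$ with the merely-$L_4$ function $g$ is routine given the uniform ergodicity from (A\ref{asu3}) and the moments from (A\ref{asu2}), but it is where the technical care concentrates.
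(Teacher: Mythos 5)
Your proposal is correct in substance but follows a genuinely different route from the paper for the key inequality $\sigma^2_{IMCV}\le\sigma^2_F$. The paper works with the exact representation of the asymptotic variance of the augmented functional: it recalls the Smith--Tierney form of the $n$-step IM kernel (Lemma \ref{lem1}), uses Tan's explicit formula for the lag-$k$ covariances of $\varphi_0(x,y)$ (your $g$ recentred, up to sign), and from it derives the closed-form bound $\sigma^2_{IMCV}\le 2{w^{\star}}^2\mathrm{Var}_{q,q}(\varphi_0)-\mathrm{Var}_{\pi,q}(\varphi_0)$ (Lemma \ref{lem2}); it then controls the right-hand side via total variation and Pinsker's inequality (Lemma \ref{lem3} and Remark \ref{rmk:gen}), obtaining $\sigma^2_{IMCV}\le\tilde C(\epsilon)\epsilon^{1/4}$ and an explicit admissible threshold $\epsilon\le(\sigma_F^2/C^{\star})^4$. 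You instead treat $(X_i,Y_i)$ as a generic uniformly ergodic chain (uniform ergodicity following from $w^{\star}<\infty$), truncate the autocovariance series at a lag $K$ with a Davydov-type bound using the $L_4$ moments from (A\ref{asu2}), and reduce everything to the one-step second moment $\E_{\pi\otimes q}[g^2]$, which you bound by $C'\sqrt{\mathrm{Var}_q(w)}$ via the identity $w_x(1-\alpha(x,y))=(w_x-w_y)_+$ and then by KL through $\psi(t)=t-1-\log t\ge c(w^{\star})(t-1)^2$ on $(0,w^{\star}]$ — a neat replacement of Pinsker that exploits (A\ref{asu3}) and yields the better exponent $\epsilon^{1/2}$, at the price of non-explicit mixing constants and a threshold defined only implicitly through $K$, whereas the paper's bound is fully explicit (and is also what drives Corollary \ref{cor1} and the numerical bounds in Section \ref{ap:1dn}). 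Your treatment of $\sigma^2_F\le\sigma^2_{IM}$ via nonnegativity of the IM lag covariances is exactly the paper's argument (the nonnegativity is visible term-by-term in the $\gamma_k$ formula). Two small points to tidy: in the degenerate case $\sigma_F^2=0$ you need $q\ll\pi$ (which finiteness of the KL gives) to conclude $F$ is also $q$-a.s.\ constant and hence $\sigma^2_{IMCV}=0$; and you should state explicitly that the bivariate chain $(X_i,Y_i)$ inherits the uniform ergodicity rate of $\{X_i\}$ (immediate, since $Y_i$ is an independent fresh draw from $q$). The dependence of $\epsilon$ on $q$-dependent constants ($w^{\star}$, moments of $W$) that you flag is shared by the paper's proof, so it is not a defect of your approach relative to theirs.
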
 

Theorem \ref{thmbound} states that under fairly general assumptions of the functions $F$ for which we need to calculate $\E_\pi[F]$, we can construct an $IM(P,\pi,q)$ sampler so that the ergodic estimator $\mu_{n,IMCV}$ of (\ref{imcv}) achieves smaller variance of the corresponding i.i.d sampling or crude Monte Carlo (CMC) estimators $\mu_{n,MC^*}$ and the standard ergodic estimator $\mu_{n,IM}$ of $IM(P,\pi,q)$  (\ref{mcmcest}).  This immediately suggests that a new methodological strategy to estimate $\E_\pi[F]$ with Monte Carlo is to use an $IM(P,\pi,q)$ sampler with a suitably chosen proposal density $q$ that has two properties: it approximates well $\pi$ and $\E_q[F]$ is analytically available. 

\subsection{Numerical illustrations}
\label{illustrations}
In subsection \ref{sec:1d-n} we illustrate Theorem \ref{thmbound} in simple synthetic data examples while in subsection \ref{sec:le} we consider a more challenging Bayesian model selection example. We examine the relative performance of two estimators by the variance reduction factor (VRF) which is defined as the ratio of their variances. For example, the measure of performance of our estimator $\mu_{n,IMCV}$ against the i.i.d.\ sampling or CMC estimator $\mu_{n,MC^*}$
is based on  the ratio 
$\sigma^2_{F} / \sigma^2_{IMCV}$.
In our experiments we fix $n$ and produce $T$ independent repetitions $\{\mu_{n,IMCV}^{(i)}\}_{i=1}^{T}$ and  $\{\mu_{n,MC^*}^{(i)}\}_{i=1}^{T}$ that are used to estimate  a truncated
version of the variance reduction factor by
\begin{equation}\label{VRF}
    VRF = \frac{\sum_{i=1}^T\{ \mu_{n,MC^*}^{(i)}-T^{-1} \sum_{i=1}^T \mu_{n,MC^*}^{(i)}\}^2}{\sum_{i=1}^T\{ \mu_{n,IMCV}^{(i)}-T^{-1}\sum_{i=1}^T\mu_{n,IMCV}^{(i)}\}^2}.
\end{equation}

\subsubsection{Synthetic data examples}\label{sec:1d-n}
We first construct a synthetic data example in which we assume that the target distribution $\pi(x)$ is a standard normal distribution $\mathcal{N}(x|0,1)$ and the quantity of interest is the expected value of $F(x) = x$.  We choose two different proposals $q(y)$, a normal distribution $\mathcal{N}(y|0,\sigma^2)$ and a zero-mean student-t distribution with $\nu$ degrees of freedom  $t_{\nu}(y)$. 
Independent Metropolis algorithms were initialised when a point  drawn from the proposal distribution is accepted.  $T=20$ independent repetitions of size $n=5000$ samples were used to obtain the estimators $\{\mu_{n,IMCV}^{(i)}\}_{i=1}^{T}$ and  
$\{\mu_{n,MC^*}^{(i)}\}_{i=1}^{T}$ that were used to construct the box-plots of Figures \ref{fig:1a} and \ref{fig:1c}.  It is evident that as the proposal densities approach the target, the sample variance of the estimators decreases.  The corresponding VRFs 
are depicted (shown in log space) in Figures \ref{fig:1b} and \ref{fig:1d}.  Notice that when the proposal density is far away from the target, 
$\mu_{n,IMCV}$ has larger variance than that of   
$\mu_{n,MC^*}$.
Furthermore, Figures \ref{fig:1b} and \ref{fig:1d} depict the 
theoretical lower bounds for the variance reduction factor \eqref{VRF} which are used in the proof of Theorem \ref{thmbound} in Section \ref{thmp2} of the supplementary material. For these particular examples these bounds can be analytically derived, see  \eqref{1dnormalbound} and \eqref{1dtbound} in Section \ref{ap:1dn} of the supplementary material.
\begin{figure}[ht]
\centering

\subfigure[]{
    \includegraphics[width=0.47\textwidth]{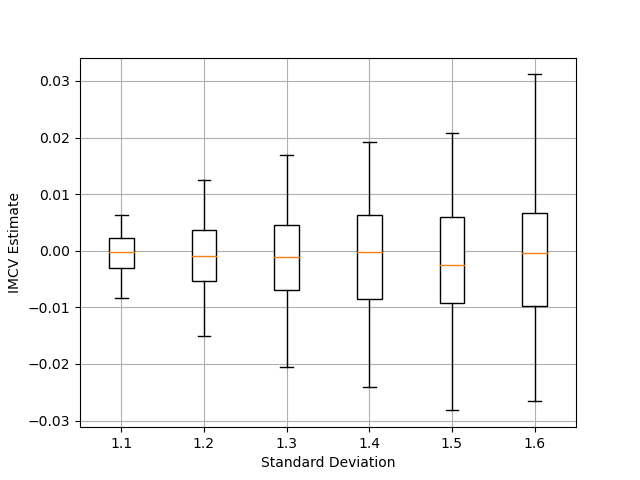}
    \label{fig:1a}
}
\subfigure[]{
    \includegraphics[width=0.47\textwidth]{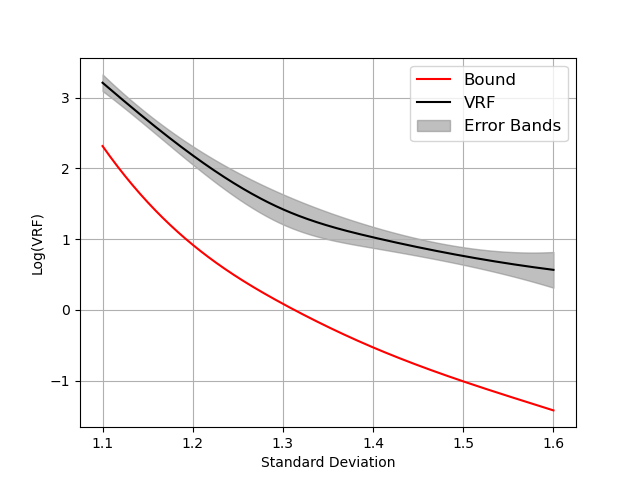}
    \label{fig:1b}
}

\subfigure[]{
    \includegraphics[width=0.47\textwidth]{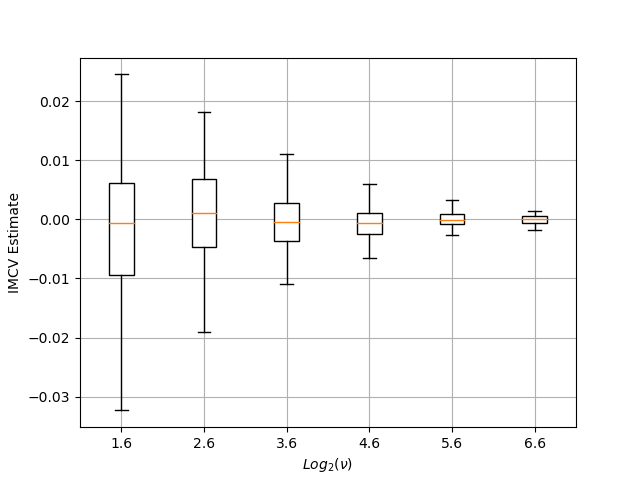}
    \label{fig:1c}
}
\subfigure[]{
    \includegraphics[width=0.47\textwidth]{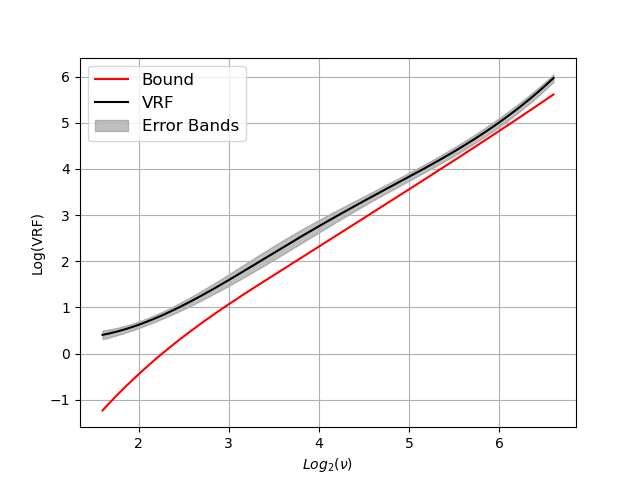}
    \label{fig:1d}
}
\caption{Comparison of $\mu_{n,IMCV}$ and $\mu_{n,MC^*}$ estimators. Top row: $\mathcal{N}(x|0,1)$ target and $\mathcal{N}(x|0,\sigma^2)$ proposal. Bottom row: $\mathcal{N}(x|0,1)$ target and $t_{\nu}(y)$ proposal. (a)-(c): Boxplots of $\mu_{n,IMCV}$ based on 20 repetitions for different values of $\sigma^2$ and $\nu$.  (b)-(d): The logarithm of VRFs and corresponding theoretical bounds for different values of $\sigma^2$ and $\nu$.}
\label{fig:1dexp}

\end{figure}

\subsubsection{Bayesian model selection in non-conjugate linear regression}\label{sec:le}
We consider a standard linear regression $y = \boldsymbol{X} \beta + \epsilon$ where  $y =  (y_1,y_2,\ldots,y_N)$ is a column vector of observations, $\boldsymbol{X}$ is an $N \times p$ design matrix, $\beta =  (\beta_1,\beta_2,\ldots,\beta_p)$ is a column vector of regression coefficients and $\epsilon$ is an error vector distributed as $\epsilon \sim N(0, \sigma^2 \boldsymbol{I_N})$.   The Bayesian approach to model selection  requires the calculation of posterior model probabilities $f(m | y) \propto f(m)f( y | m)$
for all models $m$ specified by all $\beta_m$, the parameter vectors with elements the $2^p$ subsets of the set of $p$ elements of $\beta$, and the corresponding design matrices $\boldsymbol{X}_m$.  In turn, this requires a specification of prior model probabilities $f(m)$ and the calculation of the marginal likelihood $f( y | m)$ of model $m$ which is given by
\begin{equation}
    f(y | m) = \int f(y | m, \beta_m) 
    f(\beta_m | m) d \beta_m \label{Marginal likelihood}
\end{equation}
where $f(y | m, \beta_m)$ and $f(\beta_m | m)$
are the likelihood functions and parameter prior densities of model $m$.  
It is well-known that the choice of the prior $f(\beta_m | m)$ is of paramount importance in calculating posterior model probabilities and a careful specification is both necessary and an issue that has attracted a lot of interest in the literature; see, for example, \cite{dellaportas2012joint}.  Perhaps the most popular and widely accepted default choice is the mixture of g-priors of \cite{liang2008mixtures}. An example of these priors that has been suggested by \cite{liang2008mixtures} is given by 
\begin{equation}
    \beta_m | g, m \sim \mathcal{N}(0,
    g(\boldsymbol{X}_m^\top \boldsymbol{X}_m)^{-1}),~~
    p(g) = (1+g)^{-2}, ~~g>0.
\label{prior}
\end{equation}
This and all other choices of mixtures of $g$-priors result in making integration of (\ref{Marginal likelihood}) intractable.  This is a typical situation in Bayesian model choice literature where huge efforts have been made to propose ways to approximate such intractable marginal likelihoods.

It has often been noted (see for example \cite{newton1994approximate}) that an obvious solution would be to adopt a CMC estimator by sampling from the prior and estimating (\ref{Marginal likelihood}) as the expected likelihood function.  However, since the posterior is often concentrated relative to the prior, Monte Carlo estimators will be very inefficient; see \cite{mcculloch1992bayes} for a discussion. Although many strategies for increasing the efficiency of the CMC are available, it seems that there is nothing available to aid the estimation of the marginal likelihood of a linear regression model with the possibly most popular default prior density.  Note that we cannot, at least readily, find functions of $\beta_m$ such that their analytical expectations with respect to the prior (\ref{prior}) are available, so is not easy to construct control variables to reduce the variance of  the CMC estimator.

We test our $\mu_{n,IMCV}$ estimator by estimating (\ref{Marginal likelihood}) in synthetic data where there is a clear prior-posterior conflict.  We generate $N=50$ 
data points with predictors $X_i \sim \mathcal{N}(0,1)$, $i = 1,\ldots,4$ and response $Y \sim \mathcal{N}(4X_3 + 4X_4, 2.5^2)$.  The $IM(P,\pi,q)$ algorithm  with
$\pi$ being the prior (\ref{prior}) requires a proposal density $q$.  Our estimator  $\mu_{n,IMCV}$ performs well when this proposal is both close to the target density (\ref{prior}) and the expectation $\E_q[f(y | m,\beta_m)]$ is analytically available.  We choose to use a proposal of the form 
\begin{equation}\label{mixture}
    q(\beta_m) = \sum_{i=1}^K w_i\mathcal{N}(\beta_m | 0, g_i (\boldsymbol{X}_m^\top \boldsymbol{X}_m)^{-1}), \text{ such that }\sum_{i=1}^K w_i=1,~ 0<w_i<1.
\end{equation}
For fixed $K$ and model $m$ the parameters $\left\{w_i, g_i\right\}_{i=1}^K$ can be estimated by an Expectation-Maximisation(EM) algorithm as follows. Let $\boldsymbol{I}_{(m)}$ be an identity matrix with dimension equal to the dimension of $\beta_m$. We  fit a $\sum_{i=1}^{K}w_i \mathcal{N}(0,g_{i} \boldsymbol{I}_{(m)})$ model based on 1000 data points generated from the generic prior
\begin{equation}\label{identitytar}
    \beta_m | g, m \sim \mathcal{N}( 0,
    g \boldsymbol{I}_{(m)}),~~
    p(g) = (1+g)^{-2}, ~~g>0
\end{equation}
and we obtain estimates ${\hat{w}}_i$ and ${\hat{g}}_i$. Note that this algorithm is fitted only once and converges rapidly because all normal components have zero means.
Then we sample $\beta^{'}$ from the density 
\begin{equation}\label{identitypro}
    q(\beta')=\sum_{i=1}^{K}{\hat{w}}_i \mathcal{N}(\beta^{'} | 0,{\hat{g}}_{i} \boldsymbol{I}_{(m)})
\end{equation}
and we exploit the invariance of the KL divergence under linear transformation to obtain $\beta_m = (\boldsymbol{X}_m^\top \boldsymbol{X}_m)^{-\frac{1}{2}}\beta^{'}$ as samples generated from the density \eqref{mixture} that matches very well \eqref{prior}.

We chose $K=4$ by visual inspecting a Q-Q plot of the samples of 
(\ref{identitytar}) and (\ref{identitypro}).  The $IM(P,\pi,q)$ algorithm ran for $1000$ iterations and compared with the corresponding CMC through VRFs based on $100$ independent repetitions of the experiment.  Results for all $15$ models are shown in Table \ref{tab:Marginal} where it is evident that our estimator approximates well the marginal likelihood in all models.  For comparative purposes we have included precise estimations of the marginal likelihoods obtained by first integrating $\beta_m$ and then performing  numerical integration over $g$. Notice that the difference of marginal likelihoods in $\log_{10}$-scale is used in \cite{kass1995bayes} as a way to interpret the weighted evidence against a null hypothesis with Bayes factors. 
It is also clear that as the parameter dimension increases, the variance of the CMC estimator and consequently the size of the variance reduction factor increase.

\begin{table}
\caption{Marginal Likelihood estimation for different models $X_m$ where $m$ is specified by which covariates are in the model.  Results are reported as negative log-estimates and compared with a numerical integration estimation.  VRF denotes the variance reduction factor.\label{tab:Marginal}}
\centering
\begin{tabular*}{\columnwidth}{@{\extracolsep\fill}ccccc@{\extracolsep\fill}}
\hline
Model & -$\log_{10}(\mu_{1000,IMCV})$ & -$\log_{10}(\mu_{1000,MC^*})$ & Numerical integration & VRF \\
\hline
$X_{1}$ & 93.19 & 93.19 & 93.19 & 4.29 \\
$X_{2}$ & 93.77 & 93.77 & 93.77 & 3.74 \\
$X_{3}$ & 77.23 & 77.08 & 77.16 & 8.71 \\
$X_{4}$ & 72.78 & 72.80 & 72.77 & 6.30 \\
$X_{12}$ & 93.35 & 93.34 & 93.35 & 5.01 \\
$X_{13}$ & 77.95 & 77.81 & 77.94 & 8.10 \\
$X_{14}$ & 73.52 & 73.66 & 73.51 & 4.02 \\
$X_{23}$ & 77.82 & 77.66 & 77.83 & 5.39 \\
$X_{24}$ & 73.57 & 73.36 & 73.55 & 16.74 \\
$X_{34}$ & 55.31 & 55.08 & 55.29 & 37.25 \\
$X_{123}$ & 78.53 & 78.30 & 78.53 & 30.34 \\
$X_{124}$ & 74.20 & 73.97 & 74.23 & 25.36 \\
$X_{134}$ & 55.39 & 55.49 & 55.34 & 49.12 \\
$X_{234}$ & 56.14 & 55.56 & 56.00 & 106.15 \\
$X_{1234}$ & 55.91 & 57.71 & 55.93 & 4763.42 \\ 
\hline
\end{tabular*}
\end{table}

\section{Adaptive independent Metropolis}
\label{sec: aim}

In general, we consider the sequence of independent Metropolis adaptations $\{IM(P_i,\pi,q_{\theta_i})\}_{i=1}^{\infty}$ identified by the proposal densities $\{q_{\theta_i}\}_{i=1}^{\infty}$, indexed by corresponding parameter vectors $\theta_i \in \Theta$. For each step $i$, we have an update function $H_{\theta_i}(x)$ and its stepsize $\alpha_i$. Then, the adaptive independent Metropolis  can be implemented as shown in Algorithm \ref{alg:adp}.

\begin{algorithm}[ht]
\caption{General adaptive independent Metropolis}\label{alg:adp}
\textbf{Input}: Parameterised proposal distribution $q_\theta$, (intractable) target distribution $\pi$, update function $H_\theta(x)$, stepsize $\alpha$, total number of adaptation iterations $T$.\\
\textbf{Initialisation} Set $i \leftarrow 0$ and initialise \{$X$, $q_{\theta}$, $H_{\theta}(x)$, $\alpha$\} by \{$X_0$, $q_{\theta_0}$, $H_{\theta_0}(X_0)$, $\alpha_0$\}.
\begin{algorithmic}[1]
\While{$i \leq T$}
 \State Save sample $(X_i, Y_i)$ from independent Metropolis $IM(P_i,\pi,q_{\theta_i})$.
 \State Adaptation update $\theta_{i+1} \leftarrow \theta_i - \alpha_i H_{\theta_i}(Y_i)$.
 \State Set $i \leftarrow i+1$.
\EndWhile
\end{algorithmic}
\textbf{Return}: Proposals and samples $\{q_{\theta_i}, X_i ,Y_i\}_{i=1}^{T}$.
\end{algorithm}

The ergodicity of the Markov chain is guaranteed by assumptions on the specific procedure of adaptation. Throughout this Section we will be assuming that all independent Metrpolis sampling chains have a state space $\X$ and satisfy the following Assumption:
\begin{assumption}[A4]\label{asu4}
    The parameters of the proposal density converge through the long run of Algorithm \ref{alg:adp}, i.e. $\lim_{i \rightarrow \infty} \theta_i = \theta^{\star}$ in probability, for some $\theta^{\star}\in \Theta$.
\end{assumption}
Moreover, in order to explore the properties of the adaptive MCMC, we need to rewrite (A\ref{asu2}) and (A\ref{asu3}) into  their adaptation version:

\begin{assumption}[A5]\label{asu21}
    The Markov chain produced by the $IM(P_i,\pi, q_{\theta_i})$ sampler is $\psi$-irreducible and aperiodic with unique invariant measure $\pi$ for all $i = 1,2,\ldots$.
\end{assumption}
\begin{assumption}[A6]\label{asu31}
 There exists a constant $M\geq 1$, such that $w^{\star}_i := \mathop{\text{Sup}}\limits_{x\in \X}(\pi(x)/q_{\theta_i}(x))\leq M$ for all $\theta_i \in \Theta$.
\end{assumption}
\subsection{Adaptation based on KL divergence}
 Theorem \ref{thmbound} and the numerical illustrations of the previous Section indicate that the key element of a good estimator of an $IM(P,\pi,q)$ sampler is the ability to obtain a good proposal density $q$ that is as close as possible, in terms of KL divergence, to the target $\pi$.  Therefore, in this Section we define the update function $H_{\theta}$ in
 Algorithm \ref{alg:adp} 
 as a gradient direction towards minimising the KL divergence $\mathbb{KL}(q_{\theta}(x) \vert \vert \pi(x))$. To further justify this choice, we present the following result from  Theorem \ref{thmbound}.
\begin{corollary}(Proof in Section \ref{corp1} of the supplementary material)\label{cor1}
Suppose there exists a sequence of constants $\{\epsilon_i\}_{i=1}^{\infty}$ and a sequence of proposals $\{q_{\theta_i}\}_{i=1}^{\infty}$ such that $\mathbb{KL}(q_{\theta_i}(x) \vert \vert \pi(x)) \leq \epsilon_i$ for every $i$, and 
$\lim_{i \rightarrow \infty} \epsilon_i = 0$. Under (A\ref{asu2}), (A\ref{asu21}) and (A\ref{asu31}) , define $\sigma^2_{IMCV,(i)}$ to be the asymptotic variance of \eqref{imcv} with $IM(P_i, \pi, q_{\theta_i})$ sampler. Then
$$
    \lim_{i \rightarrow \infty} \sigma^2_{IMCV,(i)} = 0.
$$
\end{corollary}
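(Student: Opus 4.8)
The plan is to deduce Corollary \ref{cor1} from a \emph{quantitative} refinement of Theorem \ref{thmbound}. As stated, Theorem \ref{thmbound} only gives a threshold $\epsilon$ below which $\sigma^2_{IMCV}\le\sigma_F^2$; what its proof actually delivers (and what we need here) is a nondecreasing function $\Psi:[0,\infty)\to[0,\infty)$, continuous at $0$ with $\Psi(0)=0$, such that $\sigma^2_{IMCV}\le\Psi\big(\mathbb{KL}(q\Vert\pi)\big)$, whose constants depend on the proposal $q$ only through $w^{\star}$ and through fixed moments of the majorant $W$ of (A\ref{asu2}). Given such a bound that is \emph{uniform} along the adaptive sequence, the corollary is immediate: $\sigma^2_{IMCV,(i)}\le\Psi\big(\mathbb{KL}(q_{\theta_i}\Vert\pi)\big)\le\Psi(\epsilon_i)$ for every $i$, and $\epsilon_i\to0$ together with continuity of $\Psi$ at $0$ forces $\sigma^2_{IMCV,(i)}\to0$. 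So the work is to exhibit $\Psi$ and to argue that its constants can be taken uniform in $i$.

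For the bound, write the summand of \eqref{imcv} as $g(x,y):=F(x)+\alpha(x,y)(F(y)-F(x))-(F(y)-\mathbb{E}_q[F])$; a one-line rearrangement gives
\[
g(x,y)-\mathbb{E}_\pi[F]=\big(\alpha(x,y)-1\big)\big(F(y)-F(x)\big)+\big(\mathbb{E}_q[F]-\mathbb{E}_\pi[F]\big),
\]
so the summand is exactly centred at $\mathbb{E}_\pi[F]$ and vanishes identically when $q=\pi$ (then $\alpha\equiv1$). Both fluctuation terms are controlled by the KL divergence: from $\pi(x)q(y)\alpha(x,y)=\min\{\pi(x)q(y),\,\pi(y)q(x)\}$ one obtains the identity $\mathbb{E}_{\pi\otimes q}[1-\alpha(X,Y)]=\Vert\pi\otimes q-q\otimes\pi\Vert_{TV}\le2\Vert q-\pi\Vert_{TV}\le\sqrt{2\,\mathbb{KL}(q\Vert\pi)}$ by Pinsker; combining $0\le1-\alpha\le1$ with Cauchy--Schwarz and the integrability $F\in L^W_\infty$, $W\in L_4^{\pi,q}$ yields $\mathbb{E}_{\pi\otimes q}\big[(\alpha(X,Y)-1)^2(F(Y)-F(X))^2\big]\le C\,\mathbb{KL}(q\Vert\pi)^{1/4}$; and $|\mathbb{E}_q[F]-\mathbb{E}_\pi[F]|^2\le\Vert F\Vert_W^2(\mathbb{E}_q[W^2]+\mathbb{E}_\pi[W^2])\int|q-\pi|\le C'\,\mathbb{KL}(q\Vert\pi)^{1/2}$. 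Hence $\mathbb{E}_{\pi\otimes q}\big[(g-\mathbb{E}_\pi[F])^2\big]\le\phi\big(\mathbb{KL}(q\Vert\pi)\big)$ with $\phi(t)\to0$ as $t\to0$ and $\phi$'s constants built only from $\Vert F\Vert_W$ and fourth moments of $W$.

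To pass from this second-moment bound to $\sigma^2_{IMCV}$, note that the augmented process $(X_i,Y_i)$ is a Markov chain whose $X$-marginal is the $IM(P,\pi,q)$ chain, which under (A\ref{asu1}),(A\ref{asu3}) is uniformly ergodic with $\Vert P^n(x,\cdot)-\pi\Vert_{TV}\le(1-1/w^{\star})^n$; the standard bound for the asymptotic variance of a uniformly ergodic chain in terms of the $L^2(\pi\otimes q)$-norm of the centred function $g-\mathbb{E}_\pi[F]$ then gives $\sigma^2_{IMCV}\le C_2(w^{\star})\,\mathbb{E}_{\pi\otimes q}\big[(g-\mathbb{E}_\pi[F])^2\big]$, i.e.\ $\Psi=C_2(\cdot)\,\phi$. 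Applying this to each $q_{\theta_i}$: (A\ref{asu21}) makes every $IM(P_i,\pi,q_{\theta_i})$ $\psi$-irreducible and aperiodic, (A\ref{asu31}) gives $w^{\star}_i\le M$ so that $C_2(w^{\star}_i)\le C_2(M)$ for all $i$, and $\Vert F\Vert_W$ is fixed by (A\ref{asu2}); thus $\sigma^2_{IMCV,(i)}\le\Psi(\epsilon_i)$, and $i\to\infty$ concludes.

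The delicate point — and the one place where the argument is not mere bookkeeping — is the uniformity in $i$ of the moments of $W$ entering $\phi$, namely $\sup_i\int W^4 q_{\theta_i}\,dx$ and $\sup_i\int W^2 q_{\theta_i}\,dx$: the proposals vary, and (A\ref{asu31}) only bounds $\pi/q_{\theta_i}$ from above, which does not by itself control the tails of $q_{\theta_i}$. This is exactly what the adaptive reading of (A\ref{asu2}) (i.e.\ $W\in L_4^{\pi,q_{\theta_i}}$ with a uniform bound) supplies; alternatively, $\mathbb{KL}(q_{\theta_i}\Vert\pi)\to0$ together with a mild uniform-integrability hypothesis (e.g.\ a uniform bound on $\int W^{4+\delta}q_{\theta_i}\,dx$) forces $\int W^4 q_{\theta_i}\,dx\to\int W^4\pi\,dx<\infty$, which again gives the needed uniform control for large $i$. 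Everything else — assembling $\Psi$, checking $\Psi(0^+)=0$, and passing to the limit — is routine.
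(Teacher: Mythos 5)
Your proposal is correct and it reaches the corollary by the same overall strategy as the paper: a quantitative bound $\sigma^2_{IMCV}\leq \Psi\big(\mathbb{KL}(q\Vert\pi)\big)$ with $\Psi$ vanishing at $0$ and constants uniform along the adaptive sequence, followed by taking the limit $\epsilon_i\to 0$. Where you differ is in how the bound is built. The paper's proof of Corollary \ref{cor1} is a one-liner that reuses inequality \eqref{fbound2} from the proof of Theorem \ref{thmbound}, $\sigma^2_{IMCV,(i)}\leq \tilde{C}(\epsilon_i)\,\epsilon_i^{1/4}$, whose derivation rests on Lemma \ref{lem2} and hence on the exact $n$-step IM transition kernel of Lemma \ref{lem1}; you instead invoke uniform ergodicity of the IM chain (spectral gap at least $1/w^{\star}$) together with a generic bound of the asymptotic variance by the stationary second moment of the centred summand, which is essentially the alternative spectral route the paper itself records in the remark containing \eqref{spf}. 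Your control of the two fluctuation terms by the KL divergence --- the identity $\mathbb{E}_{\pi\otimes q}[1-\alpha]=\mathbb{TV}(\pi\otimes q\,\Vert\, q\otimes\pi)\leq\sqrt{2\,\mathbb{KL}(q\Vert\pi)}$ combined with $(1-\alpha)^4\leq 1-\alpha$ and Cauchy--Schwarz --- is a somewhat cleaner packaging of the paper's Lemma \ref{lem3} and Remark \ref{rmk:gen} computations and recovers the same $\epsilon^{1/4}$ rate. Two points deserve mention. First, the ``standard bound'' you cite for the augmented chain $(X_i,Y_i)$ needs one extra line, because the pair chain is not reversible: reduce the lag-$k$ covariances to the reversible marginal IM chain via $\bar g(x)=\int g(x,y)q(y)\,dy$ (conditioning on the fresh proposal) and then use the marginal chain's spectral gap, which is at least $1/w^{\star}\geq 1/M$ under (A\ref{asu31}); with that step your constant $C_2$ indeed depends only on $M$, as claimed. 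Second, your insistence on uniformity in $i$ of the moments of $W$ entering the bound is a point the paper's own proof passes over silently (it treats $C_0,C_1,C_2$ as fixed when letting $\tilde{C}(\epsilon_i)\epsilon_i^{1/4}\to 0$), so explicitly requiring the uniform reading of (A\ref{asu2}) along the sequence $\{q_{\theta_i}\}$ is a small but genuine gain in rigour over the published argument.
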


Corollary \ref{cor1} indicates that if we obtain a sequence of adaptations 
$\{IM(P_i,\pi,q_{\theta_i})\}_{i=1}^{\infty}$  such that the proposal densities approach the target density as $i$ increases, we can achieve zero variance estimators. The requirement is the adaptation to satisfy 
$$
\mathbb{KL}(q_{\theta_i}(x) \vert \vert \pi(x)) = \int q_{\theta_i}(x) \log \frac{q_{\theta_i}(x)}{\pi(x)}dx \leq \epsilon_i
$$
for a sequence of $\{\epsilon_i \}_{i=1}^{\infty} $ that converges to zero. 

In practice, a way to obtain 
the sequence of the adapted proposal distributions 
$q_{\theta_i}$ is by applying 
gradient descent updates 
on the parameters in order to minimise the above KL divergence. The gradient descent update takes the form 
\begin{align}\label{KL-update}
&\theta_{i+1} = \theta_i - \alpha_i \nabla_{\theta_i} \mathbb{KL}(q_{\theta_i}(x)||\pi(x)), \ \text{where } \nabla_{\theta_i} \E(q_{\theta_i}||\pi) = \nabla_{\theta_i}\E_{q_{\theta_i}} \log \frac{q_{\theta_i}(x)}{\pi(x)}
\end{align}
and $\alpha_i>0$ is a step size parameter. This exact optimisation procedure 
is not feasible since the gradient is not available in  closed-form, and instead we can apply stochastic optimisation \citep{robbinsmonro51} where we use  
unbiased estimates of the gradients. More precisely, we can follow the recent literature in machine learning where such stochastic gradient optimisation methods 
are widely used for 
KL-based variational inference   \citep{Ranganath14, Titsias2014DoublyInference, Kingma2014, Kucukelbir17}.
We will assume that the independent Metropolis proposal 
$q_\theta(x)$ is reparametrisable 
from a simpler distribution $p(z)$, which enables the use of efficient 
reparametrisation gradient
methods \citep{Titsias2014DoublyInference, Rezende2014, Kingma2014, Gianniotis2015, Salimans_2013, Roeder17, Tan18}. For instance, for  the standard multivariate Gaussian proposal $q_\theta(x) = \mathcal{N}(\mu, \boldsymbol{\Sigma}) = \mathcal{N}(\mu, \boldsymbol{L} \boldsymbol{L}^\top)$ where $\theta = (\mu, \boldsymbol{L})$ and $\boldsymbol{L}$ is the Cholesky factor of the covariance matrix, we can reparametrise $x \sim \mathcal{N}(\mu, \boldsymbol{L} \boldsymbol{L}^\top)$ as $\mu + \boldsymbol{L} z, z \sim \mathcal{N}(0,\boldsymbol{I}_d)$, and then consider
as the update function $H_{\theta}(Y)$ in Algorithm \ref{alg:adp}
the unbiased gradient 
\begin{equation}
H_{\theta}(Y) = \nabla_{\theta}\E_{q_{\theta} (x)} [\log q_{\theta}(x)]
- \nabla_\theta \log \pi( Y=\mu + \bs{L} z), \ \  z \sim \mathcal{N}(0, \boldsymbol{I}_d),
\label{eq:unbiasedgradient}
\end{equation}
%as used by the algorithm of \cite{Titsias2014DoublyInference}
where $\E_{q_{\theta} (x)} [\log q_{\theta}(
x)]$ is the negative entropy of the Gaussian distribution.
The simplest form of adaptation is to apply stochastic optimisation updates at each iteration of independent Metropolis, i.e.\ 
 similarly to standard adaptive MCMC   \citep{Haario2001AnAlgorithm, Roberts2009ExamplesMCMC, Andrieu:2008}. To perform the adaptation step at the $i$-th sampling iteration we use the proposed state $Y_i = \mu_i + \bs{L}_i z_i \sim q_{\theta_i}$ to evaluate the 
unbiased gradient in  \eqref{eq:unbiasedgradient}, and then  perform a gradient step to obtain the new parameters $\theta_{i+1}$ by applying the update step in Algorithm \ref{alg:adp} (line 3). 

Further implementation details of the above unbiased gradient estimation procedure towards minimising the KL divergence are given in Section \ref{sec:adp-algo} of the supplementary material.  

\subsection{Theory and implementation of the adaptive independent Metropolis}

\subsubsection{Sample collection after adaptation}
\label{subopt}
A trivial sub-optimal way to implement the general adaptation  Algorithm \ref{alg:adp} is to adapt $q_{\theta}$ until it reaches a final proposal density $q_{\theta_{T}}$ and ignore the samples generated by the adaptations  $\{ IM(P_i,\pi,q_{\theta_i}) \}_{i=1}^{T-1}$.   Then, the ergodic estimator (\ref{imcv}) can be evaluated by using the samples from  the non-adaptive algorithm $IM(P_T,\pi,q_{\theta_{T}})$ and the Proposition \ref{prop1} provides the necessary theoretical guarantees for such scheme. See Section \ref{sec:adaptiveNumerical} for the experimental results.

\subsubsection{Sample collection during adaptation}\label{sec:opt} 

Clearly the previous sampling scheme of subsection \ref{subopt} can be inefficient since it produces an estimator that ignores all samples from the adaptation phase. In this section, we wish to construct estimators that make use of all samples, and to that end we will study the properties of the whole sequence of the independent Metropolis adaptations $\{IM(P_i,\pi,q_{\theta_i})\}_{i=1}^{\infty}$
and the corresponding sequence of generated  samples.

Laws of large numbers (LLN) and central limit theorems (CLT) have been proved for various cases of $\{IM(P_i,\pi,q_{\theta_i})\}_{i=1}^{\infty}$ sequences, see \citet{Andrieu2006OnAlgorithms} and \citet{Roberts2007CouplingMCMC}.  However, some of the conditions required for these proofs are hard to verify in practice, see for example \citet{Andrieu2006OnAlgorithms}. \citet{Roberts2007CouplingMCMC} prove a LLN that assumes that  the function $F$ is bounded.  
We propose an adaptive $\{IM(P_i,\pi,q_{\theta_i})\}_{i=1}^{\infty}$ algorithm in which adaptation takes place every $B$ iterations.  This has two key advantanges. First, the update (\ref{KL-update}) is based on $B$ samples so the estimate \eqref{eq:unbiasedgradient} is based on $B$ samplers. Second, our estimator is now based on $\ell$ batches of size $B$, $n= \ell B$, and this allows us to prove a LLN relaxing the assumption of boundness of $F$. 

Assume that we collect samples $\left \{ \left\{(X_{ij}, Y_{ij}) \right\}_{j=1}^B \right\}_{i=1}^\ell$ 
where $X_{ij}$ denote the accepted samples and $Y_{ij}$ denote the corresponding proposal values from the proposal densities $\{ q_{\theta_{i}} \}_{i=1}^\ell$. Our estimator is  
\begin{equation}\label{batchest}
    \mu_{\ell,B,IMCV} = \frac{1}{\ell B}\sum_{i = 1}^{\ell}  \sum_{j = 1}^{B} \left\{F(X_{ij}) + \left[\alpha(X_{ij}, Y_{ij})(F(X_{ij}) -  F(Y_{ij})) -
    (F(Y_{ij}) - \E_{q_{\theta_{i}}}[F(y)])\right]  \right\}
\end{equation}
A more elaborate version of our estimator with a couple of coefficients to minimise the variance can be found in Section \ref{sec:coef} of the supplementary material.

\begin{algorithm}[ht]
\caption{Batch Adaptive Independent Metropolis with control variate}\label{alg:BAIM}
\textbf{Inputs}: Parameterised proposal distribution $q_\theta$, (intractable) target distribution $\pi$, update function $H_\theta(x)$, stepsize $\alpha$, batch size $B$, the number of batches $\ell$, objective function $F$.
\\
\textbf{Initialisation} Set $i \leftarrow 0, j \leftarrow 0$ and initialise \{$X$, $q_{\theta}$, $H_{\theta}(x)$, $\alpha$\} by \{$X_0$, $q_{\theta_0}$, $H_{\theta_0}(X_0)$, $\alpha_0$\}.
\begin{algorithmic}[1]
        \While{$i < \ell$}
        \While{$j < B$}
        \State Save $\{\alpha(X_{ij},Y_{ij}), X_{ij}, Y_{ij}\}$ from independent Metropolis $IM(P_i,\pi,q_{\theta_i})$.
        \State Set $j \leftarrow j + 1$.
        \EndWhile
        \State Adaptation update $\theta_{i+1} \leftarrow \theta_i - \alpha_i \sum_{j=1}^{B}H_{\theta_i}(Y_{ij})/B$.
        \State Calculate analytical result for the batch $E_{q_{\theta_i}}[F]$.
        \State Update $i \leftarrow i + 1, j \leftarrow 0.$
        \EndWhile

    \item Calculate the estimator \eqref{batchest}.
\end{algorithmic}
\textbf{Returns}: Estimator \eqref{batchest}.
\end{algorithm} 

The resulting Algorithm \ref{alg:BAIM} describes our adaptive independent Metropolis scheme and the following Theorem gives rigorous theoretical arguments.

\begin{theorem}(Proof in Section \ref{thmp3} of the supplementary material)\label{thmconv}
    Under (A\ref{asu2}), (A\ref{asu4}), (A\ref{asu21}) and (A\ref{asu31}) :
    \begin{enumerate}
        \item $(Ergodicity)$ The Markov chain generated by Algorithm \ref{alg:BAIM} is ergodic.
        \item $(LLN)$ 
        $
            \lim_{\ell \rightarrow \infty, B\rightarrow \infty} \mu_{\ell,B,IMCV} \rightarrow \E_{\pi}[F]
        $
        \item $(CLT)$ $\{\mu_{\ell,B,IMCV} - \E_{\pi}[\mu_{\ell,B,IMCV}]\} / \sqrt{n} \xrightarrow{d} \mathcal{N}(0, \sigma^2(\mu_{\ell,B,IMCV}))$ as $\ell,B \rightarrow \infty$ if $0<\sigma^2(\mu_{\ell,B,IMCV})<\infty$.
    \end{enumerate}
\end{theorem}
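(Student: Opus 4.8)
The plan is to prove the three claims in sequence, the engine throughout being that Assumption (A\ref{asu31}) gives a \emph{simultaneous} uniform-ergodicity bound for the entire family of kernels. For independent Metropolis the classical geometric bound (see e.g. Chapter 16 of \cite{Meyn2009MarkovEdition}) gives $\|P_i^n(x,\cdot)-\pi\|_{TV}\le(1-1/w^\star_i)^n\le(1-1/M)^n$, so every $P_i$ is uniformly ergodic with the \emph{same} rate $\rho:=1-1/M$; moreover this bound upgrades to the $W$-norm using the explicit IM structure and the integrability of Assumption (A\ref{asu2}). Combined with diminishing adaptation — which follows from (A\ref{asu4}), since $\theta_i\to\theta^\star$ forces $\theta_{i+1}-\theta_i\to0$ and $q_\theta$, hence $P_\theta$, is continuous in $\theta$ — the containment plus diminishing-adaptation criterion of \citet{Roberts2007CouplingMCMC} yields ergodicity of the chain produced by Algorithm \ref{alg:BAIM}, which is claim (1).

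For the LLN I would write $\mu_{\ell,B,IMCV}-\E_\pi[F]=\frac1\ell\sum_{i=1}^\ell(\bar Z_i-\E_\pi[F])$, where $\bar Z_i$ is the $i$-th batch average of the per-sample summand $\Phi_{\theta_i}(X_{ij},Y_{ij}):=F(X_{ij})+\alpha(X_{ij},Y_{ij})(F(X_{ij})-F(Y_{ij}))-(F(Y_{ij})-\E_{q_{\theta_i}}[F])$. The key observation is that the two control-variate pieces are $\pi$-mean zero for \emph{every} proposal: $\int\alpha(x,y)(F(x)-F(y))q_\theta(y)\,dy=F(x)-P_\theta F(x)$ integrates to $0$ against $\pi$, and $\E_{q_\theta}[F(Y)-\E_{q_\theta}F]=0$, so the ``target'' of each batch is $\E_\pi[F]$ irrespective of the (random) value $\theta_i$. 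I would then bound $\E[(\bar Z_i-\E_\pi[F])^2\mid\mathcal{F}_{(i-1)B}]$ by a quantity $\delta(B)\to0$ that is uniform in $\theta_i$ and in the batch's seed state: the $W$-uniform ergodicity at rate $\rho$ controls the initialisation bias by a term of order $\rho^{B}W(\cdot)$, while $W\in L_4^{\pi,q}$ from (A\ref{asu2}) controls the stationary variance of the batch average by a term of order $1/B$ (the exponent $4$ absorbs $|F|\le cW$ appearing squared in cross terms with $\alpha\le w^\star$). A Cesàro/telescoping argument over the $\ell$ batches then gives $\frac1\ell\sum_i(\bar Z_i-\E_\pi[F])\to0$ as $\ell,B\to\infty$, which is claim (2); note (A\ref{asu4}) is not needed here, only for identifying the limiting variance below.

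For the CLT I would reduce to the non-adaptive CLT of Proposition \ref{prop1}(3) at the limit point $\theta^\star$. Let $\widehat\Phi_{\theta^\star}\in L^W_\infty$ solve the Poisson equation of $IM(P_{\theta^\star},\pi,q_{\theta^\star})$ for the summand $\Phi_{\theta^\star}$ (its existence and $W$-boundedness follow from the upgraded uniform ergodicity and (A\ref{asu2})). Writing $\sqrt n\,(\mu_{\ell,B,IMCV}-\E_\pi[F])=\frac1{\sqrt n}\sum_{i,j}\bigl(\Phi_{\theta_i}(X_{ij},Y_{ij})-\E_\pi[F]\bigr)$ with $n=\ell B$ and inserting the Poisson decomposition for the $\theta^\star$-chain produces a martingale-difference array (in the lexicographic order of $(i,j)$) plus a remainder collecting the discrepancies $P_{\theta_i}-P_{\theta^\star}$ and $\Phi_{\theta_i}-\Phi_{\theta^\star}$. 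I would show this remainder is $o_P(\sqrt n)$ using (A\ref{asu4}) (these discrepancies vanish in probability since $\theta_i\to\theta^\star$ and $q_\theta$ is smooth in $\theta$) together with the uniform $L^2$ control from (A\ref{asu2})--(A\ref{asu31}); and the martingale part satisfies the conditional-variance and Lindeberg conditions because, applying the LLN of claim (2) to the squared summands and using $\theta_i\to\theta^\star$, its normalised quadratic variation converges to $\sigma^2:=\sigma^2_{IMCV}$ evaluated at $\theta^\star$. The martingale CLT then yields claim (3) under the stated proviso $0<\sigma^2<\infty$, after re-centring at $\E[\mu_{\ell,B,IMCV}]$ instead of $\E_\pi[F]$, which is harmless since the bias estimate from claim (2) makes the difference $o(n^{-1/2})$.

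The main obstacle is the joint control, in claims (2) and especially (3), of three coupled error sources: the within-batch non-stationarity, the drift and randomness of $\theta_i$, and the dependence between consecutive batches (the terminal state of batch $i$ both seeds batch $i+1$ and enters $\theta_{i+1}$). The simultaneous uniform ergodicity furnished by (A\ref{asu31}) is precisely what renders all three uniformly controllable, but assembling the remainder bound so that it is $o_P(\sqrt n)$ when $\ell$ and $B$ grow simultaneously — and making the admissible growth regime of $B$ relative to $\ell$ explicit — is the delicate point.
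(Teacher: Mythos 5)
Your argument is sound in outline and reaches the same conclusions, but it is not the paper's route for parts (2) and (3). For ergodicity the two proofs essentially coincide: the paper invokes Theorem 4.1 and Lemma 4.2 of \cite{brofos2022adaptation} to check the diminishing-adaptation-plus-containment conditions of \citet{Roberts2007CouplingMCMC}, while you verify containment directly from (A\ref{asu31}) via the Doeblin bound $P_i(x,\cdot)\geq (1/M)\Pi(\cdot)$, which is equivalent. For the LLN the paper does \emph{not} use your conditional-$L^2$/Ces\`aro argument; it adapts the coupling proof of Theorem 5 of \citet{Roberts2007CouplingMCMC} to the batch chain on $\X^B$: it freezes the kernel over blocks of $N$ adaptation steps (this is where diminishing adaptation, hence (A\ref{asu4}), enters), uses the within-batch LLN to make each batch mean $\mu_{B,i}$ small for large $B$, and concludes with a head/middle/tail split and Markov's inequality, obtaining a weak LLN. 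Your version buys a cleaner statement (no coupling, and you correctly observe that uniform-in-$\theta$ bounds could make (A\ref{asu4}) unnecessary for this part), but note that your uniform bound on batch second moments needs $\E_{q_{\theta}}[F^2]$ controlled uniformly in $\theta$, which does not follow from (A\ref{asu31}) (the inequality $\pi\leq M q_{\theta}$ goes the wrong way); the paper implicitly reads (A\ref{asu2}) as holding for the whole proposal family, and you would have to do the same. For the CLT the divergence is largest: the paper's argument is short and informal --- as $B\to\infty$ each within-batch estimator satisfies the CLT of Proposition \ref{prop1} on $\X^B$, and the average over $\ell$ batches of (asymptotically) Gaussian batch means is then asserted to be Gaussian --- whereas you propose a Poisson-equation/martingale-difference decomposition at the limit point $\theta^{\star}$ with an $o_P(\sqrt{n})$ remainder and a martingale CLT, in the spirit of \citet{Andrieu2006OnAlgorithms}. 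Your route is heavier but more self-contained and makes explicit exactly the delicate interplay (batch non-stationarity, parameter drift, inter-batch dependence) that the paper's two-line CLT argument glosses over; the paper's route is more elementary because it leans entirely on Proposition \ref{prop1} batchwise.
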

\subsection{Numerical illustrations}\label{sec:adaptiveNumerical}
We evaluate the performance of the adaptive independent Metropolis algorithm with synthetic and real data applications.  Given the estimator sample size $n$, our interest lies in comparing: (1) given an converged adaption proposal density $q_{\theta_T}$, see Section \ref{subopt}, the variance of the standard IM estimator $\mu_{n,IM}$ in \eqref{mcmcest}, IMCV estimator $\mu_{n,IMCV}$ in \eqref{imcv}, IMCV estimator with coefficients $\mu_{n,IMCV}^{(\hat{c}_1,\hat{c}_2)}$ in \eqref{imcv_coef}, Rao-Blackwellisation estimator $\mu_{n,RB}$ in \eqref{rbest} and IM with coupling estimator $\mu_{n,CIM}^{(\hat{c})}$ in \eqref{est:cimcoeff}, we set sample size $n=5000$ for this case;  (2) the variance of the standard IM estimator $\mu_{n,IM}$ in \eqref{mcmcest} and the variance of our IMCV estimator $\mu_{\ell,B,IMCV}$ in  \eqref{batchest} where $n=\ell B$ after a certain number of quick adaptation burning-in steps, see Section \ref{sec:opt}.  These comparisons will be based on the ratios of the variance of the standard IM estimator to the other estimators variances estimated by the VRF in exactly the same way as in Section \ref{illustrations}, all VRFs are estimated by 50 independent repetitions.

\subsubsection{Adaptive IM with d-dimentional Gaussian target and Gaussian proposal}\label{sec:d-n}
We consider a zero-mean and identity covariance $d$-dimensional Gaussian target  $\pi(x) = \mathcal{N}(x \vert 0, \bs{I}_d)$ and an adapted Gaussian proposal initialised as $q(x) = \mathcal{N}(x \vert 1_d, \bs{L} \bs{L}^\top)$ where $1_d$ is a vector of ones and the matrix $\bs{L}$
has elements 
$ \bs{L}(i,j) = 1$ if $i \geq j$ and $ \bs{L}(i,j) = 0$ if $i < j$.
We use Algorithm \ref{alg:DSV} for adaptation. Ranges of variance reduction factors for the estimates of the expected values of each coordinate of the target are presented in Table \ref{tab:5} and Table \ref{tab:2}.

\begin{table}
\caption{Sample collection after adaptation: range of VRFs for the coordinate estimates of a $d$-dimensional Gaussian target with Gaussian proposal. AP refers to the average acceptance probability. The numbers in the brackets refer to the average coefficients $\hat{c_1},\hat{c_2}$ and $\hat{c}$ across all repetitions.   \label{tab:5}}
\centering
\begin{tabular*}{\textwidth}{@{\extracolsep\fill}cccccc@{\extracolsep\fill}}
\hline
  &  $\mu_{n,IMCV}$ &  $\mu_{n,IMCV}^{(\hat{c}_1,\hat{c}_2)}$ & $\mu_{n,RB}$ & $\mu_{n,CIM}^{(\hat{c})}$  \\
\hline
$d=5$  & 268.8 - 678.0  & 257.2 - 696.5  & 0.9 - 1.2  & 20.9 - 42.0  \\
AP: 0.98  & -  & (1.020, 0.982) & -  & (1.020)  \\ 
\hline
 $d=10$  & 124.7 - 239.5  & 120.4 - 242.0  & 1.0 - 1.2  & 10.1 - 16.4  \\ 
AP: 0.97  & -  & (1.028, 0.967)  & -  & (1.028)  \\ 
\hline
 $d=20$  & 40.0 - 107.0  & 42.8 - 114.1  & 1.0 - 1.3  & 4.3 - 14.8  \\ 
AP: 0.94  & -  & (1.052, 0.946)  & -  & (1.052)  \\ 
\hline
 $d=50$  & 8.0 - 22.0  & 8.6 - 24.1  & 1.0 - 1.5  & 2.2 - 5.9  \\ 
AP: 0.88 & -  & (1.114, 0.873)  & -  & (1.117)  \\ 
\hline
 $d=100$  & 2.1 - 6.9  & 2.4 - 7.3  & 1.0 - 1.6  & 1.1 - 3.0  \\ 
AP: 0.76  & -  & (1.231, 0.753)  & -  & (1.234)  \\ 
\hline
\end{tabular*}
\end{table}

\begin{table}
\caption{Sample collection during adaptation: range of VRFs for the coordinate estimates of a $d$-dimensional Gaussian target with Gaussian proposal.  Burn-in adaptation updates: $1000$; $B = 50$; $\ell = 100$. \label{tab:2}}
\centering
\begin{tabular*}{\textwidth}{@{\extracolsep\fill}ccccc@{\extracolsep\fill}}
\hline
Dimensions &  $d=10$   & $d=20$   & $d=50$ &  $d = 100$   \\
\hline
VRF  & 41.2 - 87.5  & 20.3 - 41.6  & 6.2 - 12.2  & 1.8 - 6.0 \\
\hline
\end{tabular*}
\end{table}
In Section \ref{apd:nd} of the supplementary material we graphically present the results of Table \ref{tab:2}  and we further investigate the VRF reduction with respect to the dimension $d$. We prove that if the proposal after adaptation can be expressed as a normal density with some perturbations on both mean and covariance, then the VRF is bounded by a function which decreases at a rate of  $O(d^{-1/4})$ as $d$ increases.

\subsubsection{Logistic regression}
\label{sec:log}
We consider binary classification based on binary labels $y= \{y_i\}_{i=1}^N$ and input data $x= \{x_i\}_{i=1}^N$ where $x_i$ are $d$-dimensional input vectors. We assume a  logistic regression log-likelihood 
$p(y \vert x, \beta) = \sum_{i=1}^N \{y_i \log s(x_i,\beta) + (1-y_i) \log (1-s(x_i, \beta))\}$
where $s(x_i, \beta) = 1/(1 + \exp(-\beta^\top x_i))$ with $\beta$ being a $d$-dimensional parameter vector. We place a
$d$-dimensional Gaussian prior  $\mathcal{N}(\beta \vert 0, \boldsymbol{I}_d)$ on $\beta$ and we 
illustrate the performance of the adaptive independent Metropolis algorithm equipped with our control variate estimators in two datasets that have been commonly used in MCMC applications, see e.g.\ \citet{Girolami2011RiemannMethods} and \citet{Titsias2019Gradient-basedCarlo}.  See Table \ref{table:info} for the names of the datasets and details on the specific samples sizes and dimensions.
\begin{table}
\caption{Summary of datasets for logistic regression.
\label{table:info}}
\begin{tabular*}
{\textwidth}{@{\extracolsep\fill}ccccc@{\extracolsep\fill}}
\hline
Dataset & Ripley & Pima Indian  & Heart & German  \\
\hline
$d$ & 3 & 8 & 14 & 25  \\
$N$ & 250 & 532 & 270 & 1000 \\
\hline
\end{tabular*}
\end{table}

We estimate the  posterior expected values of all parameters $\beta$ and of the  odd ratio $r(\overline{x}, \beta) = \exp (\beta^\top  \overline{x})$
where $\overline{x}$ is the average input vector.  The  proposal distribution is $q_{\mu,\bs{L}}(\beta) = \mathcal{N}(\beta \vert \mu, \bs{L} \bs{L}^\top)$ with $\mu$ and $\bs{L}$ initially being a zero vector and the identity matrix respectively and being adapted with Algorithm \ref{alg:sl} in Section \ref{sec:adp-algo} of the supplementary material.  Note that the required analytical expectations with respect to the proposal density are readily available, for example 
$
\E_{q_{\mu,\bs{L}}}[r(\overline{x}, \beta)] = \exp (\mu^\top\overline{x} + \frac{1}{2}\overline{x}^\top \bs{L} \bs{L}^\top \overline{x} ).$
The variance reduction factors of estimators against the standard IM estimator $\mu_{n,IM}$ are shown in Table \ref{t:logreg2} and Table \ref{t:logreg}.
The Ripley dataset achieves the best VRFs because the lower dimension of the posterior implies a better approximation of the proposal to the target.  Moreover, as the batch and sample sizes increase, we obtain higher variance reduction.

\begin{table}
\caption{Sample collection after adaptation: range of VRFs (computed by the min and max across all dimensions of $\beta$) for the parameters of Logistic Regression datasets.  $F(\beta)=\beta$: posterior means;
$F(\beta)=\beta^2$: posterior second moments; $F(\beta)=r(\overline{x}, \beta)$ (posterior odds).
\label{t:logreg2}}
\centering
\begin{tabular*}{\textwidth}{@{\extracolsep\fill}ccccccc@{\extracolsep\fill}}
\hline
   & $F(\beta)$ & $\mu_{n,IMCV}$ &  $\mu_{n,IMCV}^{(\hat{c}_1,\hat{c}_2)}$ & $\mu_{n,RB}$ & $\mu_{n,CIM}^{(\hat{c})}$  \\
\hline

\multirow{6}{*}{\shortstack{Ripley \\ AP: 0.97}} & \multirow{2}{*}{$\beta$} & 46.5 - 65.6 & 47.6 - 66.9 & 1.1 - 1.2 & 9.6 - 13.7  \\
 &  & - & (1.046, 0.961) & - & (1.047)  \\ 
 & \multirow{2}{*}{$\beta^2$} & 39.7 - 54.0 & 41.8 - 55.6 & 1.1 - 1.3 & 7.3 - 12.3  \\
 & & - & (1.056, 0.965) & - & (1.056)\\
  & \multirow{2}{*}{$r(\overline{x}, \beta)$} & 71.4 & 74.4 & 1.2 & 12.1 \\
 & & - & (1.040, 0.964) & - & (1.043)\\
 
\hline

\multirow{6}{*}{\shortstack{Pima \\ AP: 0.89}} & \multirow{2}{*}{$\beta$}  & 8.4 - 20.0 & 9.5 - 22.1 & 1.0 - 1.4 & 3.5 - 7.1 \\ 
 &   & - & (1.093, 0.886) & - & (1.094)  \\ 
  & \multirow{2}{*}{$\beta^2$} & 6.3 - 16.0 & 9.8 - 18.3 & 1.1 - 1.3 & 2.6 - 5.2  \\
 & & - & (1.101, 0.871) & - & (1.102)\\
  & \multirow{2}{*}{$r(\overline{x}, \beta)$} & 14.1 & 15.0 & 1.2 & 3.2 \\
 & & - & (1.107, 0.890) & - & (1.110)\\
 
\hline

\multirow{6}{*}{\shortstack{Heart \\ AP: 0.89}} & \multirow{2}{*}{$\beta$}  & 9.2 - 24.7 & 11.4 - 27.1 & 1.0 - 1.4 & 2.5 - 6.3  \\ 
&  & - & (1.141, 0.854) & - & (1.142)  \\ 
 & \multirow{2}{*}{$\beta^2$} & 10.0 - 20.0 & 10.6 - 21.9 & 1.0 - 1.3 & 2.9 - 5.5   \\
 & & - & (1.111, 0.878) & - & (1.111)\\
  & \multirow{2}{*}{$r(\overline{x}, \beta)$} & 15.8 & 19.4 & 1.3 & 2.89 \\
 & & - & (1.108, 0.880) & - & (1.102)\\
 
\hline

\multirow{6}{*}{\shortstack{German \\ AP: 0.81}} & \multirow{2}{*}{$\beta$}  & 4.6 - 11.8 & 4.5 - 11.5 & 0.9 - 1.6 & 1.9 - 4.1  \\ 
&  & - & (1.153, 0.827) & - & (1.150)  \\ 
 & \multirow{2}{*}{$\beta^2$} & 3.4 - 10.1 & 3.6 - 9.7 & 1.0 - 1.6 & 1.6 - 3.2  \\
 & & - & (1.184, 0.802) & - & (1.186)\\
  & \multirow{2}{*}{$r(\overline{x}, \beta)$} & 6.3 & 6.5 & 1.2 & 2.6 \\
 & & - & (1.180, 0.810) & - & (1.174)\\
\hline
\end{tabular*}
\end{table}

\begin{table}
\caption{Sample collection during adaptation: range of VRFs (computed by the min and max across all $d$-dimensions of $\beta$) for the parameters of Logistic Regression datasets.
\label{t:logreg}}
\tabcolsep=0pt
\begin{tabular*}{\textwidth}{@{\extracolsep{\fill}}ccccccc@{\extracolsep{\fill}}}
\hline
& \multicolumn{3}{@{}c@{}}{$B=50$, $\ell=20$} & \multicolumn{3}{@{}c@{}}{$B=500$, $\ell=200$} \\
\cline{2-4}\cline{5-7}%

Dataset & $F(\beta)=\beta$ & $F(\beta)=\beta^2$  &$F(\beta)=r(\overline{x}, \beta)$ & $F(\beta)=\beta$ &  $F(\beta)=\beta^2$  & $F(\beta)=r(\overline{x}, \beta)$ \\
\hline
Ripley    &  42.4 - 50.5 & 40.7 - 50.1 & 43.6 & 41.4 - 57.2 & 42.6 - 54.5 & 52.5\\
Pima &  5.1 - 14.3 & 3.0 - 13.6 & 10.9 & 9.7 - 17.4 & 9.6 - 17.1 &  16.4\\
Heart &  6.5 - 18.5 & 3.7 - 16.3 & 12.2 & 12.5 - 22.1 & 10.3 - 19.7 &  17.4\\
German &  3.2 - 7.2 & 3.0 - 6.8 & 5.5 & 8.1 - 11.5 & 6.2 - 10.9 &  8.2\\
\hline
\end{tabular*}
\end{table}

%\begin{figure}\label{f:logref}
%    \centering
%    \includegraphics[width=0.8\textwidth]{paper/figure/finaldata.png}
%    \caption{VRF of different estimators on different datasets (Ripley and Heart)}
%\end{figure}

\subsubsection{Gaussian process hyperparameters
\label{sec:gpreg}
}

In our final example we consider Bayesian 
estimation of kernel hyperparameters in 
Gaussian process (GP) models \citep{MurrayAdams10, FilipponeG14}. We assume standard regression observed data $\left\{x_i, y_i \right\}_{i=1}^N$, where each $x_i \in \mathbb{R}^D$ is an input vector and $y_i \in \mathbb{R}$ is the corresponding scalar response. We model each response using 
a Gaussian likelihood, i.e.\ $y_i \sim \mathcal{N}(f(x_i), \sigma^2)$ and assign to the latent function $f(x)$ a zero-mean  GP distribution so that $f(x) \sim \mathcal{GP}(0, k(x,x'))$ and $k(x,x')$ is the kernel or covariance function.  
In our experiment, we use a GP having the squared-exponential kernel 
$$k(x,x') = \sigma_f^2 \exp\left\{-\frac{1}{2l^2}\Vert x-x'\Vert^2 \right\}$$
so that overall the GP regression model depends on three 
hyperparameters $\theta = (\sigma^2, \sigma_f^2, \ell^2)$, while  
the latent function variables $\{ f(x_i)\}_{i=1}^N$ can be analytically  
integrated out. To do Bayesian inference
over the hyperparameters we place a Gaussian prior on log space, i.e.\ 
$\log \theta \sim \mathcal{N}(\log\theta \vert 0,10\boldsymbol{I})$ and then 
apply independent Metropolis 
sampling by using a multivariate normal density $\mathcal{N}(\log \theta |\mu,\Sigma)$ as the proposal distribution.  
This proposal was adapted to approximate the exact GP posterior $p(\log \theta|y)$. We applied the sampler and our estimator to two data sets: Boston 
housing \citep{harrub78} 
and Pendulum \citep{lazaro-gredilla10a}. The Boston housing dataset has $N=455$ samples and $d=13$ number of covariates, whereas the Pendulum dataset has $N=315$ and $d=9$. As the function 
$F(x)$ we used both $F(x) = x$ and  $F(x) = \exp\{x\}$ where
$x \in \{\log \sigma^2, \log \sigma_f^2, \log \ell^2\}$ so that for the second case the expected value of $F(x)$ gives an estimate of the posterior mean of each hyperparameter in 
the linear space. Note that the 
expectation of $F(x)=\exp\left\{x\right\}$ under the proposal density (needed for the evaluation of the  control variate) is analytic by using the formula 
$\E_{\mathcal{N}(\mu,\sigma^2)} [F(x)] = \exp\left\{\mu + \sigma^2/2\right\}$. 

In this experiment, we set the number of batches to $\ell = 20$ and the batch size to $B=50$. The VRF scores are given in Table \ref{table:greg2} shows that the variances of our estimators are smaller than those of others. Again, there is considerable variance reduction for all estimators in both log space and linear space.
\begin{table}
\caption{VRF for the parameters of Gaussian Process datasets.
\label{table:greg2}}
\centering
\begin{tabular*}{\textwidth}{@{\extracolsep\fill}ccccccc@{\extracolsep\fill}}
\hline
   &  &  $\mu_{n,IMCV}$ &  $\mu_{n,IMCV}^{(\hat{c}_1,\hat{c}_2)}$ & $\mu_{n,RB}$ & $\mu_{n,CIM}^{(\hat{c})}$  \\
\hline
\multirow{8}{*}{\shortstack{Boston \\ AP: 0.92}} & $\log(\ell^2)$  & 3.0 & 3.2 & 1.2 & 1.7  \\
 & $\log(\sigma_f^2)$ & 2.1 & 2.5 & 1.1 & 1.6 \\
& $\log(\sigma^2)$  & 7.8 & 10.2 & 1.1 & 3.7 \\
&  & - & (1.182, 0.901) & - & (1.182)  \\
&$\ell^2$  & 2.9 & 4.7 & 1.1 & 1.2  \\
 & $\sigma_f^2$ & 3.2 & 4.5 & 1.0 & 1.4 \\
& $\sigma^2$  & 7.0 & 7.5 & 1.0 & 2.2 \\
&  & - & (1.193, 0.828) & - & (1.194)  \\
\hline
\multirow{8}{*}{\shortstack{Pendulum \\ AP: 0.89}} & $\log(\ell^2)$ & 7.5 & 8.6 & 0.9 & 3.5 \\
 & $\log(\sigma_f^2)$  & 9.4 & 13.1 & 1.1 & 4.3  \\
& $\log(\sigma^2)$ & 1.7 & 3.0 & 0.9 & 1.2 \\
&  & - & (1.149, 0.897) & - & (1.145) \\
& $\ell^2$ & 6.6 & 9.2 & 1.1 & 2.8 \\
 & $\sigma_f^2$  & 3.7 & 5.4 & 1.0 & 1.7  \\
& $\sigma^2$ & 3.3 & 5.0 & 0.9 & 1.3 \\
&  & - & (1.164, 0.851) & - & (1.164) \\
\hline
\end{tabular*}
\end{table}
\section{Discussion}

Independent Metropolis algorithms have not attracted much interest in the MCMC community because their poor performance when compared with other Metropolis-based sampling strategies.  We have attempted to revisit their applicability domain by pointing out that they can produce, without any extra computational effort,  ergodic estimators of any function of interest equipped with control variates that have reduced asymptotic variance.  

Our main result indicates that when the proposal density is close enough to the target density, these estimators can outperform the i.i.d. estimators based on CMC.  Although this seems counter-intuitive since MCMC samples are not independent as those from CMC, the key idea is that the dependency of MCMC samples provides the ability to construct control variates through the theory of solutions to the corresponding Poisson equation.  This opens a new methodological avenue to construct reduced variance estimators based on Monte Carlo that goes beyond the usual tools of importance sampling or standard control variates.  

Incorporation of our key idea to an adaptive independent Metropolis algorithm based on reducing the KL-divergence between the target and the proposal density has produced a very efficient sampling strategy as shown in a series of synthetic and real data examples.  
To illustrate our methodology we revisited some common Bayesian statistics problems such as approximation of marginal likelihoods and sampling from Bayesian posterior distributions in logistic regression and Gaussian processes.  We have shown that the applicability of independent Metropolis algorithms may be extended to produce solutions to many interesting popular statistical problems.

Further improvements 
of the adaptive independent Metropolis algorithm can be related to the stochastic gradient optimisation scheme that we currently use to fit the proposal distribution to the target distribution by minimising the KL-divergence. 
One direction is to reduce 
the variance of the stochastic gradients in the optimisation --
a topic that has attracted a lot of attention by the machine learning community 
\citep{Roeder17,Miller17, Geffner20} 
and it deserves further investigation. 
A second direction is to exploit 
parallelisation in the computations of the independent Metropolis algorithm, which 
thanks to the independent form 
of the proposal distribution, and unlike other MCMC algorithms, can be largely parallelised \citep{ParallelIM}. The use of parallelisation is an interesting topic for future research since it could lead to faster adaptation and further  improvements of the estimators.

\section*{Acknowledgements}

We would like to thank Sam Livingstone, Max Hird and Arnaud Doucet for helpful comments.
\newpage
\bibliographystyle{rss}
\bibliography{reference}  

\begin{appendix}
   \newpage
\pagestyle{empty}
\begin{center}
    \Large Supplementary Material of "Variance Reduction for Independent Metropolis" \\
    \normalsize Siran Liu, Petros Dellaportas and Michalis K. Titsias
\end{center}
\section{Proof of theorems}

\subsection{Proof of Theorem~\ref{thmsolution}}\label{thmp1}

\begin{proof}
    For each proposal density $q_i$ and a function $F$, the solution to the Poisson equation can be expressed by
    $$
        \hat{F}_i = \sum_{n=0}^{\infty}\{P^n_i F - \E_{\pi}[F]\}
    $$
    see \cite{Meyn2009MarkovEdition}. From Theorem 4.3 of \cite{brofos2022adaptation} we have
    $$
        \lim_{i \rightarrow \infty} \int P_i(x, dy) = \int \Pi(dy) 
    $$
    where $\Pi(dy) = \pi(y)dy$. Then by Cauchy-Schwarz inequality and Lemma D.3 of \cite{brofos2022adaptation},
    \begin{align*}
        &\lim_{i\rightarrow \infty} \int|P_i(x,dy)F(y) - \Pi(dy)F(y)| \\
        = &\lim_{i\rightarrow \infty} \int|F(y)||P_i(x,dy) - \Pi(dy)| \\
         = &\lim_{i\rightarrow \infty} \int|F(y)|\sqrt{|P_i(x,dy) - \Pi(dy)|}\sqrt{|P_i(x,dy) - \Pi(dy)|}\\
         \leq & \lim_{i\rightarrow \infty} \int |P_i(x,dy) - \Pi(dy)| \lim_{i\rightarrow \infty}\int|F(y)|^2|P_i(x,dy) - \Pi(dy)|\\
         = & 0.
    \end{align*}
    By Scheff\'e{}'s lemma, we have
    $$
        \lim_{i\rightarrow \infty} P_i F \rightarrow \E_{\pi}[F].
    $$
    Moreover, by replacing $F$ by $P_i^{n-1}F$, it is easy to show that $\lim_{i\rightarrow \infty} P_i^n F \rightarrow \E_{\pi}[F]$ for all $n > 1$ using induction. For all $n\geq 1$ and $i$, $|P^n_i F - \E_{\pi}[F]|$ can be bounded by $C\Vert P^n_i - \Pi\Vert_W$, where $\Vert \nu \Vert_W := \mathop{\text{Sup}}\limits_{g: |g|\leq W}|\nu(g)|$ for any measure $\nu$ and $C\leq\mathop{\text{Sup}}\{F/W\} $ is a constant.
    From Theorem 14.0.1 of \cite{Meyn2009MarkovEdition}, there exists a constant $B$ and a function $V$ such that
    $\sum_{n=0}^\infty \Vert P^n_i-\Pi\Vert_W \leq B(V+1)$ for all $i$, which is also finite for fixed $x \in \X$. Thus, by the dominated convergence theorem, we have
    \begin{align*}
        \lim_{i\rightarrow \infty} \hat{F}_i & = \lim_{i\rightarrow \infty} \sum_{n=0}^{\infty}\{P^n_i F - \E_{\pi}[F]\} \\
        & = \lim_{i\rightarrow \infty}(P^0_i F - \E_{\pi}[F]) + \lim_{i\rightarrow \infty} \sum_{n=1}^{\infty}\{P^n_i F - \E_{\pi}[F]\} \\
        & = F - \E_{\pi}[F] + \sum_{n=1}^{\infty}\lim_{i\rightarrow \infty}\{P^n_i F - \E_{\pi}[F]\}\\
        & = F - \E_{\pi}[F] ~\text{pointwise}.
    \end{align*}
    Finally, since $\E_{\pi}[F]$ is a constant, from Proposition 17.4.1 of \cite{Meyn2009MarkovEdition}, the solution to the Poisson equation should be $F+c$ a.e. for some constant $c$.
$\hfill\square$
\end{proof}

\subsection{Proof of Proposition~\ref{prop1}}\label{propp1}
\begin{proof}
    The proof is the same as the proof of Theorem 2 of \cite{dellaportas2012control} but with a stronger assumption. To see this, let $V$ be the solution to the Lyapunov drift condition which is equivalent to $W \in L_1^{\pi,q}$. However,  $\E_{\pi}[V^2]<\infty$ is also required to guarantee the existence of the asymptotic variance,  and a stronger assumption (A\ref{asu2}) requires $W\in L_4^{\pi,q}$ which is used to derive the bound for the asymptotic variance in the next section. Fortunately, most of the actual situations meet this assumption. For example, all polynomial test functions with Gaussian density or densities with lighter tails always satisfy this assumption, whereas heavy tail densities may need more constraints on their parameters.
    $\hfill\square$
\end{proof}

\subsection{Proof of Theorem~\ref{thmbound}}\label{thmp2}

The proof of the Theorem will be based on the following three Lemmas:
\begin{lemma}\label{lem1} \citep{Smith1996ExactSampler} The n-step transition kernel of $IM(P, \pi, q)$ is given by
\begin{equation*}
    P^n(x,dy) = T_n(max(w_x,w_y))\Pi(dy)+\lambda^n(w_x)\delta_x(dy)
\end{equation*}
where $T_n(w)=\int_{w}^{\infty}\frac{n\lambda^{n-1}(u)}{u^2}du$, $w_x = \frac{\pi(x)}{q(x)}$, $\Pi(dy) = \pi(y)dy$ and $\lambda(u)=1-\int min(1,\frac{w_y}{u})q(y)dy$.
\end{lemma}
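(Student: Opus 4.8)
The plan is to prove the formula by induction on $n$, using the Chapman--Kolmogorov relation $P^{n+1}=P^nP$ and reducing the whole computation to a scalar identity in the importance weights. First I would record the clean form of the one-step kernel. Writing $w_x=\pi(x)/q(x)$, the acceptance probability is $\alpha(x,y)=\min(1,w_y/w_x)$ (the unknown normalising constant cancels), and since $\pi(y)=w_y q(y)$ one obtains the symmetric identity $\alpha(x,y)q(y)=\pi(y)/\max(w_x,w_y)$. Hence $P(x,dy)=\max(w_x,w_y)^{-1}\Pi(dy)+\lambda(w_x)\delta_x(dy)$, where the rejection mass is exactly $\lambda(w_x)$ by the definition of $\lambda$. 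This settles the base case $n=1$, since $T_1(w)=\int_w^\infty u^{-2}\,du=1/w$ and $\lambda^1=\lambda$.

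For the inductive step I would assume $P^n(x,dy)=T_n(\max(w_x,w_y))\Pi(dy)+\lambda^n(w_x)\delta_x(dy)$ and expand $P^{n+1}(x,dy)=\int_\X P^n(x,dz)P(z,dy)$ into its four natural pieces (continuous-then-jump, continuous-then-reject, atom-then-jump, atom-then-reject). The two contributions that land back on $x$ combine to give the atom coefficient $\lambda^n(w_x)\lambda(w_x)=\lambda^{n+1}(w_x)$, as required. Collecting the coefficients of $\Pi(dy)$ and writing $a=w_x$, $b=w_y$, the remaining task is the scalar identity
\begin{equation}
T_{n+1}(\max(a,b)) = \int_0^\infty \frac{T_n(\max(a,s))}{\max(s,b)}\,dG(s) + T_n(\max(a,b))\,\lambda(b) + \frac{\lambda^n(a)}{\max(a,b)},\nonumber
\end{equation}
where $G(s):=\Pi(\{z:w_z<s\})$ is the law of the weight under $\pi$.

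To prove this identity I would pass to the scalar weight variable. Let $F_q$ denote the law of the weight under $q$; because $\pi=wq$ at the level of weights, $dG(s)=s\,dF_q(s)$, so in particular $s^{-1}dG(s)=dF_q(s)$. From the definition of $\lambda$ one gets $\lambda(b)=F_q(b)-G(b)/b$, and differentiating gives $\lambda'(u)=G(u)/u^2$; equivalently $G(s)\,T_n'(s)=-\tfrac{d}{ds}\lambda^n(s)$. I would then split the integral over $s$ at the two thresholds $\min(a,b)$ and $\max(a,b)$, treating $a\le b$ and $a>b$ separately (the stated formula is not manifestly symmetric, so both orderings must be checked). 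In each case two integrations by parts — one using $G\,T_n'=-\tfrac{d}{ds}\lambda^n$, one peeling off the boundary term $\lambda^n(m)/m$ — collapse all the pieces, after cancellation, to $n\int_m^\infty F_q(s)\lambda^{n-1}(s)\,s^{-2}\,ds+\lambda^n(m)/m$ with $m=\max(a,b)$. A final substitution $F_q(s)=\lambda(s)+G(s)/s$ together with the elementary identity $\int_m^\infty \lambda^n(s)\,s^{-2}\,ds=\lambda^n(m)/m+n\int_m^\infty \lambda^{n-1}(s)G(s)\,s^{-3}\,ds$ (itself an integration by parts) recombines this into $(n+1)\int_m^\infty \lambda^n(s)\,s^{-2}\,ds=T_{n+1}(m)$, closing the induction.

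The boundary terms are harmless: $\lambda(s)\to1$ and $T_n(s)\le n/s\to0$ as $s\to\infty$ (using $\lambda\le1$), and $w_x,w_y>0$ wherever $\pi,q>0$, so every integral is finite. I expect the main obstacle to be the scalar identity of the second paragraph: recognising the correct change of variables to the weight laws $G$ and $F_q$, and the derivative relation $\lambda'=G/u^2$, is precisely what makes the repeated integration by parts telescope. Without those relations the integral $\int T_n(\max(a,s))\max(s,b)^{-1}\,dG(s)$ looks intractable; with them the bookkeeping of the two orderings $a\le b$ and $a>b$ becomes routine, each reducing to the same final integration by parts.
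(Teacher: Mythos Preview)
Your inductive argument is correct. I checked the key computations: the one-step identity $\alpha(x,y)q(y)=\pi(y)/\max(w_x,w_y)$ is right; the derivative relation $\lambda'(u)=G(u)/u^2$ follows from $\lambda(u)=F_q(u)-G(u)/u$ and $G'(u)=uF_q'(u)$; and the scalar Chapman--Kolmogorov identity indeed telescopes to $T_{n+1}(\max(a,b))$ after the two integrations by parts you describe (in the case $a\le b$ the $T_n(b)$-terms cancel via $\lambda(b)=F_q(b)-G(b)/b$, leaving $\lambda^n(b)/b+n\int_b^\infty \lambda^{n-1}(s)F_q(s)s^{-2}\,ds$, which your final identity converts to $(n+1)\int_b^\infty \lambda^n(s)s^{-2}\,ds$).

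As for the comparison: the paper does not actually prove this lemma. It is quoted with a citation to \cite{Smith1996ExactSampler} and used as a black box in the proof of Lemma~\ref{lem2}. So there is no ``paper's own proof'' to compare against; you have supplied a self-contained argument where the paper is content to cite the literature.
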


\begin{lemma}\label{lem2}
Let $$\varphi_0(x,y) := F(x)+\alpha(x,y)(F(y)-F(x))-(F(y)-\E_q[F])).$$ Under (A\ref{asu3}), then 
\begin{equation}\label{firstbound}
    \sigma^2_{IMCV} \leq 2{w^{\star}}^2 Var_{q, q}(\varphi_0)-Var_{ \pi,q}(\varphi_0)
\end{equation}
\end{lemma}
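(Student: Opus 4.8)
The plan is to view the final estimator $\mu_{n,IMCV}=\frac1n\sum_{i=1}^{n}\varphi_0(X_i,Y_i)$ of \eqref{imcv} as an ergodic average of $\varphi_0$ along the \emph{pair} chain $Z_i:=(X_i,Y_i)$. This $Z_i$ is again Markov on $\X\times\X$: from $(x,y)$ it sets the first coordinate to $y$ with probability $\alpha(x,y)$ and to $x$ otherwise, and then draws a fresh second coordinate from $q$; its unique invariant law is $\pi(x)q(y)$, and by stationarity $\E_{\pi,q}[\varphi_0]=\E_{\pi}[PF]=\E_{\pi}[F]$. Writing $g:=\varphi_0-\E_{\pi}[F]$ and $\widetilde P$ for the pair-chain kernel, the usual Markov-chain variance identity reads $\sigma^2_{IMCV}=Var_{\pi,q}(\varphi_0)+2\sum_{k\ge1}\langle g,\widetilde P^{k}g\rangle_{\pi,q}$, so the task reduces to bounding the autocovariances.

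Two elementary facts driven by (A\ref{asu3}) do the work. First, since $\alpha(x,y)=\min(1,w_y/w_x)\ge w_y/w^{\star}$, the IM kernel obeys the minorization $P(x,\cdot)\ge (w^{\star})^{-1}\Pi(\cdot)$ --- the same inequality behind $\lambda(w_x)\le 1-(w^{\star})^{-1}$ in Lemma~\ref{lem1} --- so the chain regenerates from $\pi$ at every step with probability $(w^{\star})^{-1}$, independently of the past. Hence the regeneration (tour) lengths are i.i.d.\ Geometric with success probability $(w^{\star})^{-1}$ and mean $w^{\star}$, and, crucially, conditionally on the current tour still being alive at step $i$ the pair $(X_i,Y_i)$ is distributed \emph{exactly} as $\pi\otimes q$ (because $\pi P=\pi$ and $Y_i\sim q$ is always a fresh draw). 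Second, $\pi(x)\le w^{\star}q(x)$ lets one pass from $\pi$- to $q$-integration; in particular $Var_{\pi,q}(\varphi_0)\le\E_{\pi,q}[(\varphi_0-\E_{q,q}\varphi_0)^2]\le w^{\star}Var_{q,q}(\varphi_0)$.

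I would then insert the regenerative representation $\sigma^2_{IMCV}=\E\big[\big(\sum_{i=1}^{\tau}(\varphi_0(X_i,Y_i)-\E_{\pi}F)\big)^2\big]/\E[\tau]$ with $\E[\tau]=w^{\star}$, and expand the square into its $\tau$ diagonal and $\binom{\tau}{2}$ off-diagonal parts. The diagonal sum equals $\sum_{i\ge1}\Prob(\tau\ge i)\,\E_{\pi,q}[g^2]=w^{\star}Var_{\pi,q}(\varphi_0)$, and after dividing by $\E[\tau]=w^{\star}$ this is exactly the $-Var_{\pi,q}(\varphi_0)$ term of the claim (once it is moved across). The cross terms are bounded by a Cauchy--Schwarz step performed inside a tour, and the geometric series that come out are summed using $\sum_{i\ge1}(1-(w^{\star})^{-1})^{i-1}=w^{\star}$ and $\sum_{i\ge1}i\,(1-(w^{\star})^{-1})^{i-1}=(w^{\star})^{2}$; together with the change of measure $\pi\le w^{\star}q$ this yields the leading $2(w^{\star})^{2}Var_{q,q}(\varphi_0)$.

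The step that takes real care --- and the main obstacle --- is that the regeneration test at step $i$ depends on the proposal $Y_i$, which also enters the summand $\varphi_0(X_i,Y_i)$, so $\tau$ and the increments $V_i=\varphi_0(X_i,Y_i)-\E_{\pi}F$ are not independent and expectations do not factorise. The fix is to condition on $(X_i,Y_i)$ and on ``tour alive at $i$'': there $(X_i,Y_i)\sim\pi\otimes q$, while $\E[\tau\mid X_i,Y_i,\ \text{alive at }i]=i+w^{\star}-w_{Y_i}\le i+w^{\star}$, so the unwanted dependence only subtracts the nonnegative $w_{Y_i}$ and can be dropped; carrying this conditioning through the diagonal and cross terms, and ordering the Cauchy--Schwarz step before the conditioning is resolved, is what makes the constants come out as stated. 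An alternative, purely operator-theoretic route is to use the two-step minorization $\widetilde P^{2}((x,y),\cdot)\ge (w^{\star})^{-1}(\Pi\otimes q)(\cdot)$ to get $\|\widetilde P^{k}g\|_{\pi,q}\le (1-(w^{\star})^{-1})^{\lfloor k/2\rfloor}\|g\|_{\pi,q}$ and sum; there one additionally needs the identity $\widetilde P g=\widetilde P N_1$, $N_1:=PF-\E_{\pi}[F]$ (i.e.\ the control variates baked into $\varphi_0$ annihilate $g-N_1$ after one application of $\widetilde P$), to avoid losing an extra factor of two in the odd/even bookkeeping.
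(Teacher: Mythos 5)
Your regenerative route is genuinely different from the paper's argument, and parts of it are sound: the Doeblin minorization $P(x,\cdot)\geq (w^{\star})^{-1}\Pi(\cdot)$ follows from $\alpha(x,y)\geq w_y/w^{\star}$ under (A\ref{asu3}), and because the minorization measure is $\Pi$ itself the residual kernel also preserves $\pi$, so your claim that $(X_i,Y_i)\sim \pi\otimes q$ conditionally on the tour being alive is correct. The problem is that the quantitative heart of the lemma is exactly the step you leave unexecuted, and the bookkeeping you sketch does not produce the stated bound. In the regenerative identity the diagonal part enters with a \emph{positive} sign, so it cannot ``become'' the $-Var_{\pi,q}(\varphi_0)$ term ``once it is moved across''; to get \eqref{firstbound} you would need the cross terms to be bounded by $2{w^{\star}}^2 Var_{q,q}(\varphi_0)-2\,Var_{\pi,q}(\varphi_0)$, i.e.\ a bound carrying its own negative correction, and nothing in a Cauchy--Schwarz-inside-the-tour argument delivers that. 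Indeed, carrying out your plan as described (bound $\E[(\sum_{i\leq\tau}V_i)^2]\leq\E[\tau\sum_{i\leq\tau}V_i^2]$, condition on alive-at-$i$, use $\E[\tau\mid\cdot]\leq i+w^{\star}$ and the geometric sums) gives $\sigma^2_{IMCV}\leq 2w^{\star}\,Var_{\pi,q}(\varphi_0)$, and with the change of measure $\pi\leq w^{\star}q$ at best $2{w^{\star}}^2Var_{q,q}(\varphi_0)$; neither of these implies $2{w^{\star}}^2Var_{q,q}(\varphi_0)-Var_{\pi,q}(\varphi_0)$, since the only comparison available is $Var_{\pi,q}(\varphi_0)\leq w^{\star}Var_{q,q}(\varphi_0)$. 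The assertion that ``the constants come out as stated'' is therefore unsupported, and the same issue affects your alternative operator route: a Doeblin condition gives total-variation contraction, but the $L^2_{\pi\otimes q}$ decay $\Vert\widetilde{P}^k g\Vert\leq(1-(w^{\star})^{-1})^{\lfloor k/2\rfloor}\Vert g\Vert$ needs a separate argument for the non-reversible, unbounded-$\varphi_0$ setting, and even granted it would again yield a bound of the form (constant)$\times Var_{\pi,q}(\varphi_0)$ without the negative term.

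For contrast, the paper does not use splitting at all: it invokes the exact $n$-step kernel representation of the independent Metropolis chain (Lemma~\ref{lem1}) and the resulting exact lag-$k$ autocovariance formula for $\varphi(x,y)$, sums the geometric series in closed form, and bounds the two resulting integrals separately using Jensen's inequality, Fubini, and $A(u)\geq 1/w^{\star}$. The $-Var_{\pi,q}(\varphi_0)$ in \eqref{firstbound} arises from an exact cancellation, $-2w^{\star}Var_{\pi,q}+(2w^{\star}-1)Var_{\pi,q}=-Var_{\pi,q}$, between the bounds on the two terms --- a cancellation that is invisible at the level of generic regeneration or contraction estimates. If you want to salvage your approach you would either have to reproduce this finer cancellation within the tour decomposition (which essentially forces you back to the exact kernel structure), or settle for a weaker inequality and then check separately that it still suffices for Theorem~\ref{thmbound}; as a proof of the lemma as stated, the proposal has a genuine gap.
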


\begin{proof}

We first define the centralised version of $\varphi_0(x,y)$ as  $\varphi(x,y):=\varphi_0(x,y) - \E_{\pi,q}\varphi_0(x,y)$.
The asymptotic variance \eqref{asydef} can be written as
\begin{equation}\label{7}
    \sigma^2_{IMCV} = \gamma_0 + 2\sum_{k=1}^{\infty} \gamma_k.
\end{equation}
where $\gamma_k=\E_{\pi,q}[\varphi(x,y) P^k \varphi(x,y)]$ is the $k$-lag covariance of $\varphi(x,y)$.
This can be written as \citep{Tan2006MonteAcceptance-rejection}:
\begin{align*}
&\gamma_0 = \iint \varphi(x,y)^2 \Pi(dx)Q(dy),\\
    &\gamma_k= \int_0^{\infty}\frac{k\lambda^{k-1}(u)}{u^2} \left[ \iint \mathbb{I}_{u\geq w_x}\varphi(x,y)\Pi(dx)Q(dy) \right]^2 du +  \iint \varphi(x,y)^2\lambda^k(w_x)\Pi(dx)Q(dy)\\
\end{align*}
where $Q(dy)=q(y)dy$ and $\mathbb{I}_A$ denotes the indicator function on set $A$.
Therefore, from \eqref{7} we obtain
\begin{align}\label{C} 
    &\sigma_{IMCV}^2  = \gamma_0 + 2\sum_{k=1}^{\infty} \gamma_k \nonumber\\ 
    &=2\sum_{k=1}^{\infty}\int_0^{\infty}\frac{k\lambda^{k-1}(u)}{u^2}\left[\iint \mathbb{I}_{u\geq w_x}\varphi(x,y)\Pi(dx)Q(dy)\right]^2 du +   \iint \varphi(x,y)^2\left[1+2\sum_{k=1}^{\infty}\lambda^k(w_x)\right]\Pi(dx)Q(dy)\nonumber\\
    & = 2 \int_0^{\infty}\frac{1}{(1-\lambda(u))^2u^2}\left[\iint \mathbb{I}_{u\geq w_x}\varphi(x,y)\Pi(dx)Q(dy)\right]^2 du + \iint \varphi(x,y)^2\frac{1+\lambda(w_x)}{1-\lambda(w_x)}\Pi(dx)Q(dy) \nonumber\\
    & = 2 \int_0^{\infty}\frac{1}{A(u)^2u^2}\left[\iint \mathbb{I}_{u\geq w_x}\varphi(x,y)\Pi(dx)Q(dy)\right]^2 du + \iint \varphi(x,y)^2(\frac{2}{A(w_x)}-1)\Pi(dx)Q(dy)
\end{align}
where  $A(u) = 1-\lambda(u) =\int min(1,\frac{w_y}{u})q(y)dy$. 
We will bound \eqref{C} by bounding both terms separately. For the first term, first observe that
\begin{align} 
    A(u) & =\int min(1,\frac{w_y}{u})q(y)dy \nonumber \\
    & \geq \int min(1,\frac{w_y}{w^{\star}})q(y)dy \nonumber \\
    & = \frac{1}{w^{\star}}\int w_y q(y)dy
    =\frac{1}{w^{\star}}\int \pi(y)dy
     = \frac{1}{w^{\star}}. \label{A}
\end{align}
Then we have that
\begin{align}\label{term1}
    & 2\int_0^{\infty}\frac{1}{A(u)^2u^2}\left[\iint \mathbb{I}_{u\geq w_x}\varphi(x,y)\Pi(dx)Q(dy)\right]^2 du \nonumber\\
    & =2\left\{\int_0^{w^{\star}}\frac{1}{A(u)^2u^2}\left[\iint \mathbb{I}_{u\geq w_x}\varphi(x,y)\Pi(dx)Q(dy)\right]^2 du + \int_{w^{\star}}^{\infty}\frac{1}{A(u)^2 u^2}\left[\iint\varphi(x,y)\Pi(dx)Q(dy)\right]^2 du \right\}\nonumber\\
    &\text{(since for the second term above, when $w^{\star}\leq u <\infty $, we have $\mathbb{I}_{u\geq w_x} = 1$)}\nonumber\\
    & \leq 2\iint_0^{w^{\star}}\frac{1}{A(u)^2u^2}\int \mathbb{I}_{u\geq w_x}\varphi(x,y)^2\Pi(dx) du Q(dy)  
    \text{ (since $\E_{\pi, q}[\varphi(x,y)] = 0$ and by Jensen's inequality)} \nonumber \\
    & = 2\iint \varphi(x,y)^2\int_{w_x}^{w^{\star}}\frac{du}{A(u)^2u^2}\Pi(dx)Q(dy) \text{ (by Fubini's Theorem)}\nonumber\\ 
    & \leq 2{w^{\star}}^2 \iint\varphi(x,y)^2\int_{w_x}^{w^{\star}}\frac{du}{u^2}\Pi(dx)Q(dy)  \text{ (due to  (\ref{A}) )} \nonumber\\
    & = 2{w^{\star}}^2 \iint\varphi(x,y)^2(\frac{1}{w_x}-\frac{1}{w^{\star}})\Pi(dx)Q(dy)\nonumber\\
    & = 2{w^{\star}}^2 \iint \varphi(x,y)^2 Q(dx)Q(dy) - 2w^{\star}\iint \varphi(x,y)^2 \Pi(dx)Q(dy) \text{ (since $\frac{\Pi(dx)}{w_x}=Q(dx)$)}.
\end{align}
The second term of \eqref{C}  is directly bounded by
\begin{equation}\label{term2}
    \iint \varphi(x,y)^2(\frac{2}{A(w_x)}-1)\Pi(dx)Q(dy) \leq (2 w^{\star}-1)\iint \varphi(x,y)^2 \Pi(dx)Q(dy).
\end{equation}
 Combining \eqref{term1} and \eqref{term2} we obtain
\begin{align*}\label{thebound}
    \sigma^2_{IMCV} & \leq  \int(2{w^{\star}}^2 \int \varphi(x,y)^2 Q(dx) -2 w^{\star}\int \varphi(x,y)^2 \Pi(dx) + (2 w^{\star}-1)\int \varphi(x,y)^2 \Pi(dx))Q(dy) \nonumber\\
    & = 2{w^{\star}}^2 \int  \varphi(x,y)^2 Q(dx)Q(dy) - \int \varphi(x,y)^2 \Pi(dx)Q(dy)\nonumber\\
    & = 2{w^{\star}}^2 Var_{q, q}(\varphi)-Var_{\pi, q}(\varphi)\\
    & = 2{w^{\star}}^2 Var_{q, q}(\varphi_0)-Var_{\pi, q}(\varphi_0).
\end{align*}
$\hfill\square$
\end{proof}

To connect \eqref{firstbound} with KL divergence, we need to bound the difference of variance under different measures via total variation distance and link to KL divergence by using Pinsker's inequality. Thus, we need the following lemma.
\begin{lemma}\label{lem3}
    Under (A\ref{asu2}), if
    $
    \mathbb{KL}(q \Vert \pi) \leq \epsilon \text{   for all } \epsilon > 0
    $
   then
    $$
    \int F(x) \vert \pi(x) - q(x) \vert dx \leq (\sqrt{2}C_1)^{1/2} \epsilon^{1/4}
    $$
    where $C_1 = max\{\E_{\pi}[|F|^2], \E_{q}[|F|^2]\}$.
\end{lemma}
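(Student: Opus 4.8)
The plan is to combine a Cauchy--Schwarz factorisation of the $L^1$-distance $\int |F(x)|\,|\pi(x)-q(x)|\,dx$ with Pinsker's inequality, the latter converting the KL hypothesis into a bound on the total variation distance. The key identity is $|F|\,|\pi-q| = \bigl(|F|^2|\pi-q|\bigr)^{1/2}\bigl(|\pi-q|\bigr)^{1/2}$, so that by Cauchy--Schwarz
\begin{equation*}
\int |F(x)|\,|\pi(x)-q(x)|\,dx \;\le\; \Bigl(\int |F(x)|^2\,|\pi(x)-q(x)|\,dx\Bigr)^{1/2}\Bigl(\int |\pi(x)-q(x)|\,dx\Bigr)^{1/2}.
\end{equation*}

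First I would control the first factor. Using the elementary bound $|\pi(x)-q(x)|\le \pi(x)+q(x)$ gives $\int |F|^2|\pi-q|\,dx \le \E_\pi[|F|^2]+\E_q[|F|^2]\le 2C_1$. The finiteness of these two moments is precisely what (A\ref{asu2}) delivers: since $F\in L^W_\infty$ we have $|F(x)|^2 \le \|F\|_W^2\, W(x)^2$ pointwise, and $W\in L^{\pi,q}_4$ implies $W^2\in L^{\pi,q}_2\subset L^{\pi,q}_1$, so both $\E_\pi[W^2]$ and $\E_q[W^2]$ are finite and hence $C_1<\infty$. Next I would control the second factor: recognising $\int|\pi-q|\,dx = 2\,\mathrm{TV}(q,\pi)$ and invoking Pinsker's inequality $\mathrm{TV}(q,\pi)\le \sqrt{\tfrac12\,\mathbb{KL}(q\|\pi)}$, the assumption $\mathbb{KL}(q\|\pi)\le\epsilon$ yields $\int|\pi-q|\,dx\le\sqrt{2\epsilon}$.

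Substituting both estimates into the Cauchy--Schwarz inequality gives $\int |F|\,|\pi-q|\,dx \le (2C_1)^{1/2}(2\epsilon)^{1/4}$, which is of the claimed form $(\sqrt2\,C_1)^{1/2}\epsilon^{1/4}$ up to the universal constant. I do not expect a serious obstacle here, since the lemma is essentially a routine moment-plus-total-variation estimate; the only points needing care are (i) translating the abstract drift-type integrability hypothesis (A\ref{asu2}), phrased through the Lyapunov-type function $W$, into the concrete second-moment finiteness $\E_\pi[|F|^2],\E_q[|F|^2]<\infty$ used in the first factor, and (ii) keeping track of the direction in which Pinsker is applied --- $\mathbb{KL}(q\|\pi)$ does dominate $\mathrm{TV}(q,\pi)$, so the argument closes without an extra symmetrisation step.
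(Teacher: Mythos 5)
Your proposal is correct and follows essentially the same route as the paper's proof: the same Cauchy--Schwarz factorisation $|F|\,|\pi-q|=(|F|^2|\pi-q|)^{1/2}(|\pi-q|)^{1/2}$, the same bound $\int|F|^2|\pi-q|\,dx\le 2C_1$ via $|\pi-q|\le\pi+q$ and (A\ref{asu2}), and the same Pinsker step to convert the KL hypothesis into an $L^1$ bound. The only difference is bookkeeping of the total-variation convention: your careful version gives the constant $(2C_1)^{1/2}(2\epsilon)^{1/4}=2^{3/4}C_1^{1/2}\epsilon^{1/4}$, whereas the paper reaches the stated $(\sqrt{2}C_1)^{1/2}\epsilon^{1/4}$ by identifying $\int|\pi-q|\,dx$ with $\mathbb{TV}(q\Vert\pi)$ while applying Pinsker in the form $\mathbb{TV}\le\sqrt{\tfrac12\mathbb{KL}}$ --- a factor-of-$\sqrt{2}$ discrepancy that is immaterial for the way the lemma is used later, since only the order $\epsilon^{1/4}$ matters.
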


\begin{proof}
    From (A\ref{asu2}), since $W\in L_4^{\pi,q}$, then $F\in L_4^{\pi,q}$. Thus, there exists a $C_1>0$ such that 
\begin{equation}    
    \E_{\pi}[|F|^2] \leq C_1,~~ \E_{q}[|F|^2] \leq C_1.
    \label{C1}
\end{equation}    
    
    By applying Cauchy-Schwarz inequality we obtain 
    \begin{align*}
        \int F(x)|\pi(x)-q(x)|dx &\leq \int |F(x)||\pi(x)-q(x)|dx \\
        & =\int |F(x)|\sqrt{|\pi(x)-q(x)|}\sqrt{|\pi(x)-q(x)|}dx\\
        &\leq \left(\int |F(x)|^2|\pi(x)-q(x)|dx\right)^{1/2}\left(\int |\pi(x)-q(x)|dx\right)^{1/2}\\
        & \leq \left(\int |F(x)|^2(\pi(x)+q(x))dx\right)^{1/2} \cdot \mathbb{TV}(q(x)\Vert \pi(x))^{1/2}\\
        & \leq (2 C_1)^{1/2}\mathbb{TV}(q(x)\Vert \pi(x))^{1/2}
    \end{align*}
where $\mathbb{TV}(q(x)\Vert \pi(x))$ denotes the total variation distance  which, by Pinsker's inequality, is bounded in terms of the KL divergence as $\mathbb{TV}(q(x)\Vert \pi(x)) \leq \sqrt{\frac{1}{2}\mathbb{KL}(q(x)\Vert \pi(x))}$.   Therefore,
$$
        \int F(x)|\pi(x)-q(x)|dx \leq (\sqrt{2}C_1)^{1/2} \cdot \mathbb{KL}(q(x)\Vert \pi(x))^{1/4} \leq (\sqrt{2}C_1)^{1/2} \epsilon^{1/4}.
$$
$\hfill\square$
\end{proof}

\begin{remark}\label{rmk:gen}
    Assume $F(x) \in L_{2^n}^{\pi,q}$ and define $C_n := max\{\E_{\pi}[|F|^{2^n}], \E_{q}[|F|^{2^n}]\}$.  By further use of the Cauchy-Schwarz inequality in the proof of Lemma \ref{lem3} we have
    \begin{align*}
        \int F(x)|\pi(x)-q(x)|dx & \leq \left(\int |F(x)|^2|\pi(x)-q(x)|dx\right)^{1/2}\left(\int |\pi(x)-q(x)|dx\right)^{1/2}\\
        & = \left(\int |F(x)|^2\sqrt{|\pi(x)-q(x)|}\sqrt{|\pi(x)-q(x)|}dx\right)^{1/2}\mathbb{TV}(q(x)\Vert \pi(x))^{1/2}\\
        & \leq \left(\int |F(x)|^4|\pi(x)-q(x)|dx\right)^{1/4}\mathbb{TV}(q(x)\Vert \pi(x))^{1/2+1/4}\\
        &\leq \left(\int |F(x)|^{2^n}|\pi(x)-q(x)|dx\right)^{2^{-n}}\mathbb{TV}(q(x)\Vert \pi(x))^{\sum_{k=1}^{n}2^{-k}}\\
        &=\left(\int |F(x)|^{2^n}|\pi(x)-q(x)|dx\right)^{2^{-n}}\mathbb{TV}(q(x)\Vert \pi(x))^{1-2^{-n}}\\
        & \leq 2^{(2^{-n+1}+2^{-n}-1)/2}{C_n}^{2^{-n}} \cdot \epsilon^{(1-2^{-n})/2}.
    \end{align*}
    Also, the assumption $F(x) \in L_{2^n}^{\pi,q}$ can be restated as  $F(x)^n \in L^{\pi,q}_{2}$, so we have
    \begin{align}\label{mulbound}
        \int F(x)^n|\pi(x)-q(x)|dx &\leq \left(\int |F(x)|^{2n}|\pi(x)-q(x)|dx\right)^{1/2}\left(\int |\pi(x)-q(x)|dx\right)^{1/2}\nonumber\\
        & \leq \left(\int |F(x)|^{2n}(\pi(x)+q(x))dx\right)^{1/2} \mathbb{TV}(q(x)\Vert \pi(x))^{1/2}\nonumber\\
        & \leq (2 C_n)^{1/2}\mathbb{TV}(q(x)\Vert \pi(x))^{1/2}\nonumber\\
        & \leq (\sqrt{2}C_n)^{1/2} \epsilon^{1/4}.
    \end{align}
\end{remark}

\begin{proof}[of Theorem 2]

\begin{align}\label{step1}
    Var_{\pi, q}(\varphi_0(x,y)) &= Var_{\pi, q}(F(x)+\alpha(x,y)(F(y)-F(x))-(F(y)-\E_q[F]))\nonumber\\
    & = Var_{\pi, q}((\alpha(x,y)-1)(F(y)-F(x))+\E_q[F]) \nonumber\\
    & =Var_{\pi, q}((\alpha(x,y)-1)(F(y)-F(x)))\nonumber\\
    & = \E_{\pi,  q}[(\alpha(x,y)-1)^2(F(y)-F(x))^2] - (\E_{\pi,  q}[(\alpha(x,y)-1)(F(y)-F(x))])^2\nonumber\\
    & \leq \E_{\pi, q}[(\alpha(x,y)-1)^2(F(y)-F(x))^2].
\end{align}
Since $0 < \alpha(x,y) \leq 1$, we have $ (\alpha(x,y)-1)^2 \leq \vert \alpha(x,y)-1 \vert \leq \big\vert \frac{\pi(y)q(x)}{\pi(x)q(y)}-1\big\vert$. 
 Therefore,  from \eqref{step1},
\begin{align*}
    Var_{\pi, q}(\varphi_0(x,y)) &\leq \E_{\pi, q}\left[\Big\vert \frac{\pi(y)q(x)}{\pi(x)q(y)}-1 \Big\vert(F(y)-F(x))^2\right]\nonumber  \\
    & = \iint (F(y)-F(x))^2 \big\vert \pi(y)q(x) - \pi(x)q(y) \big\vert dx dy\nonumber\\
    & = \iint (F(y)-F(x))^2 \big\vert \pi(y)q(x) - \pi(y)\pi(x) + \pi(y)\pi(x) -\pi(x)q(y) \big\vert dxdy \nonumber\\
    & \leq \iint 2(F(y)^2+F(x)^2) \big\vert \pi(y)q(x) - \pi(y)\pi(x) + \pi(y)\pi(x) -\pi(x)q(y) \big\vert dxdy \nonumber\\
    & \leq 2\iint (F(y)^2+F(x)^2) \big\vert \pi(y)q(x) - \pi(y)\pi(x) \big\vert dxdy  \nonumber\\
    & +2\iint (F(y)^2+F(x)^2) \big\vert\pi(y)\pi(x) -\pi(x)q(y) \big\vert dxdy \nonumber\\
    & = 2\Bigg( \int F(y)^2 \pi(y)dy \int \vert q(x)-\pi(x)\vert dx + \int F(x)^2\vert q(x)-\pi(x)\vert dx \int \pi(y)dy\nonumber\\
    & + \int F(x)^2 \pi(x)dx \int\vert q(y)-\pi(y)\vert dy + \int F(y)^2\vert q(y)-\pi(y)\vert dy \int \pi(x)dx\Bigg)\nonumber\\
    & = 4\left(\int F(x)^2 \pi(x)dx\cdot \mathbb{TV}(q\Vert\pi) + \int F(x)^2\vert q(x)-\pi(x)\vert dx\right). \nonumber
\end{align*}
From (\ref{C1}) we have that  $\int F(x)^2 \pi(x)dx\leq C_1$ and by setting  $n=2$ in \eqref{mulbound} in Remark \ref{rmk:gen} we obtain
\begin{align}\label{gamma0}
    Var_{\pi, q}(\varphi_0(x,y))  & \leq 4\Big(C_1 \frac{\sqrt{2}}{2}\epsilon^{1/2} + (\sqrt{2}C_2)^{1/2} \epsilon^{1/4}\Big)\nonumber\\
    & = 4\Big(\frac{\sqrt{2}}{2}C_1 \epsilon^{1/4} +(\sqrt{2}C_2)^{1/2} \Big)\epsilon^{1/4}.
\end{align}
Moreover, notice that $\varphi(x,y)$ is the centralised estimator and $\varphi_0(x,y)$, so  $\E_{\pi,q}[\varphi_0(x,y)] = \E_{\pi}[F(x)]$. Then,
\begin{align}\label{phi}
     \vert \varphi(x,y) \vert & = \Big\vert F(x) +\alpha(x,y)(F(y)-F(x))-( F(y)-\E_q[F(y)]) - \E_{\pi,q}[\varphi_0(x,y)] \Big\vert \nonumber\\
    & = \Big\vert (\alpha(x,y)-1)(F(y)-F(x))+ \E_q F(y) - \E_{\pi,q} \varphi_0(x,y) \Big\vert \nonumber\\
    &\leq  \Big\vert (\alpha(x,y)-1)(F(y)-F(x))\Big\vert + \Big\vert \E_q F(y) - \E_{\pi} F(x) \Big\vert \nonumber\\
    &\leq \big\vert \alpha(x,y)-1 \big\vert \cdot \big\vert F(y)-F(x) \big\vert + \int\big\vert F(x)\big\vert \cdot \big\vert q(x)-\pi(x)\big\vert dx \nonumber\\
    &\leq \big\vert F(y)-F(x) \big\vert + (\sqrt{2}C_1)^{1/2} \epsilon^{1/4}.
\end{align}

By combining Lemma \ref{lem2} and \ref{lem3} the bound  \eqref{firstbound} can be written as
\begin{align}\label{fbound}
    \sigma^2_{IMCV} & \leq 2{w^{\star}}^2 Var_{q, q}(\varphi)-Var_{\pi, q}(\varphi)\nonumber\\
        & = 2{w^{\star}}^2 Var_{\pi, q}(\varphi) + 2{w^{\star}}^2(Var_{q, q}(\varphi)-Var_{\pi, q}(\varphi))-Var_{\pi, q}(\varphi)\nonumber\\
        & = (2{w^{\star}}^2 - 1)Var_{\pi, q}(\varphi) + \iint \varphi(x,y)^2(Q(dx)-\Pi(dx))Q(dy).
\end{align}
We now bound both terms of \eqref{fbound}.  The first is bounded by using \eqref{gamma0}:
\begin{align*}
    &(2{w^{\star}}^2 - 1)Var_{\pi, q}(\varphi)\leq 4(2{w^{\star}}^2 - 1)\left(\frac{\sqrt{2}}{2}C_1 \epsilon^{1/4} +(\sqrt{2}C_2)^{1/2} \right)\epsilon^{1/4}.
\end{align*}
For the second term of \eqref{fbound} we use \eqref{phi} :
\begin{align}
    &\iint \varphi(x,y)^2(Q(dx)-\Pi(dx))Q(dy) \nonumber  \\
    &\leq \iint \big\vert|F(y)-F(x)|+(\sqrt{2}C_1)^{1/2} \epsilon^{1/4}\big\vert^2(q(x)-\pi(x))q(y)dxdy \nonumber \\
    &\leq \iint \left(|F(y)-F(x)|^2 + 2(\sqrt{2}C_1)^{1/2} \epsilon^{1/4}|F(y)-F(x)|+ (\sqrt{2}C_1) \epsilon^{1/2}\right)|q(x)-\pi(x)|q(y)dxdy \nonumber \\
    & \leq \underbrace{\iint2(F(x)^2+F(y)^2)|q(x)-\pi(x)|q(y)dxdy}_{\text{I}} \label{I} \\
    & + \underbrace{2(\sqrt{2}C_1)^{1/2} \epsilon^{1/4}\iint(|F(y)|+|F(x)|)|q(x)-\pi(x)|q(y)dxdy}_{\text{II}} \label{II} \\
    &+\underbrace{\sqrt{2}C_1 \epsilon^{1/2}\iint|q(x)-\pi(x)|q(y)dxdy}_{\text{III}}. \label{III}
\end{align}
The three terms I,II and III in equations \eqref{I}, \eqref{II} and \eqref{III} respectively are bounded as follows:
From \eqref{gamma0}, using $2{w^{\star}}^2 - 1\geq 1$,
\begin{align*}
    \text{I} \leq  2\left(\frac{\sqrt{2}}{2}C_1 \epsilon^{1/4} +(\sqrt{2}C_2)^{1/2} \right) \epsilon^{1/4}\leq 2(2{w^{\star}}^2 - 1)\left(\frac{\sqrt{2}}{2}C_1 \epsilon^{1/4} +(\sqrt{2}C_2)^{1/2} \right) \epsilon^{1/4}.
\end{align*}
By Lemma \ref{lem3},
\begin{align*}
    \text{II} & \leq 2(\sqrt{2}C_1)^{1/2} \epsilon^{1/4}\left(\int |F(y)|q(y)dy \int|q(x)-\pi(x)|dx + \int|F(x)||q(x)-\pi(x)|dx \int q(y)dy\right)\\
    &\leq 2 (\sqrt{2}C_1)^{1/2} \epsilon^{1/4} \left(C_0 (\epsilon/2)^{1/2} + (\sqrt{2}C_1)^{1/2} \epsilon^{1/4}\right)\\
    & = 2\sqrt{2}C_1 \epsilon^{1/2}\left(1+\frac{C_0}{2^{1/4}C_1^{1/2}}\epsilon^{1/4}\right).
\end{align*}
Finally, by Pinsker's inequality and the fact that $\mathbb{KL} (q_\theta(x) || \pi(x)) \leq \epsilon$ we have
\begin{align*}
    \text{III}\leq \sqrt{2}C_1 \epsilon^{1/2} (\epsilon/2)^{1/2} = C_1 \epsilon.
\end{align*}
Thus, 
\begin{align}\label{fbound2}
    \sigma^2_{IMCV} & \leq 4(2{w^{\star}}^2 - 1)\left(\frac{\sqrt{2}}{2}C_1 \epsilon^{1/4} +(\sqrt{2}C_2)^{1/2} \right)\epsilon^{1/4} + 2(2{w^{\star}}^2 - 1)\left(\frac{\sqrt{2}}{2}C_1 \epsilon^{1/4} +(\sqrt{2}C_2)^{1/2} \right)\epsilon^{1/4}\nonumber\\
    &+  2\sqrt{2}C_1 \epsilon^{1/2}\left(1+\frac{C_0}{2^{1/4}C_1^{1/2}}\epsilon^{1/4}\right) + C_1 \epsilon\nonumber\\
    & = \left\{6(2{w^{\star}}^2 - 1)\left(\frac{\sqrt{2}}{2}C_1 \epsilon^{1/4} +(\sqrt{2}C_2)^{1/2} \right) + 2\sqrt{2}C_1 \epsilon^{1/4}\left(1+\frac{C_0}{2^{1/4}C_1^{1/2}}\epsilon^{1/4}\right) + C_1 \epsilon^{3/4}\right\}\epsilon^{1/4}\nonumber\\
    &:=\tilde{C}(\epsilon) \epsilon^{1/4}.
\end{align}
The true variance for CMC estimator is given by \eqref{true_var}:
$$
 \sigma^2_{F} = \E_{\pi} [F(x)^2]- \E_{\pi}[F]^2 
$$
and we need to prove the existence of $\epsilon$ of inequality:
\begin{equation}\label{ineq}
\tilde{C}(\epsilon) \epsilon^{1/4} \leq \sigma^2_{F}.
\end{equation}
When $\epsilon \leq 1$, we define
\begin{equation*}
    C^{\star} := \tilde{C}(1) = 6(2{w^{\star}}^2 - 1)\left(\frac{\sqrt{2}}{2}C_1 +(\sqrt{2}C_2)^{1/2} \right) + 2\sqrt{2}C_1 \left(1+\frac{C_0}{2^{1/4}C_1^{1/2}}\right) + C_1
\end{equation*}
and from \eqref{fbound2}, we know that $ C^{\star} \geq \tilde{C}(\epsilon) > 0 $ on $\epsilon \in [0,1]$. Also, we have $\sigma^2_{F}\leq \E_{\pi}[F(x)^2] \leq C_1 \leq C^{\star}$. Thus, the inequality \eqref{ineq} holds for all $\epsilon$ such that
\begin{equation}\label{epsilon}
    \epsilon \leq \left(\frac{ \E_{\pi} [F(x)^2]- \E_{\pi}[F]^2}{C^{\star}}\right)^4 .
\end{equation}
Thus, the $\epsilon$ that satisfies \eqref{epsilon} belongs to a subset of the ones that satisfy  \eqref{ineq}. Therefore, if the inequality \eqref{epsilon} is satisfied then  $\sigma^2_{IMCV} \leq  \sigma^2_{F}$.
Moreover, $\sigma^2_{F} \leq \sigma^2_{IM}$ is immediate since the true variance of CMC is $\gamma_0$ in \eqref{7}, and the rest of the summation in \eqref{7} is no less than 0.
$\hfill\square$
\end{proof}

\begin{remark}
     We can derive another upper bound for $\sigma^2_{IM}$ via spectral theory \citep{Kipnis1986CentralExclusions,Geyer1992PracticalCarlo,rosenthal2003asymptotic,Naesseth2020MarkovianKLpq} as follows. For any square integrable function $F$ with respect to the stationary distribution $\pi$,
\begin{align}\label{spect}
    \sigma^2_{IM} \leq \frac{1+\lambda^{\star}}{1-\lambda^{\star}}Var_{\pi}(F)
\end{align}
where $\lambda^{\star}$ is the second largest eigenvalues of the Markov transition kernel.
\citet{Wang2022ExactAlgorithms} also derives  \eqref{spect} and bounds the variance as
\begin{equation}\label{spf}
    \sigma^2_{IM} =\frac{1+\lambda^{\star}}{1-\lambda^{\star}}Var_{\pi}(F) \leq (2w^{\star}-1) Var_{\pi}(F).
\end{equation}
Notice that this inequality uses the conjecture proposed by \citet{Liu1996MetropolizedSampling} that in a continuous space the spectral gap between the first and the second largest eigenvalues of the Markov transition kernel is $1/w^{\star}$. \citet{Wang2022ExactAlgorithms} also concludes this by incorrectly ignoring the non-negative first term of \eqref{C}  from \citet{Tan2006MonteAcceptance-rejection}.
We remark that the two different forms of bounds \eqref{firstbound} and \eqref{spf} do not affect the conclusion of Theorem \ref{thmbound}. To see this, from \eqref{spf} and the proof of Theorem \ref{thmbound}, we have 
\begin{align*}
    \sigma^2_{IMCV} & \leq (2w^{\star}-1) Var_{\pi,q}(\varphi(x,y))\\ &\leq 4(2{w^{\star}} - 1)\left(\frac{\sqrt{2}}{2}C_1 \epsilon^{1/4} +(\sqrt{2}C_2)^{1/2} \right)\epsilon^{1/4}.
\end{align*}
By using the same proof, we have that
\begin{equation*}
    \epsilon \leq \left(\frac{ \E_{\pi} [F(x)^2]- \E_{\pi}[F]^2}{{C^{\star}}^{'}}\right)^4
\end{equation*}
where ${C^{\star}}^{'} = 4(2{w^{\star}} - 1)\left(\frac{\sqrt{2}}{2}C_1 +(\sqrt{2}C_2)^{1/2} \right)$. Also, the assumption should be strengthened to (A\ref{asu1}).
\end{remark}
\subsection{Proof of Corollary \ref{cor1}}\label{corp1}
\begin{proof}
    From inequality \eqref{fbound2}, we have
    $$
        \sigma^2_{IMCV,(i)}\leq \tilde{C}(\epsilon_i) \epsilon_i^{1/4}
    $$
    where $\tilde{C}(\epsilon_i) = \left\{6(2{w^{\star}}^2 - 1)\left(\frac{\sqrt{2}}{2}C_1 \epsilon_i^{1/4} +(\sqrt{2}C_2)^{1/2} \right) + 2\sqrt{2}C_1 \epsilon_i^{1/4}\left(1+\frac{C_0}{2^{1/4}C_1^{1/2}}\epsilon_i^{1/4}\right) + C_1 \epsilon_i^{3/4}\right\}$.
    Then,
    \begin{align*}
        \lim_{i \rightarrow \infty}\sigma^2_{IMCV,(i)} & \leq \lim_{i \rightarrow \infty} \tilde{C}(\epsilon_i) \epsilon_i^{1/4}\\
        & = \lim_{i \rightarrow \infty} \tilde{C}(\epsilon_i)  \lim_{i \rightarrow \infty} \epsilon_i^{1/4}\\
        & = 6(2{w^{\star}}^2 - 1)2^{-1/4}C_2^{1/2}\lim_{i \rightarrow \infty} \epsilon_i^{1/4} \\
        & = 0.
    \end{align*}
    $\hfill\square$
\end{proof}

\subsection{Proof of Theorem \ref{thmconv}}\label{thmp3}
\begin{proof}

 For each adaptation step $i$ for $i=1,2,\ldots,\ell$, we define the within-batch estimator and without loss of generality we use the centralised version
 \begin{equation}\label{ewb}
     \mu_{B,i}=\frac{1}{B}\sum_{j=1}^B \varphi(X_{ij},Y_{ij}).
 \end{equation}
Note that $\E_{\pi,q}[\varphi(X,Y)] = 0$.
The estimator \eqref{batchest} is given by $$ \mu_{\ell,B,IMCV} = \frac{1}{\ell} \sum_{i = 1}^{\ell} \mu_{B,i}.$$
We view  the Markov chain as an extended 
B-variate chain on the state space
$\X^B := \underbrace{\X \times \X \times ... \times \X}_{\text{B}}$ with state $x^{1:B}\in \X^B$, so the chain is $\{ x_i^{1:B}, i=1,2,...\}$. Denote by $x^{(i)}$ the i-th dimensional component of the state vector $x^{1:B}$. Consider a sequence of adaptive proposal $\{ q_{\theta_i}\}_{i=1}^{\infty}$. Since $P_i$ is the transition kernel of $IM(P_i,\pi,q_{\theta_i})$, the transition kernel of new B-variate chain can be written as  $$\tilde{P}_i(x^{1:B},dx^{1:B}) = P_{i-1}(x^{(B)},dx^{(1)})P_i(x^{(1)},dx^{(2)})...P_i(x^{(B-1)},dx^{(B)}).$$
    \begin{enumerate}
\item  Under (A\ref{asu4}), Theorem 4.1 and Lemma 4.2 of \cite{brofos2022adaptation} guarantees that the sequence of independent Metropolis algorithms $\{IM(P_i,\pi,q_{\theta_i})\}_{i=1}^{\infty}$ equipped with the adaptation of Algorithm \ref{alg:adp} meets the ergodicity conditions stated by Theorem 1 of \citet{Roberts2007CouplingMCMC}. Thus, the chain is ergodic. Moreover,
the sequence of proposal densities $q_{\theta_i}$ converges to some $q$ in distribution and the transition kernels $\tilde{P}_i$ satisfy $\lim_{n\rightarrow \infty} \tilde{P}_i \rightarrow \tilde{\Pi}$ almost surely, where $\tilde{\Pi} = \underbrace{\Pi \times \Pi \times ... \times \Pi}_{\text{B}}$.
\item We follow the proof of Theorem 5 of \cite{Roberts2007CouplingMCMC} who provided a weak LLN under the assumption that the function $F$ in (\ref{Expectation}) is strictly bounded. We can relax this condition by exploiting the fact that our function $F$ is given by a batch mean of many samples. Denote by 
$\mathcal{F}_n$ the filtration generated by $\{ (\theta_i, x_i^{1:B})\}_{i=1}^n$ and a probability measure $P(\cdot)$ on the probability space $\{\Theta\times \X^B,\mathcal{F}_{\infty}, P\}$. From (a), for any $\epsilon>0$ and any $n$, we can choose $N = N(\epsilon)$ such that $\Vert \tilde{P}_{n}^N-\tilde{\Pi}\Vert \leq \epsilon$, where $\Vert \cdot \Vert$ is the usual total variation distance. Let $H_n = \{D_n \geq \epsilon/N^2 \}$ where $D_n = \mathop{Sup} \limits_{x \in \X} \Vert \tilde{P}_{n+1} - \tilde{P}_{n}\Vert$. Due to the diminishing condition, we can find a $n^{\star} = n^{\star}(\epsilon) \in \mathbb{N}$ such that $P(H_n)\leq \epsilon/N$, when $n\geq n^{\star}$. Define the event $E = \cap_{i=n+1}^{n+N}H_i^c$ to be  all of the converged adaptive parameters. \\

By the coupling arguments provided by \citet{Roberts2007CouplingMCMC}, for some $n\geq n^{\star}$ we first construct an adaptive chain $\{x^{1:B}_i\}_{i=n}^{n+N}$ and its proposed samples $\{y^{1:B}_i\}_{i=n}^{n+N}$ with parameter sequence $\{ \theta_i \}_{i=n}^{n+N}$.  Then, on event $E$ we construct a second chain $\{\tilde{x}_i^{1:B}\}_{i=n}^{n+N}$ and its proposed samples $\{\tilde{y}_i^{1:B}\}_{i=n}^{n+N}$ such that $(\tilde{x}^{1:B}_{n},\tilde{y}^{1:B}_{n}) = (x^{1:B}_{n},y^{1:B}_{n})$ and $\{(\tilde{x}_i^{1:B}, \tilde{y}_i^{1:B})\}_{i=n+1}^{n+N}$ are generated from 
$IM(\tilde{P}_{n},\pi,q_{\theta_{n}})$.  It is easy to show that $P((\tilde{x}_i^{1:B}, \tilde{y}_i^{1:B}) \neq (x_i^{1:B}, y_i^{1:B}) ,E) <\epsilon$ for all $n+1\leq i\leq n+N$ and $P(E^c) < \epsilon$. Then, from the law of total expectation,
\begin{align}\label{llnexp}
    \E \left[\frac{1}{N}  \Big\vert \sum_{i = n + 1}^{n + N} \mu_{B,i} \Big\vert \Bigg \vert \mathcal{F}_{n+N}\right]
    & \leq \E\left[\frac{1}{N}\Big\vert \sum_{i = n + 1}^{n + N} \mu_{B,i} \Big\vert\Bigg \vert (x_{n+1}^{1:B},y_{n+1}^{1:B}),\tilde{P}_{n+1}\right] \cdot 1 \nonumber\\
    &\hspace{-40mm}+\E\left[\frac{1}{N}\Big\vert \sum_{i = n + 1}^{n + N} \mu_{B,i} \Big\vert\Bigg \vert (\tilde{x}_i^{1:B}, \tilde{y}_i^{1:B}) \neq (x_i^{1:B}, y_i^{1:B}),E\right]\cdot P\left((\tilde{x}_i^{1:B}, \tilde{y}_i^{1:B}) \neq (x_i^{1:B}, y_i^{1:B}),E\right)\nonumber\\
    &\hspace{-40mm}+\E\left[\frac{1}{N}\Big\vert \sum_{i = n + 1}^{n + N} \mu_{B,i} \Big\vert \Bigg \vert E^c\right]\cdot P(E^c).
\end{align}

By the law of large numbers applied to the within batch estimator in \eqref{ewb}, 
$$
\lim_{B\rightarrow\infty}P(\vert \mu_{B,i}\vert <\epsilon) = 1.
$$
We can choose $B^{\star} = B^{\star}(\epsilon)$ large enough, such that for all $B \geq B^{\star}$, $\E\Big[\vert \mu_{B,i}\vert \Big \vert x^{1:B}_n, \tilde{P}_{n+1}\Big] < \epsilon$.
Thus, the first term of \eqref{llnexp} can be bounded by $\epsilon$.
Then, 
\begin{equation}\label{llnmiddle}
    \E\left[\frac{1}{N}\Big\vert \sum_{i = n + 1}^{n + N} \mu_{B,i} \Big\vert \Bigg\vert \mathcal{F}_{n+N}\right] \leq \epsilon + 2\epsilon^2 = \epsilon(1+2\epsilon).
\end{equation}
The rest of the proof follows that of  \cite{Roberts2007CouplingMCMC}.  We can choose a large $\ell$ such that $\max \{n/\ell, N/\ell \} \leq \epsilon$. Then, the summation from $1$ to $\ell$ can be separated into three parts (denote [$\cdot$] the integer-part function): the head part from $1$ to $n$, the tail part from $n+[(\ell - n)/N]N + 1$ to $\ell$ and the middle part contains $[(\ell - n)/N]$ intervals each of length $N$:
\begin{align*}
    \Bigg \vert \frac{1}{\ell}\sum_{i=1}^\ell\mu_{B,i} \Bigg\vert &\leq \Bigg\vert \frac{1}{\ell}\sum_{i=1}^n\mu_{B,i}\Bigg\vert  + \Bigg\vert \frac{1}{[(\ell - n)/N]}\sum_{j=1}^{[(\ell - n)/N]}\frac{1}{N}\sum_{k=1}^N\mu_{B,n+(j-1)N+k}\Bigg\vert \\
    &+ \Bigg\vert \frac{1}{\ell}\sum_{i=n+[(\ell - n)/N]N + 1}^\ell\mu_{B,i}\Bigg\vert.
\end{align*}
Clearly that both of the head part and tail part can be bounded by $\epsilon$, the middle part can be bounded by \eqref{llnmiddle}. Then, we have
$$
\E\left[\Big\vert \frac{1}{\ell}\sum_{i=1}^\ell\mu_{B,i}\Big\vert \right]\leq \epsilon + (1+2\epsilon)\epsilon + \epsilon = \epsilon(3+2\epsilon).
$$
By Markov's inequality
$$
P\left(\Big\vert\frac{1}{\ell} \sum_{i = 1} ^ \ell \mu_{B,i}\Big\vert > \epsilon^{1/2}\right) < o(\epsilon^{1/2})
$$
we obtain  
$$
\frac{1}{\ell}\sum_{i=1}^\ell \mu_{B,i} \rightarrow \E_{\pi,q}[\varphi(x,y)]
$$
in probability as $\ell \rightarrow \infty$ and $B \rightarrow \infty$.
\item  As $B \rightarrow \infty$, by Proposition \ref{prop1}, the CLT holds for all estimators within batches \eqref{ewb} since all conditions are satisfied in space $X^B$. Therefore, the summation over $\ell$ is equivalent to the summation over a series of random variables with normal densities. Thus, we have the CLT for estimator \eqref{batchest}.
\end{enumerate}
$\hfill\square$
\end{proof}

\section{Analytics for examples}

\subsection{Analytical calculations for Section \ref{sec:1d-n}}\label{ap:1dn}
We derive the analytical bounds for the toy examples in section \ref{sec:1d-n}. 
\begin{itemize}
\item 
The target density is $\pi(x) = \mathcal{N}(x|0,1)$ and the proposal density is $q(x) = \mathcal{N}(x|0,\sigma^2)$. Then,
$$
    w_x = \pi(x)/q(x) = \sigma \exp\left\{-\frac{1}{2}(1-\frac{1}{\sigma^2})x^2 \right\},
$$
$$
    \alpha(x,y) = \min\left(1,\frac{\pi(y)q(x)}{\pi(x)q(y)}\right) = \min(1, \exp\left\{-\frac{1}{2}(1-\frac{1}{\sigma^2})(y^2-x^2) \right\}).
$$
Thus, $w^{\star} = \sigma$ because the exponential term is strictly decreasing. For  \eqref{step1}, we have that
\begin{align*}
    Var_{\pi,q}(\varphi_0) = \E_{\pi,  q}[(\alpha(x,y)-1)^2(F(y)-F(x))^2] - (\E_{\pi,  q}[(\alpha(x,y)-1)(F(y)-F(x))])^2 .
\end{align*}
Since $F(x) = x$,  $\E_{\pi,  q}[\alpha(x,y)(F(y)-F(x))]=0$ and $\E_{\pi}[x] = \E_{q}[x] = 0 $, we have that $\E_{\pi,  q}[(\alpha(x,y)-1)(F(y)-F(x))] = 0$. Therefore,
\begin{align*}
    Var_{\pi,q}(\varphi_0) &= \E_{\pi,  q}[(\alpha(x,y)-1)^2(F(y)-F(x))^2]\\
    & =  \E_{\pi,  q}[(\alpha(x,y)-1)^2(y-x)^2]\\
    & = \int\left[\int_{x\leq y}(\alpha(x,y)-1)^2(y-x)^2\pi(x)dx\right]q(y)dy\\
    & = 2 \int_0^{\infty} \left[\int_{-y}^y \left(\exp\left\{-\frac{1}{2}(1-\frac{1}{\sigma^2})(y^2-x^2)\right\}-1\right)^2(y-x)^2\pi(x)dx\right]q(y)dy.
\end{align*}
Similarly, we have
$$
    Var_{q,q}(\varphi_0) = 2 \int_0^{\infty} \left[\int_{-y}^y (\exp\left\{-\frac{1}{2}(1-\frac{1}{\sigma^2})(y^2-x^2)\right\}-1)^2(y-x)^2 q(x)dx\right]q(y)dy.
$$
Finally from \eqref{firstbound}, $\sigma^2_{IMCV}$ is bounded as follows:
\begin{align}\label{1dnormalbound}
    \sigma^2_{IMCV} &\leq 2{w^{\star}}^2 Var_{q, q}(\varphi_0)-Var_{ \pi,q}(\varphi_0)\nonumber\\
    & = 2\sigma^2\E_{q,q}[(\alpha(x,y)-1)^2(y-x)^2]-\E_{\pi,q}[(\alpha(x,y)-1)^2(y-x)^2]\nonumber\\
    & = \frac{1}{\pi \sigma}\int_0^{\infty}\exp\left\{-\frac{1}{2\sigma^2}y^2\right\}\int_{-y}^{y}\left(\exp\left\{-\frac{1}{2}(1-\frac{1}{\sigma^2})(y^2-x^2)\right\}-1\right)^2\nonumber\\
    &\cdot (y^2-x^2)\left(2\sigma \exp\left\{-\frac{1}{2\sigma^2}x^2\right\}-\exp\left\{-\frac{1}{2}x^2\right\}\right)dxdy.
\end{align}
In section \ref{sec:1d-n}, we use numerical integration to estimate  \eqref{1dnormalbound} for a specific parameter $\sigma$.

\item 
The target density is $\pi(x) = \mathcal{N}(x|0,1)$ and the proposal density is a student-t distribution with $\nu$ degrees of freedom  $q(x) = t_{\nu}(x)$
with p.d.f given by 
$$
    t_{\nu}(x) = \frac{\Gamma(\frac{\nu+1}{2})}{\sqrt{\nu \pi}\Gamma(\frac{\nu}{2})}\left(1+\frac{x^2}{\nu}\right)^{-\frac{\nu+1}{2}}
$$
where $\Gamma(\cdot)$ denotes that gamma function. Then,
$$
    w_x = \pi(x)/q(x) = \sqrt{\frac{\nu}{2}}\frac{\Gamma(\frac{\nu}{2})}{\Gamma(\frac{\nu+1}{2})}\left(1+\frac{x^2}{\nu}\right)^{\frac{\nu+1}{2}}\exp\left\{-\frac{1}{2}x^2\right\}.
$$
To find $w^{\star}$, we  solve $dw_x/dx = 0$ and then check $d^2 w_x /dx^2 <0$, resulting to 
$$
    w^{\star} = w_{x=1} = \sqrt{\frac{\nu}{2}}\frac{\Gamma(\frac{\nu}{2})}{\Gamma(\frac{\nu+1}{2})}\left(1+\frac{1}{\nu}\right)^{\frac{\nu+1}{2}}\exp\left\{-\frac{1}{2}\right\}.
$$
Moreover,
$$
    \alpha(x,y) = \min\left(1, ~\left(\frac{\nu+y^2}{\nu+x^2}\right)^{\frac{\nu+1}{2}}\exp\left\{-\frac{1}{2}(y^2-x^2) \right\}\right).
$$
Finally, we can estimate the bound \eqref{firstbound} by Monte Carlo as follows.
\begin{align}\label{1dtbound}
    \sigma^2_{IMCV} &\leq 2{w^{\star}}^2 Var_{q, q}(\varphi_0)-Var_{ \pi,q}(\varphi_0)\nonumber\\
    & = 2{w^{\star}}^2\E_{q,q}[(\alpha(x,y)-1)^2(y-x)^2]-\E_{\pi,q}[(\alpha(x,y)-1)^2(y-x)^2]\nonumber\\
    & \approx 2{w^{\star}}^2 \frac{1}{N}\sum_{i=1}^N(\alpha(X'_i,Y_i)-1)^2(Y_i-X'_i)^2 -\frac{1}{N} \sum_{i=1}^N(\alpha(X_i,Y_i)-1)^2(Y_i-X_i)^2
\end{align}
where $\{X'_i\}_{i=1}^N$ and $\{Y_i\}_{i=1}^N$ are samples from $q$ and $\{X_i\}_{i=1}^N$ are samples from $\pi$.
\end{itemize}

\subsection{Integrals for the marginal likelihood estimation of Section \ref{sec:le}}

From \eqref{prior}, the prior is
$$
    \beta_m | g, m \sim \mathcal{N}(0,
    g(\boldsymbol{X}_m^\top \boldsymbol{X}_m)^{-1}),~~
    p(g) = (1+g)^{-2}, ~~g>0.
$$
    Assume $\beta_m$ is a $d$-dimensional vector. The marginal likelihood \eqref{Marginal likelihood} is
    \begin{align}\label{truele}
        &f(y | m)
        = \int f(y | m, \beta_m) 
        f(\beta_m | m) d \beta_m  \nonumber\\ 
        & = \int f(y | m, \beta_m) 
        \mathcal{N}(0,
        g(\boldsymbol{X}_m^\top \boldsymbol{X}_m)^{-1})p(g) d \beta_m dg \nonumber\\ 
        & = \int (2\pi \sigma^2)^{-\frac{N}{2}}\exp \left\{-\frac{1}{2\sigma^2}\sum_{i=1}^{N}(y_i-x_i^\top \beta_m)^2\right \}\mathcal{N}(0,
        g(\boldsymbol{X}_m^\top \boldsymbol{X}_m)^{-1})p(g)dgd\beta_m \nonumber \\
        & = C \int p(g) \int (2\pi g)^{-\frac{d}{2}} \Big \vert(\boldsymbol{X}_m^\top \boldsymbol{X}_m)^{-1}\Big \vert^{-\frac{1}{2}} \exp\left\{\frac{1}{\sigma^2}\sum_{i=1}^N y_i x_i^\top\beta_m - \frac{1}{2}(\frac{1}{\sigma^2}+\frac{1}{g})\beta^\top(\boldsymbol{X}_m^\top \boldsymbol{X}_m)\beta_m  \right\}d\beta_m dg \nonumber \\
        & = C \int (1+\frac{g}{\sigma^2})^{-\frac{d}{2}}\exp\left\{\frac{1}{2\sigma^2}\frac{g}{g+\sigma^2}\Big(\sum_{i=1}^Ny_ix_i^\top\Big)(\boldsymbol{X}_m^\top \boldsymbol{X}_m)^{-1} \Big(\sum_{i=1}^Ny_ix_i^\top\Big)^\top\right\}(1+g)^{-2}dg
    \end{align}
    where $C = (2\pi \sigma^2)^{-\frac{N}{2}} \exp\{-\frac{1}{2\sigma^2}\sum_{i=1}^N y_i^2\}$ is a constant. Notice that \eqref{truele} is a univariate integral so we use numerical integration to estimate it.
    Furthermore, in the static control variate term in \eqref{imcv}, we need to compute the analytical expectation under the proposal density $q$. In this example, the proposal $q$ is a discrete mixture of normals, see \eqref{mixture}. Therefore,
    \begin{align}
        &\E_{q(\beta_m) }[f(y | m, \beta_m) ]
        = \int f(y | m, \beta_m) 
        q(\beta_m) d \beta_m  \nonumber\\ 
        & = \sum_{i=1}^K w_i\int f(y | m, \beta_m) \mathcal{N}(0, g_i (\boldsymbol{X}_m^\top \boldsymbol{X}_m)^{-1}) d \beta_m  \nonumber\\ 
        & = \sum_{i=1}^K w_i\int (2\pi \sigma^2)^{-\frac{N}{2}}\exp \left\{-\frac{1}{2\sigma^2}\sum_{i=1}^{N}(y_i-x_i^\top \beta_m)^2 \right\} \mathcal{N}(0, g_i (\boldsymbol{X}^\top \boldsymbol{X})^{-1}) d \beta_m  \nonumber\\ 
        & = C \sum_{i=1}^K w_i\int (2\pi g_i)^{-\frac{d}{2}} \Big \vert(\boldsymbol{X}_m^\top \boldsymbol{X}_m)^{-1}\Big \vert^{-\frac{1}{2}} \exp\left\{\frac{1}{\sigma^2}\sum_{i=1}^N y_i x_i^\top\beta_m - \frac{1}{2}(\frac{1}{\sigma^2}+\frac{1}{g_i})\beta_m^\top(\boldsymbol{X}_m^\top \boldsymbol{X}_m)\beta_m  \right\}d\beta_m \nonumber \\
        & = C \sum_{i=1}^K w_i \Big(1+\frac{g_i}{\sigma^2}\Big)^{-\frac{d}{2}}\exp\left\{\frac{1}{2\sigma^2}\frac{g_i}{g_i+\sigma^2}\Big(\sum_{i=1}^Ny_ix_i\Big)^\top(\boldsymbol{X}_m^\top \boldsymbol{X}_m)^{-1} \Big(\sum_{i=1}^Ny_ix_i\Big)\right\}
        \label{eq:bayesianRegress_StaticInt}
    \end{align}
Moreover, to estimate the MCMC ratio, we need to calculate the pseudo-marginal prior by Monte Carlo with sample size $L$ (in our experiment set $L$ to 100) during each MCMC iteration,
\begin{align*}
    f(\beta_m | m) &= \int_0^\infty \mathcal{N}(0,
    g(\boldsymbol{X}_m^\top \boldsymbol{X}_m)^{-1})p(g)dg\\
    & \approx \frac{1}{L}\sum_{i=1}^L \mathcal{N}(0,
    g_i(\boldsymbol{X}_m^\top \boldsymbol{X}_m)^{-1}), ~~g_i \sim p(g).
\end{align*}

\paragraph{Computational cost.} The computational cost for CMC and MCMC estimators is as follows. The matrix multiplication $\boldsymbol{X}_m^\top \boldsymbol{X}_m$ costs $O(N d^2)$ while the Cholesky decomposition of this matrix costs $O(d^3)$. Both these operations are common to both estimators and are needed to be preformed once at the beginning. Then given these two precomputations, sampling from the Gaussian  
$\mathcal{N}(0,
g(\boldsymbol{X}_m^\top \boldsymbol{X}_m)^{-1})$
costs $O(d^2)$ while the likelihood evaluation $f(y | m, \beta_m)$  costs $O(N d)$ (i.e.\ for computing the quadratic form 
$\sum_{i=1}^{N}(y_i-x_i^\top \beta_m)^2$). This implies that the cost of the CMC estimator (given the precomputations of $\boldsymbol{X}_m^\top \boldsymbol{X}_m$) is $O(n N d + n d^2)$ where $n$ is the number of samples. This cost for large number of data $N$ is dominated by  $O(n N d)$.    

For the proposed MCMC estimator the evaluation of
the expectation \eqref{eq:bayesianRegress_StaticInt} in the static control variable has small cost 
$O(K + N d + d^2)$ where $O(N d + d^2)$ comes from computing efficiently the quadratic form $\Big(\sum_{i=1}^Ny_ix_i\Big)^\top(\boldsymbol{X}_m^\top \boldsymbol{X}_m)^{-1} \Big(\sum_{i=1}^Ny_ix_i\Big)
$ using the precomputed  
Cholesky decomposition of 
$\boldsymbol{X}_m^\top \boldsymbol{X}_m $. During MCMC, 
sampling from the $K$-component mixture Gaussian proposal costs
$O(K + d^2)$, while the evaluation
of the M-H probability costs 
$O((L + K) d^2 + N d)$ where 
$O((L + K) d^2)$ is due to the evaluation of the pseudo-marginal prior and the mixture proposal. Given that we perform $n$ MCMC iterations to obtain the estimator 
the overall cost is dominated by 
$O(n (L + K) d^2 + n N d)$ which for large number of examples $N$ will be similar to CMC.

\section{Adaptive IM with different dimensional Gaussian target and Gaussian proposal}\label{apd:nd}
In Section \ref{sec:d-n}, we illustrate the VRFs for the coordinate estimates of $d$-dimensional Gaussian target and Gaussian proposal.  Fig \ref{fig:allnormal} provides a more intuitive display. It can be seen that, under the same optimisation algorithm and hyperparameters, as $d$ increases  both the acceptance rate  and the VRF decrease. We analyse here this relationship with Proposition \ref{P2} under the following assumption:

\begin{assumption}[A7]\label{asu5}
    For a Gaussian target $\pi(x)=\mathcal{N}(x | 0, \bs{I}_d)$, the proposal after adaptation  can be expressed as 
    $q(x) = \mathcal{N}(x | \Gamma, \bs{U} (\bs{I}_d + \bs{\Delta}) \bs{U}^\top)$ where $\Gamma$ is a $d$-dimensional vector with elements $\gamma_i \sim \mathcal{N}(\gamma \vert 0, \sigma_{\gamma}^2)$ i.i.d., $\Delta$ is diagonal with elements 
    $\delta_i \sim \mathcal{N}(0,\sigma_\delta^2) \mathbb{I}_{\delta_i > -1}$ i.i.d. and $U$ is an orthogonal matrix.
 
\end{assumption}

\begin{proposition} \label{P2}
    Under (A\ref{asu5}), the expected KL divergence can be expressed by
    \begin{equation*}
        \E_{\Gamma,\bs{\Delta}}[\mathbb{KL}(q(x) \Vert \pi(x))]= \frac{1}{2} \left(\sigma_{\gamma}^2 + \frac{1}{2}\sigma_{\delta}^2+ o(\delta^2)\right) d.
    \end{equation*}
\end{proposition}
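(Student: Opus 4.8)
\textbf{Proof plan for Proposition \ref{P2}.}
The plan is to reduce the KL divergence to a sum of one‑dimensional contributions using the closed‑form expression for the KL divergence between two multivariate Gaussians, exploit the orthogonal invariance of trace and determinant to eliminate the matrix $\bs U$, and then take expectations term by term, treating the mean‑shift part exactly and the covariance part by a second‑order Taylor expansion.

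First I would recall that for $q(x)=\mathcal{N}(x\mid\Gamma,\bs\Sigma)$ with $\bs\Sigma=\bs U(\bs I_d+\bs\Delta)\bs U^\top$ and $\pi(x)=\mathcal{N}(x\mid 0,\bs I_d)$,
\begin{equation*}
\mathbb{KL}(q(x)\Vert\pi(x))=\tfrac12\left[\trace(\bs\Sigma)-d+\Gamma^\top\Gamma-\log\det(\bs\Sigma)\right].
\end{equation*}
Since $\bs U$ is orthogonal, $\trace(\bs\Sigma)=\trace(\bs I_d+\bs\Delta)=d+\sum_{i=1}^d\delta_i$ and $\log\det(\bs\Sigma)=\log\det(\bs I_d+\bs\Delta)=\sum_{i=1}^d\log(1+\delta_i)$, which is well defined because (A\ref{asu5}) enforces $\delta_i>-1$. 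Hence the orthogonal matrix drops out entirely and
\begin{equation*}
\mathbb{KL}(q(x)\Vert\pi(x))=\tfrac12\left[\sum_{i=1}^d\bigl(\delta_i-\log(1+\delta_i)\bigr)+\sum_{i=1}^d\gamma_i^2\right].
\end{equation*}

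Next I would take the expectation over $\Gamma$ and $\bs\Delta$. Using that the $\gamma_i$ are i.i.d.\ $\mathcal{N}(0,\sigma_\gamma^2)$ gives $\E[\sum_i\gamma_i^2]=d\sigma_\gamma^2$ exactly. For the covariance part, Taylor expanding $\log(1+\delta)=\delta-\tfrac12\delta^2+o(\delta^2)$ yields $\delta_i-\log(1+\delta_i)=\tfrac12\delta_i^2+o(\delta_i^2)$, so that $\E[\delta_i-\log(1+\delta_i)]=\tfrac12\E[\delta_i^2]+o(\E[\delta_i^2])=\tfrac12\sigma_\delta^2+o(\delta^2)$, where I use that the truncation $\{\delta_i>-1\}$ affects the moments of the centred Gaussian only to higher order in $\sigma_\delta$. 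Summing over the $d$ coordinates and combining with the $\Gamma$ term gives
\begin{equation*}
\E_{\Gamma,\bs\Delta}\bigl[\mathbb{KL}(q(x)\Vert\pi(x))\bigr]=\tfrac12\left(\sigma_\gamma^2+\tfrac12\sigma_\delta^2+o(\delta^2)\right)d,
\end{equation*}
which is the claimed identity.

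The only delicate point—the part I would expect to need the most care—is the control of the remainder in the Taylor expansion: the function $\delta\mapsto\delta-\log(1+\delta)$ blows up as $\delta\downarrow-1$, so one cannot simply bound the cubic remainder uniformly. I would handle this by splitting the expectation over $\{\delta_i>-1/2\}$, where a uniform cubic bound $|\delta-\log(1+\delta)-\tfrac12\delta^2|\le C|\delta|^3$ applies, and the low‑probability event $\{-1<\delta_i\le-1/2\}$, whose (Gaussian) probability is exponentially small in $\sigma_\delta^{-2}$ and therefore contributes only an $o(\sigma_\delta^2)$ term even after multiplying by the $O(\log)$‑type integrand; together with the analogous argument for the truncated second moment of $\delta_i$ this makes the $o(\delta^2)$ bookkeeping rigorous. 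Everything else is a direct substitution.
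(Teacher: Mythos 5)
Your proposal is correct and follows essentially the same route as the paper's proof: reduce to the closed-form Gaussian KL (the paper derives it in a few lines rather than quoting it), use orthogonality of $\bs{U}$ to obtain $\tfrac12\sum_i(\gamma_i^2+\delta_i-\log(1+\delta_i))$, Taylor expand $\log(1+\delta)$, and take expectations. Your extra care with the Taylor remainder near $\delta=-1$ and with the truncated second moment is in fact more rigorous than the paper's informal treatment of the $o(\delta^2)$ term, but it does not change the argument's structure.
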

\begin{proof}
Denote $\bs{\Sigma} = \bs{U \Delta U}^\top$.
    Since the target is $\pi(x)=\mathcal{N}(x | 0, \bs{I}_d)$ and the proposal is $q(x) = \mathcal{N}(x | \Gamma, \bs{I}_d+\bs{\Sigma})$, by denoting with  $tr\{\cdot\}$ the trace of a matrix, we have that
    \begin{align*}
        \mathbb{KL}(q(x)\Vert\pi(x))&=\E_q[\log(q)-\log(\pi)] \nonumber\\
        &=\frac{1}{2}\left[\log\frac{\vert \bs{I}_d\vert}{\vert \bs{I}_d+\bs{\Sigma}\vert}-\E_q\Big[(x-\Gamma)^\top(\bs{I}+\bs{\Sigma})^{-1}(x-\Gamma)\Big]+\E_q\Big[x^\top x\Big]  \right]\nonumber \\
        &=\frac{1}{2}\left[-\log\vert \bs{I}_d+\bs{\Sigma}\vert-tr\left\{\E_q\Big[(x-\Gamma)(x-\Gamma)^\top\Big](\bs{I}_d+\bs{\Sigma})^{-1}\right\}+\Gamma^\top\Gamma+tr\{\bs{I}_d+\bs{\Sigma}\}\right]\nonumber \\
        &=\frac{1}{2}\left[ - \log\vert \bs{I}_d+\bs{\Sigma}\vert -tr\{(\bs{I}_d+\bs{\Sigma})(\bs{I}_d+\bs{\Sigma})^{-1}\}+\Gamma^\top\Gamma+tr\{\bs{\Sigma}\} + d\right]\nonumber \\
        &=\frac{1}{2}\left(-\log\vert \bs{I}_d+\bs{\Sigma}\vert -d+ \Gamma^\top\Gamma +tr\{\bs{\Sigma}\} +d \right)\nonumber\\
        &=\frac{1}{2}\left(\Gamma^\top\Gamma+tr\{\bs{U} \bs{\Delta} \bs{U}^\top\}-log\vert \bs{U} (\bs{I}_d + \bs{\Delta}) \bs{U}^\top \vert\right)\nonumber \\
        & = \frac{1}{2}\left(\sum_{i=1}^d \gamma_i^2 + \sum_{i=1}^d \delta_i - \log \prod_{i=1}^d(1+\delta_i)\right)\nonumber\\
        &=\frac{1}{2}\sum_{i=1}^d\left( \gamma_i^2 + \delta_i - \log(1+\delta_i)\right)\nonumber\\
        &= \frac{1}{2}\sum_{i=1}^d\left(\gamma_i^2 +  \delta_i - (\delta_i-\frac{1}{2}\delta_i^2 + o(\delta^2))\right) ~\text{(by using Taylor expansion)}\nonumber\\
        &=\frac{1}{2}\sum_{i=1}^d \left(\gamma_i^2 +\frac{1}{2}\delta_i^2 + o(\delta^2)\right).
    \end{align*}
    Therefore,
    \begin{align*}
        \E_{\Gamma,\bs{\Delta}}[\mathbb{KL}(q(x) \Vert \pi(x))]&= \frac{1}{2}\E_{\Gamma,\bs{\Delta}}\left[\sum_{i=1}^d \left(\gamma_i^2 +\frac{1}{2}\delta_i^2\right) \right]\nonumber\\
        &=\frac{1}{2}\sum_{i=1}^d\E_{\Gamma,\bs{\Delta}}\left[\gamma_i^2 +\frac{1}{2}\delta_i^2 \right]\nonumber\\
        &=\frac{1}{2} \left(\sigma_{\gamma}^2 + \frac{1}{2}\sigma_{\delta}^2+ o(\delta^2)\right) d.
    \end{align*}
    $\hfill\square$
\end{proof}
\begin{figure}
\centering
\includegraphics[width=0.9\textwidth]{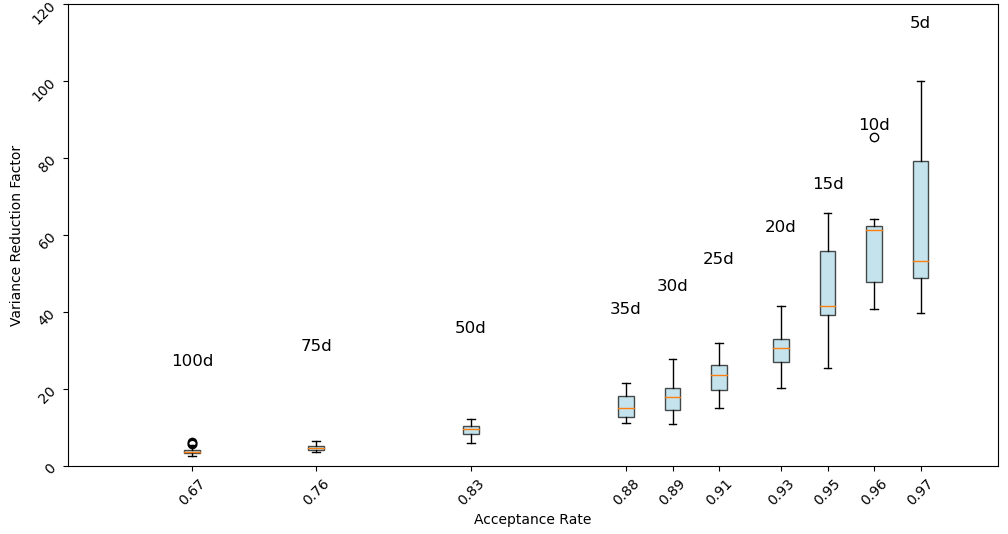}
\caption{\label{fig:allnormal} Boxplot of VRFs for the coordinate estimates of different dimensional Gaussian target and Gaussian proposal}
\end{figure}
In the Proof of Theorem \ref{thmbound} we have derived that the asymptotic variance $\sigma_{IMCV}^2$ can be bounded by $C^{\star}\mathbb{KL}(q(x) \Vert \pi(x))^{1/4}$.  Thus, since $\sigma_F^2 = 1$,
\begin{align*}
    VRF = \frac{\sigma_F^2}{\sigma_{IMCV}^2}\gtrsim {C^{\star}}^{-1} \left[ \frac{1}{2} \left(\sigma_{\gamma}^2 + \frac{1}{2}\sigma_{\delta}^2\right) d\right]^{-1/4}.
\end{align*}
Therefore, with the increase of the dimensions, the decrease rate of the variance reduction factor is no faster than $O(d^{-1/4})$ asymptotically. Moreover, increased precision in adaptation correlates with the amplification in the VRF.

\section{Adaptive IM algorithm with KL divergence instances}\label{sec:adp-algo}
In all three experiments of Section \ref{sec:adaptiveNumerical} we use the adaptation algorithm combined with doubly stochastic variational inference (DSVI) \citep{Titsias2014DoublyInference} or sticking the landing algorithm \citep{Roeder17} with Adam optimisation \citep{kingma2014adam}. The specific algorithm is described here in Algorithm \ref{alg:DSV} and Algorithm \ref{alg:sl}.

\begin{algorithm}
\caption{Adaptive IM with DSVI and Adam}\label{alg:DSV}
\textbf{Inputs}: Gaussian proposal parameters $\theta = (\mu, \bs{L})$, log-pdf of target distribution $\log \pi(x)$, stepsizes for two parameters ($\alpha, \beta$), batch size $B$, the number of batches $\ell$, objective function $F$.
\\
\textbf{Initialisation} Set $i \leftarrow 0, j \leftarrow 0$ and initialise \{$X$, $\theta = (\mu, \bs{L})$, $\alpha$, $\beta$\} by \{$X_0$, $q_{\theta_0}=(\mu_0, \bs{L}_0)$, $\alpha_0$, $\beta_0$\}.
\begin{algorithmic}[1]
    \While{$i < \ell$}
        \State Generate diagonal matrix: $\bs{\Delta}_{\bs{L}_i} \leftarrow \text{diag}(\text{diag}(\bs{L}_i))$.
        \While{$j < B$}
        \State Sample $Z_{ij}$ from $\mathcal{N}(z \vert 0, \bs{I}_d)$.
        \State Compute $Y_{ij} \leftarrow \mu_i + L_i Z_i$.
        \State Save $\{\alpha(X_{ij},Y_{ij}), X_{ij}, Y_{ij}\}$ from independent Metropolis $IM(P_i,\pi,q_{\theta_i})$.
        \State Update function 
        $$
        \begin{cases}
            H_{\mu}(Y_{ij})\leftarrow \nabla \log \pi(Y_{ij}),\\
            H_{L}(Y_{ij},Z_{ij})\leftarrow \nabla \log \pi(Y_i) Z_i^\top + \bs{\Delta}_{\bs{L}_i}.
        \end{cases}
        $$
        \State Set $j \leftarrow j + 1$.
        \EndWhile
        \State Adaptation update 
        $$
        \begin{cases}
            \mu_{i+1} \leftarrow \mu_i - \alpha_i Adam( \sum_{j=1}^{B}H_{\mu}(Y_{ij})/B),\\
            L_{i+1} \leftarrow L_i - \beta_i Adam( \sum_{j=1}^{B}H_{L}(Y_{ij},Z_{ij})/B).
        \end{cases}
        $$
        \State Calculate analytical result for the batch $E_{q_{\theta_i}}[F]$.
        \State Update $i \leftarrow i + 1, j \leftarrow 0$.
        \EndWhile
    \item Calculate the estimator \eqref{batchest}.
\end{algorithmic}
\textbf{Returns}: An estimate for \eqref{batchest}.
\end{algorithm}

\begin{algorithm}
\caption{Adaptive IM with sticking the landing and Adam}\label{alg:sl}
\textbf{Inputs}: Gaussian proposal parameters $\theta = (\mu, \bs{L})$, log-pdf of target distribution $\log \pi(x)$, stepsizes for two parameters ($\alpha, \beta$), batch size $B$, the number of batches $\ell$, objective function $F$.
\\
\textbf{Initialisation} Set $i \leftarrow 0, j \leftarrow 0$ and initialise \{$X$, $\theta = (\mu, \bs{L})$, $\alpha$, $\beta$\} by \{$X_0$, $q_{\theta_0}=(\mu_0, \bs{L}_0)$, $\alpha_0$, $\beta_0$\}.
\begin{algorithmic}[1]
    \While{$i < \ell$}
        \While{$j < B$}
        \State Sample $Z_{ij}$ from $\mathcal{N}(z \vert 0, \bs{I}_d)$.
        \State Compute $Y_{ij} \leftarrow \mu_i + L_i Z_i$.
        \State Save $\{\alpha(X_{ij},Y_{ij}), X_{ij}, Y_{ij}\}$ from independent Metropolis $IM(P_i,\pi,q_{\theta_i})$.
        \State Update function 
        $$
        \begin{cases}
            H_{\mu}(Y_{ij})\leftarrow \nabla \log \pi(Y_{ij}),\\
            H_{L}(Y_{ij},Z_{ij})\leftarrow (\nabla \log \pi(Y_i) - \nabla \log q_{\theta_i}(Y_i) )Z_i^\top.
        \end{cases}
        $$
        \State Set $j \leftarrow j + 1$.
        \EndWhile
        \State Adaptation update 
        $$
        \begin{cases}
            \mu_{i+1} \leftarrow \mu_i - \alpha_i Adam( \sum_{j=1}^{B}H_{\mu}(Y_{ij})/B),\\
            L_{i+1} \leftarrow L_i - \beta_i Adam( \sum_{j=1}^{B}H_{L}(Y_{ij},Z_{ij})/B).
        \end{cases}
        $$
        \State Calculate analytical result for the batch $E_{q_{\theta_i}}[F]$.
        \State Update $i \leftarrow i + 1, j \leftarrow 0$.
        \EndWhile
    \item Calculate the estimator \eqref{batchest}.
\end{algorithmic}
\textbf{Returns}: An estimate for \eqref{batchest}.
\end{algorithm}

\newpage
\section{Further variance reduction with optimal coefficients}\label{sec:coef}
Consider estimator \eqref{imcv}.  Naturally, the correlation coefficient between the function $F$ and its control variate cannot be one, so  we can add some coefficients $c_1,c_2$ in front of these control variate terms to further reduce the estimator variance:
$$
    \mu_{n,IMCV}^{(c_1,c_2)} = \frac{1}{n} \sum_{i = 1}^{n} \left\{ F(X_i) +  c_1 \left\{ \alpha(X_i,Y_i)(F(Y_i) - F(X_i))   
- c_2 ( F(Y_i) - \E_q [F] ) \right\} \right\}
$$
where $c_2$ is used to reduce the variance of the Poisson control variate term wand $c_1$ is used to reduce the variance of the estimator $F$. The optimal coefficient $c_2$ is given by
$$
    c^{\star}_2 := \frac{Cov[\alpha(x,y)(F(y)-F(x)),(F(y)-\E_{q}[F])]}{Var[F(y)-\E_{q}[F]]}
$$
estimated by
\begin{equation}\label{c2}    
    \hat{c}_2= \frac{\sum_{i=1}^n\alpha(X_i,Y_i)(F(Y_i)-F(X_i))(F(Y_i) - \E_q [F])}{\sum_{i=1}^n(F(Y_i) - \E_q [F])^2}
\end{equation}
see, for example, \cite{owen2013monte}. Once we construct the control variate $PF(x,y) = F(x)+\alpha(x,y)(F(y)-F(x))- {\hat c}_2 ( F(y) - \E_q [F] ) $ with coefficient $\hat{c}_2$ given by \eqref{c2}, the optimal $c_1$ minimises the asymptotic variance $\sigma^2_{IM}$ in \eqref{sigmaIM} based on the reversible property of $IM(P,\pi,q)$, 
see Theorem 3 of \cite{dellaportas2012control}, and is given by 
$$
    c^{\star}_1 := \frac{\E_{\pi}[(F-\E_{\pi}[F])(G+PF)]}{\E_{\pi}[F^2-(PF)^2]}
$$
estimated by
\begin{equation}\label{c1}
    \hat{c}_1=\frac{\sum_{i=1}^{n}\{F(X_i)(F(X_i)+PF(X_i,Y_i))\}-n^{-1}\sum_{i=1}^{n}F(X_i)\sum_{i=1}^{n}(F(X_i)+PF(X_i,Y_i))}{\sum_{i=2}^{n}(F(X_i)-PF(X_{i-1},Y_{i-1}))^2}.
\end{equation}
Thus, we obtain the estimator with coefficients \eqref{c1} and \eqref{c2}
\begin{equation}\label{imcv_coef}
\mu_{n,IMCV}^{(\hat{c}_1,\hat{c}_2)} = \frac{1}{n} \sum_{i = 1}^{n} \left\{ F(X_i) + {\hat c}_1 \left\{ \alpha(X_i,Y_i)(F(Y_i) - F(X_i))   
- {\hat c}_2 ( F(Y_i) - \E_q [F] ) \right\} \right\}.
\end{equation}

Also, for the estimator of IM with coupling \eqref{couplingIM}, we can add a coefficient $\hat{c}$ in front of the control variate to minimise the asymptotic variance. The way to estimate $\hat{c}$ is similar to \eqref{c1} by letting $PF(X_i,Y_i) = F(X_i) - (F(Y_{i-1})-\E_q[F])$. The IM estimator with coupling becomes
\begin{equation}\label{est:cimcoeff}
    \mu_{n,CIM}^{(\hat{c})} = \frac{1}{n} \sum_{i = 1}^{n} \left\{ F(X_i)-\hat{c}(F(Y_{i-1}) - \E_q [F] ) \right\}
\end{equation}

Moreover, the coefficients for estimator \eqref{approximcv} can be obtained by replacing $F(y)-\E_q[F]$ by $\tilde{F}(y)-\E_q[\tilde{F}]$ in \eqref{c1}, \eqref{c2}.

Similarly, for the batch estimator \eqref{batchest}, we can also reduce the variance by adding coefficient for each batch:
$$
\scalebox{0.90}{$
    \mu_{\ell,B,IMCV}^{(\hat{c}_1,\hat{c}_2)} = \frac{1}{\ell}\sum_{i = 1}^{\ell}\left\{ \frac{1}{B} \sum_{j = 1}^{B} \left\{F(X_{ij}) + \hat{c}_{1i} \left[\alpha(X_{ij}, Y_{ij})(F(X_{ij}) -  F(Y_{ij})) - \hat{c}_{2i}
    (F(Y_{ij}) - \E_{q_{\theta_{i}}}[F(y)])\right]  \right\}\right\}
    $}
$$
with
$$
    \hat{c}_{1i}=\frac{\sum_{j=1}^B\{F(X_{ij})(F(X_{ij})+PF(X_{ij}, Y_{ij}))\}-\frac{1}{B}\sum_{j=1}^B\{F(X_{ij})\}\sum_{j=1}^B\{F(X_{ij})+PF(X_{ij}, Y_{ij})\}}{\sum_{j=2}^{B}\{F(X_{ij})-PF(x_{i(j-1)})\}^2} 
$$
and
$$
    \hat{c}_{2i}= \frac{\sum_{j=1}^B\alpha(X_{ij},Y_{ij})(F(Y_{ij})-F(X_{ij}))(F(Y_{ij}) - \E_{q_{\theta_{i}}} [F])}{\sum_{j=1}^B(F(Y_{ij}) - \E_{q_{\theta_{i}}} [F])^2}.
$$
The convergence of the estimator of the coefficients is described in the following Theorem.
\begin{theorem}
    \begin{enumerate}
        \item Under (A\ref{asu1}),(A\ref{asu2}) and (A\ref{asu3}):  $
        \lim_{n \rightarrow \infty} ({\hat c_1}, {\hat c_2)} \rightarrow (c^{\star}_1, c^{\star}_2) \text{   } a.s.,
        $
        where  $c^{\star}_1, c^{\star}_2 = \arg \min_{({\hat c_1}, {\hat c_2})} [\sigma^2_{IMCV}]$.
        
        \item Under (A\ref{asu1}), (A\ref{asu4}), (A\ref{asu21}) and (A\ref{asu31}): $
        \lim_{\ell \rightarrow \infty, B \rightarrow \infty} ({\hat c_{1\ell}}, {\hat c_{2\ell})} \rightarrow (c^{\star}_1, c^{\star}_2) \text{   } a.s.
        $
        where  $c^{\star}_1, c^{\star}_2 = \lim_{\ell \rightarrow \infty} \arg \min_{(\hat c_{1\ell}, \hat c_{2\ell})} [\sigma^2_{IMCV,(\ell)}]$.
    \end{enumerate}
\end{theorem}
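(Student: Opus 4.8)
The plan is to regard each of $\hat c_1$, $\hat c_2$ and their per-batch analogues as a ratio of ergodic averages, invoke the strong law of large numbers supplied by Proposition~\ref{prop1} for $IM(P,\pi,q)$ (and the batch law of large numbers behind Theorem~\ref{thmconv} in the adaptive case), and finish with the continuous mapping theorem once the limiting denominators are shown to be strictly positive. The only genuinely new feature is that $\hat c_1$ is a plug-in estimator that uses $\hat c_2$ inside $PF$, so the argument proceeds in two stages.

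For $\hat c_2=N_n/D_n$, with $N_n$ and $D_n$ the numerator and denominator of \eqref{c2} each divided by $n$, assumption (A\ref{asu2}) gives $F\in L_4^{\pi,q}$, so Cauchy--Schwarz places the summands in $L_1^{\pi,q}$ (indeed in $L_2$), while $\alpha\le 1$ is harmless; since $(X_i,Y_i)$ is a functional of the ergodic chain of Proposition~\ref{prop1} with stationary law $\pi(x)q(y)$, the strong law yields $N_n\to\E_{\pi,q}[\alpha(x,y)(F(y)-F(x))(F(y)-\E_q[F])]$ and $D_n\to Var_q[F]$ a.s. Because $\E_{\pi,q}[F(y)-\E_q[F]]=0$, the first limit coincides with $Cov[\alpha(x,y)(F(y)-F(x)),F(y)-\E_q[F]]$, hence $\hat c_2\to c^\star_2$ a.s.\ whenever $Var_q[F]>0$ (otherwise the control variate is vacuous and there is nothing to prove). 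Next, on a sample path where $\hat c_2\to c^\star_2$, write $PF^\star$ for the expression $PF$ with $c^\star_2$ in place of $\hat c_2$; since $(F(Y_i)-\E_q[F])$ has a finite second moment, replacing $\hat c_2$ by $c^\star_2$ perturbs the Ces\`aro sums in \eqref{c1} by a quantity vanishing as $\hat c_2-c^\star_2\to0$, so it is enough to treat \eqref{c1} with $PF^\star$. Dividing numerator and denominator by $n$ turns both into ergodic averages, with limits $\E_\pi[(F-\E_\pi[F])(F+PF^\star)]$ and $\E_\pi[(F(X_1)-PF^\star(X_0,Y_0))^2]$; expanding the square and using $\E[F(X_1)\mid X_0,Y_0]=F(X_0)+\alpha(X_0,Y_0)(F(Y_0)-F(X_0))$, independence of $Y_0$ and $X_0$, the defining property of $c^\star_2$, and reversibility of $IM(P,\pi,q)$ (the bookkeeping of Theorem~3 of \cite{dellaportas2012control}) collapses the cross term, so the denominator limit equals $\E_\pi[F^2-(PF^\star)^2]$ and therefore $\hat c_1\to c^\star_1$ a.s.

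For the batch estimators one works under (A\ref{asu4}), (A\ref{asu21}) and (A\ref{asu31}). For fixed $i$ and $B\to\infty$, each per-batch coefficient is an ergodic average in the product space $\X^B$, to which Proposition~\ref{prop1} applies, converging to the stationary expectation under $q_{\theta_i}$; averaging these over the $\ell$ batches and combining the batch law of large numbers from the proof of Theorem~\ref{thmconv} with (A\ref{asu4}) --- so that $q_{\theta_i}\to q_{\theta^\star}$ and, by the two stages above applied at $q_{\theta^\star}$, the optimal coefficients of $IM(P_i,\pi,q_{\theta_i})$ converge to the same pair $(c^\star_1,c^\star_2)$ --- then passing the limit through the ratios, whose denominators remain bounded away from $0$ by (A\ref{asu31}), gives $(\hat c_{1\ell},\hat c_{2\ell})\to(c^\star_1,c^\star_2)$.

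I expect the main obstacle to be twofold. First, the plug-in step: one must show that substituting $c^\star_2$ for $\hat c_2$ inside the ergodic averages of \eqref{c1} is negligible almost surely, which requires control of the $(F(Y_i)-\E_q[F])$ factors along the whole sample path rather than a mere in-probability bound. Second, in the adaptive setting Theorem~\ref{thmconv} only delivers a \emph{weak} law, whereas part~2 asserts almost-sure convergence of the coefficients; recovering this will likely require exploiting the within-batch averaging (so that each length-$B$ batch mean converges a.s.\ by the non-adaptive Proposition~\ref{prop1}) together with a Borel--Cantelli or martingale argument across batches, rather than the coupling construction of \cite{Roberts2007CouplingMCMC} that underlies the weak law.
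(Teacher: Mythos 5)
Your proposal is correct and follows essentially the same route as the paper: the paper's own (very terse) proof treats $\hat c_2$ as an immediate linear-regression/ergodic-average limit, invokes Theorem 3 of \cite{dellaportas2012control} for $\hat c_1$, and in the batch case regards each numerator and denominator term as a function on $\X^B$ and applies the law of large numbers --- exactly the skeleton you flesh out with the strong law from Proposition \ref{prop1}, the continuous mapping theorem, and the plug-in step replacing $\hat c_2$ by $c^{\star}_2$. Your closing caveat about the weak-versus-strong law tension in the adaptive case is a fair point that the paper's one-line argument glosses over, so your more careful treatment subsumes rather than contradicts it.
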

\begin{proof}
    \begin{enumerate}
        \item Theorem 3 of \citet{dellaportas2012control} provides the proof for the convergence of  $\hat{c}_1$. The convergence of $\hat{c}_2$ is immediate since it is a linear regression coefficient.
        \item In $\hat c_{1\ell}$ and $\hat c_{2\ell}$,
    each term in the numerator and the denominator can be regarded as a function on the state space $\X^B$. 
    From the law of large numbers, all terms converge to their true mean and thus $\hat{c}_{1\ell}$ and  $\hat{c}_{2\ell}$ converge. 
    \end{enumerate}
    $\hfill\square$
\end{proof}

\end{appendix}
\end{document}